\newtheorem{Thm}{Theorem}[subsection]
\newtheorem{Lem}[Thm]{Lemma}
\newtheorem{Prop}[Thm]{Proposition}
\newtheorem{Eg}[Thm]{Example}
\newtheorem{Rem}[Thm]{Remark}
\newtheorem{Def}[Thm]{Definition}
\newtheorem*{Def*}{Definition}
\newtheorem*{Thm*}{Theorem}
\newcommand{\Z}{\mathbb{Z}}
\newcommand{\N}{\mathbb{N}}
\newcommand{\Q}{\mathbb{Q}}
\newcommand{\C}{\mathbb{C}}
\newcommand{\cf}{{\em cf.}\ }
\newcommand{\resp}{{resp.}\ }
\renewcommand{\hat}[1]{\widehat{#1}}
\renewcommand{\tilde}[1]{\widetilde{#1}}
\newcommand{\opname}[1]{\operatorname{\mathsf{#1}}}
\renewcommand{\mod}{\opname{mod}}
\newcommand{\Rep}{\opname{Rep}}
\newcommand{\pr}{\opname{pr}}
\newcommand{\ind}{\opname{ind}}
\newcommand{\proj}{\opname{proj}}
\newcommand{\inj}{\opname{inj}}
\newcommand{\Inj}{\opname{Inj}}
\newcommand{\op}{^{op}}
\newcommand{\ra}{\rightarrow}
\newcommand{\xra}{\xrightarrow}
\newcommand{\Ker}{\opname{Ker}}
\newcommand{\Cok}{\opname{Cok}}
\newcommand{\Hom}{\opname{Hom}}
\newcommand{\Ext}{\opname{Ext}}
\renewcommand{\deg}{\opname{deg}}
\newcommand{\Hf}{{\frac{1}{2}}}
\newcommand{\xla}{\xleftarrow}
\newcommand{\Rm}[1]{{\longmapsto}}
\newcommand{\Lm}[1]{{\longmapsfrom}}
\newcommand{\cA}{{\mathcal A}}
\newcommand{\cD}{{\mathcal D}}
\newcommand{\cF}{{\mathcal F}}
\newcommand{\cI}{{\mathcal I}}
\newcommand{\cL}{{\mathcal L}}
\newcommand{\cM}{{\mathcal M}}
\newcommand{\cN}{{\mathcal N}}
\newcommand{\cR}{{\mathcal R}}
\newcommand{\cS}{{\mathcal S}}
\newcommand{\cW}{{\mathcal W}}
\newcommand{\cX}{{\mathcal X}}
\newcommand{\bI}{{\mathbf I}}
\newcommand{\bL}{{\mathbf L}}
\newcommand{\ua}{{\underline{a}}}
\newcommand{\ui}{{\underline{i}}}
\newcommand{\tB}{{\widetilde{B}}}
\newcommand{\can}{L}
\newcommand{\redWSet}{{\mathcal{J}}}
\newcommand{\tBase}{{\Z[t^\pm]}}
\newcommand{\projQuot}{{\cM}}
\newcommand{\affQuot}{{\projQuot_0}}
\newcommand{\grProjQuot}{{\projQuot^\bullet}}
\newcommand{\grAffQuot}{{\affQuot^\bullet}}
\newcommand{\cycProjQuot}{{\projQuot^\epsilon}}
\newcommand{\cycAffQuot}{{\affQuot^\epsilon}}
\newcommand{\grRegStratum}{{\grAffQuot^\mathrm{reg}}}
\newcommand{\cycRegStratum}{{\cycAffQuot^\mathrm{reg}}}
\newcommand{\qClAlg}{{\cA^q}}
\newcommand{\oh}{{\overline{h}}}
\newcommand{\diag}{{\delta}}
\newcommand{\RHS}{{\mathrm{RHS}}} 
\newcommand{\LHS}{{\mathrm{LHS}}}
\newcommand{\tRes}{{\tilde{\mathrm{Res}}}}
\newcommand{\res}{{\mathrm{Res}}}
\newcommand{\trunc}{{^{\leq 0}}}
\newcommand{\tw}{{\tilde{w}}}
\newcommand{\tOtimes}{\tilde{\otimes}}
\newcommand{\mfr}[1]{{\mathfrak{#1}}}
\newcommand{\mbf}[1]{{\mathbf{#1}}}
\newcommand{\msf}[1]{{\mathsf{#1}}}
\newcommand{\braket}[1]{\left\langle#1\right\rangle}
\newcommand{\set}[1]{\left\{#1\right\}}
\newcommand{\Uq}{\mbf{U}_{q}}
\newcommand{\Ut}{\mbf{U}_{t}}
\newcommand{\tUt}{\tilde{\mbf{U}}_{t}}
\newcommand{\tKappa}{\tilde{\kappa}}
\tikzstyle{none}=[inner sep=0pt]
\tikzstyle{black box}=[draw=black, fill=black!25]
\tikzstyle{white box}=[draw=black, fill=white]
\tikzstyle{black circle}=[circle,draw=black!50, fill=black!25]
\tikzstyle{red circle}=[circle,draw=red!50, fill=red!25]
\tikzstyle{blue circle}=[circle,draw=blue!50, fill=blue!25]
\tikzstyle{green circle}=[circle,draw=green!50, fill=green!25]
\tikzstyle{yellow circle}=[circle,draw=yellow!50, fill=yellow!25]
\begin{document}
\title[]{Quantum groups via cyclic quiver varieties I}
\author{Fan QIN}

\keywords{Quantum groups, quiver varieties, categorification,
  dual canonical basis}

\begin{abstract}
We construct the
quantized enveloping algebra of any simple Lie algebra of type $\mathbb{A}$$\mathbb{D}$$\mathbb{E}$ as the quotient of a Grothendieck ring arising from certain cyclic quiver varieties. In
particular, the dual canonical basis of a one-half quantum group with respect to
Lusztig's bilinear form is contained in the natural basis of the
Grothendieck ring up to rescaling.

This paper expands the categorification established by
Hernandez and Leclerc to the whole quantum groups. It can be viewed as a geometric counterpart of Bridgeland's recent
work for type $\mathbb{A}$$\mathbb{D}$$\mathbb{E}$.


\end{abstract}
\maketitle

\tableofcontents

\renewcommand{\qClAlg}{{\cA}}
\renewcommand{\diag}{{d}}

\newcommand{\redISet}{\cI}
\newcommand{\qNilAlg}{\cN}
\newcommand{\norm}{\alpha}
\newcommand{\sym}{\mathcal{S}}
\newcommand{\mHf}{{-\Hf}}
\newcommand{\height}{{\opname{ht}}}

\renewcommand{\inj}{{\bI}}
\renewcommand{\can}{{\bL}}
\newcommand{\Ind}{{\opname{Ind}}}

\newcommand{\wtMap}{{\opname{wt}}}
\newcommand{\gMap}{{\opname{wt}^{-1}}}

\renewcommand{\redWSet}{{\cW}}
\newcommand{\degL}{\mathrm{D}}
\newcommand{\domDegL}{\mathrm{D}^\dagger}

\newcommand{\condL}{\mathrm{L}}

\newcommand{\roots}{{\mbox{\boldmath $r$}}}
\newcommand{\bpi}{{\mbox{\boldmath $\pi$}}}

\newcommand{\hfg}{\hat{\mathfrak{g}}}
\renewcommand{\trunc}{^{\leq 2l}}


\section{Introduction}
\label{sec:intro}

\subsection{History}
\label{sec:background}

For any given symmetric
Cartan datum, let $\mfr{g}$ be the associated Kac-Moody Lie algebra and $\Ut(\mfr{g})$ the
corresponding quantized enveloping algebra. There have been several
different approaches to the categorical realizations of $\Ut(\mfr{g})$. 

\subsubsection*{One-half quantum group}
\label{sec:one_half}

The earliest and best developed theories are categorifications of
a one-half quantum group. Notice that $\Ut(\mfr{g})$ has the triangular decomposition
$\Ut(\mfr{g})=\Ut(\mfr{n}^+)\otimes \Ut(\mfr{h})\otimes
\Ut(\mfr{n}^-)$. Let $Q$ denote a quiver associated with $\mfr{g}$
which has no oriented cycles. For any field $k$, let $k Q$ denote the
path algebra associated with $Q$.

1) In 1990, Ringel showed in \cite{Ringel90} that the positive (resp. negative) one-half
quantum group $\Ut(\mfr{n}^+)$ (resp. $\Ut(\mfr{n}^-)$) can be realized as a subalgebra of
the\emph{ Hall algebra} of the abelian category $\mathbb{F}_q Q-\mod$, where $\mathbb{F}_q$
is any finite field and $\mathbb{F}_q Q-\mod$ the category of the left
modules of the path algebra $\mathbb{F}_q Q$. Let us call this Hall algebra approach an \emph{additive categorification} of
$\Ut(\mfr{n}^+)$, because the product of any Chevalley generator with
itself is translated into the direct sum of a simple $\mathbb{F}_q Q$-module
with itself.

2) Lusztig has given a geometric construction of
$\Ut(\mfr{n}^+)$, \cf \cite{Lusztig90} \cite{Lusztig91}, by considering the
Grothendieck ring arising from certain perverse sheaves over the
varieties of $\C Q$-modules. This geometric approach is very
powerful. In particular, the perverse sheaves provides us a positive basis\footnote{By
a positive basis, we mean a basis whose structure constants are non-negative.} of
$\Ut(\mfr{n}^+)$, which is called \emph{the canonical basis}, \cf also
\cite{Kas:crystal} for the \emph{crystal basis}. We can view Lusztig's construction as a
\emph{monoidal categorification} of $\Ut(\mfr{n}^+)$, because the addition
and the multiplication in $\Ut(\mfr{n}^+)$ are translated into the direct
sum and the derived tensor of perverse sheaves.

We recommend the reader to the survey papers of Schiffmann \cite{Schiffmann06}
\cite{Schiffmann:canonical_basis} for the results 1) and 2).

3) Recently, the quiver Hecke algebras (or KLR-algebras, \cf
\cite{KhovanovLauda08} \cite{Rouquier08}) provide us a monoidal categorification of the one-half quantum group
$\Ut(\mfr{n}^+)$. A link between Lusztig's approach and the
quiver Hecke algebras has been established in \cite{VaragnoloVasserot09}.

4) Finally, assume that $\mfr{g}$ is of type $\mathbb{ADE}$. Hernandez and Leclerc showed that certain
subcategory of finite dimensional representations of the quantum
affine algebra $\Uq(\hat{\mfr{g}})$ provides a monoidal
categorification of $\Ut(\mfr{n}^+)$, \cf \cite{HernandezLeclerc11}. By \cite{Nakajima01} and
\cite[section 9]{HernandezLeclerc11}, their construction can be
understood in terms of graded quiver varieties and then be compared with the
work of Lusztig.

We remark that the categorification in 3) and 4) are compatible with
the (dual) canonical basis obtained in 2). Moreover, by 4), the categorification in present paper is compatible with the
dual canonical basis.
\subsubsection*{Whole quantum group}

We can define the algebra $\tUt(\mfr{g})$ as a variant of the
whole quantum group $\Ut(\mfr{g})$, \cf section \ref{sec:quantum_group}, which has
the triangle decomposition $\Ut(\mfr{n}^+)\otimes \tUt(\mfr{h})\otimes
\Ut(\mfr{n}^-)$. This variant plays a crucial role in Bridgland's work
 \cite{Bridgeland:Hall}, which we shall briefly recall. The whole
quantum group $\Ut(\mfr{g})$ is obtained from $\tUt(\mfr{g})$ by a
reduction at the Cartan part $\tUt(\mfr{h})$.

There have been various attempts to make a Hall algebra construction of the whole quantum group, \cf for example \cite{Kapranov98},
\cite{PengXiao97}, \cite{PengXiao00} \cite{XiaoXuZhang06}. The complete result
was obtained in the recent work of Bridgeland.

\begin{Thm*}[\cite{Bridgeland:Hall}]
Fix a finite field $\mathbb{F}_q$. Let $\tilde{U}_{\sqrt{q}}(\mfr{g})[(K_i)^{-1},(K_i')^{-1}]_{i\in I}$ denote the localization of $\tilde{U}_{\sqrt{q}}(\mfr{g})$ at the
Cartan part. Then it is isomorphic to the localization
of the Ringel Hall algebra of the $2$-periodic
complexes of projective $\mathbb{F}_q Q$-modules at the contractible complexes.
\end{Thm*}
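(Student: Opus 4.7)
The plan is to construct the isomorphism on generators and then verify the defining relations of $\tilde{U}_{\sqrt{q}}(\mfr{g})$. Let $\cC_2(\cP)$ denote the category of $\Z/2$-graded complexes of finitely generated projective $\F_q Q$-modules, equipped with chain maps modulo homotopy playing only an auxiliary role. Form the Ringel--Hall algebra $\cH(\cC_2(\cP))$ with product twisted by the symmetric Euler form on $\cC_2(\cP)$. The first preliminary step is to check that the isomorphism classes of acyclic objects form an Ore set (using the fact that any acyclic projective $2$-periodic complex is a direct sum of the elementary contractible complexes $K_P = (P \xrightarrow{\id} P \xrightarrow{0} P)$), so that the localization $\cD\cH(\cC_2(\cP))$ is well defined.

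Next I would define the homomorphism on generators. For each vertex $i \in I$, the simple $\F_q Q$-module $S_i$, viewed as a complex concentrated in even degree, gets sent to a scalar multiple of $E_i$; placing it in odd degree yields $F_i$; the contractible complexes $K_{P_i}$ and their odd-shifts give $K_i^{\pm}$ and $(K_i')^{\pm}$. Restricting the Hall product to complexes concentrated in one fixed degree recovers the ordinary Ringel--Hall algebra of $\F_q Q$--modules, so Ringel's theorem delivers the quantum Serre relations among the $E_i$ and separately among the $F_i$ without further work.

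The heart of the argument is the verification of the Cartan commutation rules and the mixed relation. The conjugation of a concentrated complex by $K_P$ is controlled directly by the Euler form, which yields the relations of $\tilde{U}_{\sqrt{q}}(\mfr{h})$ with the Chevalley generators. For $[E_i, F_j]$ one must compute the Hall products of a complex concentrated in even degree with one concentrated in odd degree inside $\cD\cH(\cC_2(\cP))$. The output is a sum indexed by $\Z/2$-graded complexes with genuinely non-zero differential, and the key structural observation is that every such complex decomposes, up to isomorphism, as the direct sum of a contractible piece and a concentrated piece. Combining this decomposition with a careful count of Hall numbers (using Green's formula on the bialgebra structure on the underlying module Hall algebra) lets one reduce $E_i F_j - F_j E_i$ to a $\Z$-linear combination of contractible generators, which after localization matches Drinfeld's relation $[E_i,F_j]=\delta_{ij}(K_i - K_i')/(\sqrt{q}-\sqrt{q}^{-1})$.

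The main obstacle is precisely this last computation: controlling the Hall numbers between classes living in different $\Z/2$-degrees and showing that all the a priori non-trivial middle terms either cancel or collapse onto contractible objects. Once this is settled, surjectivity follows because every object of $\cC_2(\cP)$ is an extension of its cohomology (a pair of modules) by a contractible complex, so the image contains all generators of $\cD\cH(\cC_2(\cP))$. Injectivity is then forced by comparing the triangular decompositions on both sides: $\tilde{U}_{\sqrt{q}}(\mfr{g})$ has the PBW factorization $\Ut(\mfr{n}^+)\otimes \tUt(\mfr{h})\otimes \Ut(\mfr{n}^-)$, while the Hall side admits the analogous factorization via the decomposition of a $\Z/2$-graded projective complex into its positive, contractible and negative pieces, and the homomorphism respects both factorizations.
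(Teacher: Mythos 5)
You should first be aware that the paper does not prove this statement at all: it is quoted verbatim from Bridgeland's \emph{Quantum groups via Hall algebras of complexes} as background motivation, so the only meaningful comparison is with Bridgeland's own argument. Measured against that, your outline follows the right overall route (localization at quasi-central acyclic classes, commutation with $K$-classes via the Euler form, reduction of $[E_i,F_j]$ to contractible classes, surjectivity from a structure theorem for objects plus Ringel's theorem in the Dynkin case, injectivity from the triangular decompositions), but it contains one genuine conceptual gap.

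The gap is your treatment of the generators and of the embedding of the module Hall algebra. The objects of the category are $2$-periodic complexes of \emph{projective} $\F_q Q$-modules, so the simple module $S_i$ is not ``a complex concentrated in even degree'': a complex concentrated in one degree is a projective module with zero differential, and the Hall subalgebra spanned by such objects has no nontrivial extensions, hence is nothing like the Ringel Hall algebra of $\F_q Q$-modules. Consequently the quantum Serre relations among the $E_i$ (and the $F_i$) do \emph{not} come ``without further work.'' In Bridgeland's proof, $E_i$ is defined only after localization, as a rescaled product of the inverse of a contractible class with the class of the $2$-periodic minimal projective resolution $C_{S_i}$, and the fact that $M\mapsto E_M$ gives an algebra embedding of the twisted Hall algebra of $\F_q Q\mathrm{-mod}$ into the localized Hall algebra of complexes is one of the main theorems of that paper, proved by a nontrivial count matching extension numbers of complexes with extension numbers of modules. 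The same machinery is what makes your $[E_i,F_j]$ computation and your surjectivity step work: the correct structural statement is that (since $\F_q Q$ has finite global dimension, so acyclic complexes of projectives are contractible) every object decomposes as a direct sum of an acyclic complex and resolutions $C_M\oplus C_N^{*}$ of a pair of modules, not as ``contractible $\oplus$ concentrated.'' Without the resolution-and-rescaling mechanism, the generators you name do not exist in the algebra and the middle steps of your argument cannot be carried out as written.
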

The usual quantum group $\Ut(\mfr{g})$ can be obtained from the above
construction as the natural quotient of $\tUt(\mfr{g})[(K_i)^{-1},(K_i')^{-1}]_{i\in I}$.

In the work of Bridgeland, the realizations of the half-quantum groups $\Ut(\mfr{n}^+)$ and
$\Ut(\mfr{n}^-)$ can be identified with those in Ringel's
approach. The Cartan part $\tUt(\mfr{h})$ is generated by certain
complexes homotopic to zero, which are redundant information in the
study of the corresponding triangulated category. In the sense of section \ref{sec:one_half},
this Hall algebra approach can be viewed as an additive
categorification of $\Ut(\mfr{g})$.

Also, by the works of Khovanov, Lauda, Rouquier, and Ben Webster, \cf
\cite{KhovanovLauda08:III} \cite{Rouquier08} \cite{Webster10}
\cite{Webster13}, the quiver Hecke algebras provide a monoidal categorification of the modified quantum group
$\dot{\mbf{U}}_t(\mfr{g})$, which is a different variant of the whole
quantum group $\Ut(\mfr{g})$ \cite{Lus:intro}.

Finally, we notice that Fang and Rosso have constructed the whole quantum group in
the spirit of quantum shuffle algebras, \cf \cite{FangRosso12}.

\subsection{Main construction and result}
\label{sec:main_result}

In this paper, we give a geometric construction of the
whole quantum group for the Lie algebra $\mfr{g}$ of Dynkin type $\mathbb{A}$,
$\mathbb{D}$, $\mathbb{E}$, inspired by the following papers.

Inspired by \cite[Theorem 2.7]{KellerScherotzke2013}, we use some cyclic
quiver varieties associated
with roots of unity to replace the abelian category of
$2$-periodic complexes in \cite{Bridgeland:Hall}.

Then, the work of Hernandez and Leclerc \cite[Section
9]{HernandezLeclerc11} establishes a construction of the half-quantum groups $\Ut(\mfr{n}^+)$ and
$\Ut(\mfr{n}^-)$, which can be compared with Lusztig's
work by \cite[Section 9]{HernandezLeclerc11}. The techniques developed in \cite{HernandezLeclerc11} and
\cite{LP_rep_alg} will be crucial in our proofs.

We construct the Cartan part $\tUt(\mfr{h})$ from certain
strata of cyclic quiver varieties, which are identified with the
stratum $\{0\}$ in Nakajima's \emph{transverse slice theorem}
\cite[3.3.2]{Nakajima01}. The analog of these strata for graded quiver
varieties provides redundant information in
the study of quantum affine algebras \cite{{Nakajima01}}. So our
construction of the Cartan part shares the same spirit as that of
Bridgeland's work : 

\begin{center}
  Cartan part is categorified by redundant information.
\end{center}

In the sense of our previous discussion in categorification 2),
this geometric construction can be viewed as a monoidal categorification
of $\tUt(\mfr{g})$, which contains the Hernandez-Leclerc categorification of
$\Ut(\mfr{n})$. In particular, we obtain a positive basis of $\tUt(\mfr{g})$, which, up
to rescaling, contains the dual canonical basis of a one-half
quantum group with respect to Lusztig's bilinear form.

We refer the reader to Section
\ref{sec:construction} for the detailed construction and Theorems
\ref{thm:isom} \ref{thm:Lusztig_basis} for the rigorous statements of the results.

\subsection{Remarks}

This paper could be viewed as a geometric counterpart of Bridgeland's
work for the type $\mathbb{ADE}$. It is natural to compare this geometric construction
with the Hall algebra construction of Bridgeland. The details might appear
elsewhere. 

On the other hand, by choosing the shifted simple modules in derived
categories as in \cite[Section 8.2]{HernandezLeclerc11}, the analogous construction in the present paper
remains effective over graded quiver varieties associated with a
generic $q$. Details might appear elsewhere. The corresponding Grothendieck ring should then
be compared with the semi-derived Hall algebra associated with the
quiver $Q$ in
the sense of Gorsky \cite{Gorsky13}. However, this straightforward
generalization is not the unique approach. A completely different construction might
appear in Gorsky's future work.

In this paper, the twisted product defined for the
Grothendieck ring is different from those used by
\cite{Nakajima04} or by \cite{Hernandez02}\cite{HernandezLeclerc11}. It is
worth mentioning that our twisted product agrees with the non-commutative multiplication of
\cite{Hernandez02}\cite{HernandezLeclerc11} on the one half quantum group
$\Ut(\mathfrak{n}^+)$, as we shall prove in the last
section. We refer the reader to Example \ref{eg:compare_product} for a
comparison of various products.

Our Grothendieck ring $\tUt(\msf{g})$ are defined over some cyclic
quiver varieties, which are closely related
to the Grothendieck ring of finite dimensional representations of
the quantum affine algebra $\Uq(\hat{\msf{g}})$ at the root of unity
$q$. In particular, in \cite{Nakajima04}, Nakajima has used these cyclic
quiver varieties to study the $t$-analog of the $q$-characters on the
latter Grothendieck ring. However, to the best knowledge of the author, there exists no twisted product
in literature such that the
Cartan part of $\tUt(\msf{g})$ consists of center elements, which prevents a
direct reduction of $\tUt(\msf{g})$ to the $t$-deformed Grothendieck ring of
representations of $\Uq(\hat{\msf{g}})$ considered in
\cite{Nakajima04}\cite{Hernandez04}.

Finally, the present paper is just a first step of this geometric
approach. In particular, the reduction of the Cartan part discussed
here follows a straightforward algebraic approach, which was used by
Bridgeland. We shall use the corresponding geometric realization to
study quantum groups in a future work.

\section{Preliminaries}
\label{sec:preliminaries}

\subsection{Quantum groups}
\label{sec:quantum_group}
We recall the basic facts concerning
the quantum groups and refer the reader to Schiffmann's
note \cite{Schiffmann06} or Lusztig's book \cite{Lus:intro} for more details. We
shall follow the notations used in \cite{KimuraQin11} \cite{Kimura10}.

Let $n$ be any given positive integer and define the index set $I=\{1,\ldots,n\}$. Fix a symmetric
root datum. Denote the Cartan matrix by $C=(a_{ij})_{i,j\in I}$ and
the positive simple roots by $\set{\alpha_i,i\in I}$. Let $\mfr{g}$
be the corresponding Kac-Moody Lie algebra.


Let $t$ be an indeterminate. We define $[n]_t =(t^n-t^{-n})/(t-t^{-1})$, $[n]_t!=[1]_t\cdot [2]_t\cdot\ldots \cdot[n]_t$. Let $\tUt(\mfr{g})$ be the $\Q(t)$-algebra generated by the \emph{Chevalley generators} $E_i,
K_i,K_i',F_i$, $i\in I$, which are subject to the following
relations
\begin{align*}
  \sum_{k=0}^{1-a_{ij}}(-1)^kE_i^{(k)}E_jE_i^{(1-a_{ij}-k)}=0,\\
\sum_{k=0}^{1-a_{ij}}(-1)^kF_i^{(k)}F_jF_i^{(1-a_{ij}-k)}=0,\\
  [E_i,F_j]=\delta_{ij}\frac{K_i-K_i'}{t-t^{-1}},\\
[K_i,K_j]=[K_i,K_j']=[K_i',K_j']=0,
\end{align*}
\begin{align*}
K_iE_j=t^{a_{ij}}E_jK_i,\\
K_iF_j=t^{-a_{ij}}F_jK_i,\\
K'_iE_j=t^{-a_{ij}}E_jK'_i,\\
K'_iF_j=t^{a_{ij}}F_jK'_i,
\end{align*}
where $E_i^{(k)}=E_i^k/[k]_t!$ and $F_i^{(k)}=F_i^k/[k]_t!$.

The \emph{quantum group} $\Ut(\mfr{g})$ is defined as the quotient algebra of $\tUt(\mfr{g})$ with
respect to the ideal generated by the elements $K_i *
K_i'-1$, $i\in I$.

Let $\tUt(\mfr{n}^+)$ be the subalgebra of $\tUt(\mfr{g})$ generated
by $E_i$, $i\in I$, $\tUt(\mfr{h})$ the subalgebra of $\tUt(\mfr{g})$
generated by $K_i$, $K_i'$, $\tUt(\mfr{n}^-)$ the subalgebra of $\tUt(\mfr{g})$ generated
by $F_i$. The subalgebras $\Ut(\mfr{n}^+)$, $\Ut(\mfr{h})$,
$\Ut(\mfr{n}^-)$ of $\Ut(\mfr{g})$ are defined similarly. Then both $\tUt(\mfr{g})$ and $\Ut(\mfr{g})$ have triangular decompositions:
\begin{align*}
  \tUt(\mfr{g})&=\tUt(\mfr{n}^+)\otimes \tUt(\mfr{h})\otimes \tUt(\mfr{n}^-),\\
  \Ut(\mfr{g})&=\Ut(\mfr{n}^+)\otimes \Ut(\mfr{h})\otimes \Ut(\mfr{n}^-).
\end{align*}
From the definitions, we have $\Ut(\mfr{n}^+)=\tUt(\mfr{n}^+)$, $\Ut(\mfr{n}^-)=\tUt(\mfr{n}^-)$, and $\Ut(\mfr{h})=\tUt(\mfr{h})/(K_i*K_i'-1)_i$.


The \emph{Kashiwara's bilinear form} $(\ ,\ )_K$ on $\Ut(\mfr{n}^+)$
has the property $(E_i,E_j)_K=\delta_{ij}$, \cf \cite[Section 3.4]{Kas:crystal}.
The \emph{Lusztig's bilinear form} $(\ ,\ )_L$ on $\Ut(\mfr{n}^+)$ has
the property $(E_i,E_j)_L=\delta_{ij}(1-t^2)^{-1}$, \cf
\cite[1.2.5]{Lus:intro}. In general, by \cite[2.2]{Leclerc04}, for any homogeneous
elements $x,y\in\Ut(\mfr{n}^+)_{\beta}$, where $\beta=\sum_{i\in I}
\beta_i \alpha_i$, $\beta_i\in\N$, we
have
\begin{align}\label{eq:compare_bilinear_form}
  (x,y)_K=(1-t^2)^{\sum_i \beta_i}\cdot (x,y)_L.
\end{align}

We let $A_t(\mfr{n}^+)$ denote the quantum coordinate ring which is
the graded dual $\Q(t)$-vector space of $\Ut(\mfr{n}^+)$ endowed with a
restricted multiplication, \cf \cite[section 4]{GeissLeclercSchroeer11}
and also \cite[section 3]{Kimura10}. 

\begin{Prop}[{\cite[Proposition 4.1]{GeissLeclercSchroeer11}}]\label{prop:dual_alg}
  There exists an algebra isomorphism $\Psi$ from $\Ut(\mfr{n}^+)$ to $A_t(\mfr{n}^+)$ such
  that any element $x$ is sent to the linear map $(x,\ )_K$.
\end{Prop}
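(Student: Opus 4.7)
The plan is to verify the three required properties of the stated map $\Psi: x \mapsto (x,\ )_K$: that it is well-defined as a map into the graded dual, that it is a linear isomorphism, and that it respects the algebra structures.

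First I would recall the construction of $A_t(\mfr{n}^+)$ in some detail. As a graded vector space it is $\bigoplus_\beta \Ut(\mfr{n}^+)_\beta^*$, and its multiplication is obtained by dualizing a (suitably twisted) coproduct $\Delta \colon \Ut(\mfr{n}^+) \to \Ut(\mfr{n}^+)\otimes \Ut(\mfr{n}^+)$, characterized on generators by $\Delta(E_i)=E_i\otimes 1 + K_i\otimes E_i$ followed by the usual reduction to the $\mfr{n}^+$-part. Because the Kashiwara form is graded (it vanishes on pairs of elements of distinct weights) and each graded piece is finite-dimensional, the map $\Psi$ automatically takes $\Ut(\mfr{n}^+)_\beta$ into $\Ut(\mfr{n}^+)_\beta^*$, so well-definedness into the graded dual is immediate.

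Next I would establish bijectivity by invoking non-degeneracy of $(\ ,\ )_K$ on each homogeneous component, which follows from $(E_i,E_j)_K=\delta_{ij}$ together with the standard compatibility with the coproduct (the form is essentially a Drinfeld-type pairing). Non-degeneracy implies injectivity on each graded piece, and equality of finite dimensions then gives bijectivity piece by piece, hence a graded linear isomorphism.

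The main step is the algebra-homomorphism property, and this is where I expect the real content to lie. Concretely, one must show
\begin{equation*}
\Psi(xy)(z) \;=\; \bigl(\Psi(x)\cdot\Psi(y)\bigr)(z)
\quad\text{for all } x,y,z\in\Ut(\mfr{n}^+).
\end{equation*}
The left-hand side equals $(xy,z)_K$. Writing $\Delta(z)=\sum z_{(1)}\otimes z_{(2)}$ and using that the multiplication in $A_t(\mfr{n}^+)$ is defined by dualization of $\Delta$, the right-hand side equals $\sum (x,z_{(1)})_K\,(y,z_{(2)})_K$, which is the value of $(\ ,\ )_K^{\otimes 2}$ on $x\otimes y$ and $\Delta(z)$. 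Thus the statement reduces to the identity
\begin{equation*}
(xy,z)_K \;=\; \bigl(x\otimes y,\;\Delta(z)\bigr)_{K}^{\otimes 2}.
\end{equation*}
This is a standard property of the Kashiwara form, proved by induction on the weight of $z$: the base case is the generator identity $(E_i,E_j)_K=\delta_{ij}$, and the inductive step uses coassociativity of $\Delta$ together with the recursive characterization of $(\ ,\ )_K$ via the skew-derivations $_ir$ and $r_i$ that appear in Kashiwara's original definition. The hard part is purely bookkeeping: keeping the twisted signs/powers of $t$ consistent between the coproduct convention and the form convention, which is precisely the subtlety that forces $A_t(\mfr{n}^+)$ to carry the \emph{restricted} (rather than naive) dual multiplication. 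Once this identity is in hand, $\Psi$ is the desired algebra isomorphism, and one may alternatively cite \cite[Proposition 4.1]{GeissLeclercSchroeer11} for the detailed verification.
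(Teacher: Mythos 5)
The paper does not prove this proposition at all: it is quoted verbatim from \cite[Proposition 4.1]{GeissLeclercSchroeer11}, so there is no internal argument to compare against. Your sketch is the standard route behind that cited result and is correct in outline: gradedness of $(\ ,\ )_K$ gives well-definedness into the graded dual, nondegeneracy on each weight space gives bijectivity, and the homomorphism property reduces to the compatibility $(xy,z)_K=\bigl(x\otimes y,\,r(z)\bigr)$ between the form and the (twisted) comultiplication $r$ on $\Ut(\mfr{n}^+)$, which is what dualizes to the restricted multiplication on $A_t(\mfr{n}^+)$.

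Two caveats on where the real content sits, since your write-up slightly understates them. First, nondegeneracy of the Kashiwara form on each graded piece does not ``follow from'' $(E_i,E_j)_K=\delta_{ij}$ plus formal compatibility; it is itself a theorem (Kashiwara/Lusztig), so your bijectivity step is an appeal to a known result rather than a derivation. Second, the coproduct relevant here is the map $r\colon \Ut(\mfr{n}^+)\to \Ut(\mfr{n}^+)\otimes \Ut(\mfr{n}^+)$ with the $t$-twisted multiplication on the tensor square (the $K_i$-terms of the full coproduct having been stripped off), and the pairing on the tensor square must be taken compatibly with that twist; your phrase ``followed by the usual reduction to the $\mfr{n}^+$-part'' is the right idea but this bookkeeping is exactly what the cited reference carries out. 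With those standard facts granted, your argument is complete and coincides with the proof in \cite{GeissLeclercSchroeer11} that the paper defers to.
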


\subsection{Graded and cyclic quiver varieties}
\label{sec:quiver_variety}
In this paper, we consider the quivers\footnote{The quiver
  $Q$ used in this paper should be compared with the opposite quiver $Q\op$ used in
  \cite{KimuraQin11}} $Q$ of type $\mathbb{A}$, $\mathbb{D}$, $\mathbb{E}$.

Choose any $q\in\C^*$ such that $q\neq 1$. It generates a
cyclic subgroup $\langle q \rangle$ in the multiplicative group
 $(C^*,*)$. We assume that either $\langle q\rangle$ is an infinite
 group or its cardinality is divisible by $2$.

Because the underlying graph of the Dynkin quiver $Q$ is a tree, we
can choose a height function $\xi:I\ra \langle q\rangle$ such that
$\xi(k)=q*\xi(i)$ whenever there is an arrow from $k$ to $i$ in $Q$. 

Define $\hat{I}=\{(i,a)\in I\times \langle q \rangle |
\xi(i)*a^{-1}\in \langle q^2 \rangle \}$. The reader is referred to
Example \ref{eg:A_3_quiver} for an example.

Let $\sigma$ denote the automorphism of $I\times \langle q \rangle$
such that $\sigma(i,a)=(i,q^{-1}a)$. Then  $I\times \langle q \rangle$
is the disjoint union of $\hat{I}$ and $\sigma\hat{I}$. We use $\tau$ to denote the automorphism $\sigma^2$ on $I\times
\langle q \rangle$.

We always use $x$ to denote the elements in
$\sigma\hat{I}$. We use $v$, $w$ to denote the finitely supported elements in
$\N^{\sigma\hat{I}}$, $\N^{\hat{I}}$ respectively. Let $e_{i,a}$
denote the characteristic function of $(i,a)$, which is also viewed as the
unit vector supported at $(i,a)$. We have $\sigma^*e_{\sigma
  (i,a)}=e_{i,a}$. For any given $v,w$, we denote the associated
$I\times \braket{q}$-graded vector spaces by $V=\oplus_{i,a}
V(i,a)=\oplus \C^{v(i,a)}$ and $W=\oplus_{i,a}
W(i,a)=\oplus \C^{w(i,a)}$.

The \emph{q-Cartan matrix} $C_q$ is a linear map from $\Z^{\sigma\hat{I}}$,
$\Z^{\hat{I}}$, such that for any $(i,a)\in\sigma\hat{I}$, we have
\begin{align}
  C_q e_{i,a}= e_{i,q a}+e_{i,q^{-1}a}+\sum_{j\in I, j\neq i}a_{ij} e_{j,a}.
\end{align}

A pair $(v,w)$ is called \emph{$l$-dominant} if $w-C_qv\geq 0.$

We shall define
graded/cyclic quiver varieties. Details could be found in \cite{Nakajima01} (\cf also \cite{Nakajima09} \cite{Qin12}
\cite{KimuraQin11}).

Let $\Omega$ denote the set of the arrows of $Q$. Similarly, let
$\overline{\Omega}$ denote the set
of the arrows of the opposite quiver $Q\op$. For each arrow $h$, we
let $s(h)$ and $t(h)$ denote its source and target respectively. Define
\begin{align}
  E^q(\Omega;v,w)&=\oplus_{(i,a)\in \sigma\hat{I}}\oplus_{h\in \Omega
    :s(h)=i,t(h)=j}\Hom(V(i,a),V(j,aq^{-1})),\\
L^q(w,v)&=\oplus_{x\in \sigma\hat{I}}\Hom(W(\sigma ^{-1}x),V( x)),\\
L^q(v,w)&=\oplus_{x\in \sigma\hat{I}}\Hom(V(x),W(\sigma x)).
\end{align}
Define the vector space $\Rep^q(Q;v,w)$ to be
\begin{align}
  \Rep^q(Q;v,w)=E^q(\Omega;v,w)\oplus E^q(\overline{\Omega};v,w)\oplus
  L^q(w,v)\oplus L^q(v,w),
\end{align}
whose elements are denoted by 
\begin{align*}
(\oplus_hB_h,\oplus_{\oh}B_{\oh},(\alpha_i)_{i\in I},(\beta_i)_{i\in I})=(\oplus_{h\in \Omega}(\oplus_a B_{h,a}),\oplus_{\oh\in
    \overline{\Omega}}(\oplus_b B_{\oh,b}),(\oplus_a \alpha_{i,a})_{i\in
    I},(\oplus_b\beta_{i,b})_{i\in I})
\end{align*}
The group $GL_v=\prod_{x\in \sigma \hat{I}}GL_{v(x)}$ naturally acts on
$\Rep^q(Q;v,w)$. We have the map $\mu$ as the natural analog of
the moment map such that
\begin{align*}
  \mu((\oplus_hB_h,\oplus_{\oh}B_{\oh},(\alpha_i),(\beta_i))=&\sum_{h\in
    \Omega, \oh'\in\overline{\Omega}:s(h)=t(\oh')}B_hB_{\oh'}-\sum_{h\in
    \Omega, \oh'\in\overline{\Omega}:s(\oh')=t(h)}B_{\oh'}B_h+\sum_{i\in
  I}\alpha_i\beta_i,
\end{align*}
\cf \cite{Nakajima01} for details.

For the $GL_v$-variety $\mu^{-1}(0)$, we construct
Mumford's GIT\footnote{GIT stands for ``Geometric invariant theory''.} quotient $\mathcal{M}^q(v,w)$ and the categorical quotient $\mathcal{M}^q_0(v,w)$. There is a natural proper morphism $\pi$ from the
GIT quotient $\mathcal{M}^q(v,w)$ to the categorical quotient $\mathcal{M}^q_0(v,w)$.

\begin{Eg}\label{eg:A_3_quiver}
  Let the quiver $Q$ be given by Figure \ref{fig:A_3_quiver}. We
  can choose the height function $\xi$ such that
  $\xi(i)=q^{i-1}$. Then the $I\times \langle q
  \rangle$ is given by Figure \ref{fig:A_3_I}, where the blue vertices
  (in squared box)
  belong to $\hat{I}$ the other vertices belong to $\sigma \hat{I}$.

Then the vector space
  $\Rep^q(Q;v,w)$ is described in Figure \ref{fig:A_3_rep_space}, whose
rows and columns are indexed by $I$-degrees (vertices) and
$\braket{q}$-degrees (heights)
respectively. 

In this example, the analog of the moment map $\mu$ take the form
\begin{align*}
  (\alpha_1\beta_1+B_{h_1}B_{\overline{h}_1})\oplus
  (\alpha_2\beta_2+B_{h_2}B_{\overline{h}_2}-B_{\overline{h_1}}B_{h_1})\oplus (\alpha_3\beta_3-B_{\overline{h}_2}B_{h_2}).
\end{align*}
\end{Eg}

\begin{figure}[htb!]
 \centering
\beginpgfgraphicnamed{A_3_quiver}
  \begin{tikzpicture}
    \node [] (1) at (0,-2) {1}; 
    \node  [] (2) at (2,0) {2}; 
    \node [] (3) at (4,2) {3};
    \draw[-stealth] (3) edge node[above] {$h_2$} (2); 
    \draw[-stealth] (2) edge node[above] {$h_1$} (1);
  \end{tikzpicture}
\endpgfgraphicnamed
\caption{A quiver of type $A_3$}
\label{fig:A_3_quiver}
\end{figure}
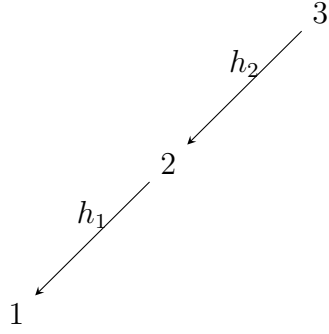

\begin{figure}[htb!]
 \centering
\beginpgfgraphicnamed{A_3_I}
\begin{tikzpicture}[scale=0.55]

\node [draw,color=blue] (w10) at (-12,-6) {$(1,1)$};
\node [draw,color=blue] (w21) at (-8,-2) {$(2,q)$};
\node [draw,color=blue] (w32) at (-4,2) {$(3,q^2)$};
\node [draw,color=blue] (w12) at (-4,-6) {$(1,q^2)$};
\node [draw,color=blue] (w23) at (0,-2) {$(2,q^3)$};
\node [draw,color=blue] (w34) at (4,2) {$(3,q^4)$};

\node [color=blue] (w14) at (4,-6) {$\cdots$};
\node [color=blue] (w25) at (8,-2) {$\cdots$};

\node [] (v11) at (-8,-6) {$(1,q)$};
\node [] (v22) at (-4,-2) {$(2,q^2)$};
\node [] (v33) at (0,2) {$(3,q^3)$};
\node [] (v13) at (0,-6) {$(1,q^3)$};
\node [] (v24) at (4,-2) {$(2,q^4)$};
\node [] (v35) at (8,2) {$(3,q^5)$};

\node [] (v20) at (-12,-2) {$\cdots$};
\node [] (v31) at (-8,2) {$\cdots$};

\draw[-latex][dashed] (w12) edge node[above] {$\sigma$}  (v11);
\draw[-latex][dashed] (v13) edge node[above] {$\sigma$}  (w12);

\end{tikzpicture}
\endpgfgraphicnamed
\caption{Vertices $I\times \langle q\rangle$}
\label{fig:A_3_I}
\end{figure}
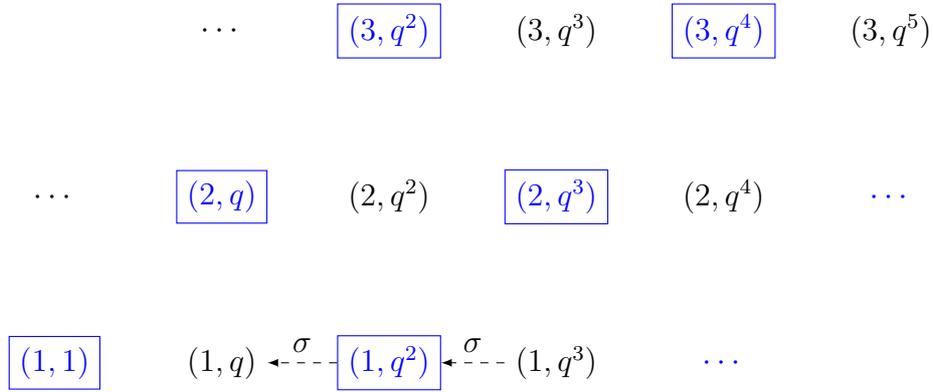

\begin{figure}[htb!]
 \centering
\beginpgfgraphicnamed{A_3_rep_space}
\begin{tikzpicture}[scale=0.55]

\node [color=blue] (w10) at (-12,-6) {$W(1,1)$};
\node [color=blue] (w21) at (-8,-2) {$W(2,q)$};
\node [color=blue] (w32) at (-4,2) {$W(3,q^2)$};
\node [color=blue] (w12) at (-4,-6) {$W(1,q^2)$};
\node [color=blue] (w23) at (0,-2) {$W(2,q^3)$};
\node [color=blue] (w34) at (4,2) {$W(3,q^4)$};

\node [color=blue] (w14) at (4,-6) {$\cdots$};
\node [color=blue] (w25) at (8,-2) {$\cdots$};

\node [] (v11) at (-8,-6) {$V(1,q)$};
\node [] (v22) at (-4,-2) {$V(2,q^2)$};
\node [] (v33) at (0,2) {$V(3,q^3)$};
\node [] (v13) at (0,-6) {$V(1,q^3)$};
\node [] (v24) at (4,-2) {$V(2,q^4)$};
\node [] (v35) at (8,2) {$V(3,q^5)$};

\node [] (v20) at (-12,-2) {$\cdots$};
\node [] (v31) at (-8,2) {$\cdots$};

\node [color=blue] (deg0) at (-12,4) {$\mathrm{height}=1$};
\node [color=blue] (deg1) at (-8,4) {$q$};
\node [color=blue] (deg2) at (-4,4) {$q^2$};
\node [color=blue] (deg3) at (0,4) {$q^3$};
\node [color=blue] (deg4) at (4,4) {$q^4$};
\node [color=blue] (deg5) at (8,4) {$q^5$} ;

\draw[-triangle 60][color=red] (w12) edge  node[above] {$\alpha_1$} (v11);

\draw[-triangle 60][color=red] (w23) edge  node[above] {$\alpha_2$} (v22);

\draw[-triangle 60][color=red] (w34) edge  node[above] {$\alpha_3$} (v33);

\draw[-triangle 60][color=blue] (v13) edge node[above] {$\beta_1$} (w12);
\draw[-triangle 60][color=blue] (v11) edge node[above] {$\beta_1$} (w10);

\draw[-triangle 60][color=blue] (v24) edge node[above] {$\beta_2$} (w23);
\draw[-triangle 60][color=blue] (v22) edge node[above] {$\beta_2$} (w21);

\draw[-triangle 60][color=blue] (v35) edge node[above] {$\beta_3$} (w34);
\draw[-triangle 60][color=blue] (v33) edge node[above] {$\beta_3$} (w32);

\draw[-triangle 60] (v24) edge node[above] {$h_1$} (v13);

\draw[-triangle 60] (v22) edge node[above] {$h_1$} (v11);

\draw[-triangle 60] (v35) edge node[above] {$h_2$} (v24);

\draw[-triangle 60] (v33) edge node[above] {$h_2$} (v22);

\draw[-triangle 60] (v13) edge node[above] {$\overline{h_1}$}  (v22);

\draw[-triangle 60] (v24) edge node[above] {$\overline{h_2}$}  (v33);

\end{tikzpicture}
\endpgfgraphicnamed
\caption{Vector space $\Rep^q(Q;v,w)$}
\label{fig:A_3_rep_space}
\end{figure}

First, assume $q$ is not a root of unity. Then the quotients
$\mathcal{M}^q(v,w)$ and $\mathcal{M}^q_0(v,w)$ do not depend on
$q$. They will be called the \emph{graded quiver varieties} and
denoted by $\grProjQuot(v,w)$,
$\grAffQuot(v,w)$ respectively. Let $\grAffQuot(w)$ denote the natural
union $\cup_v \grAffQuot(v,w)$. This is a finite dimensional affine
variety with a stratification into the
\emph{regular strata} $$\grAffQuot(w)=\sqcup_{v:w-C_qv\geq 0} \grRegStratum(v,w).$$

Similarly, assume $q$ equals $\epsilon$, which is a root of
unity. $\cycProjQuot(v,w)$ and $\cycAffQuot(v,w)$  will be called the
\emph{cyclic quiver varieties}. Let $\cycAffQuot(w)$ denote the
natural union $\cup_v \cycAffQuot(v,w)$. 

\begin{Prop}[{\cite[Section 2.5]{Nakajima01}}]\label{prop:finite_dominant}
Assume the quiver $Q$ is of Dynkin type $\mathbb{A}$, $\mathbb{D}$,
$\mathbb{E}$. Then the union $\cycAffQuot(w)$ is
  finite-dimensional with a stratification into
the regular strata
\begin{align}
  \cycAffQuot(w)=\sqcup_{v:w-C_qv\geq 0} \cycRegStratum(v,w).
\end{align}

\end{Prop}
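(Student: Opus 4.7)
The plan is to follow Nakajima's analysis of quiver varieties in \cite[Section 2.5]{Nakajima01}, adapted to the cyclic setting at a root of unity. The two assertions are somewhat independent: the stratification is a formal consequence of the transverse slice picture, while the finite-dimensionality is where the ADE hypothesis enters.

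First, I would establish the stratification. The affine quotient $\cycAffQuot(v,w)$ parametrizes closed $GL_v$-orbits in $\mu^{-1}(0)$, and by Nakajima's description these are precisely the semisimple representations of the framed preprojective algebra over $(v,w)$. Decomposing a closed orbit into its semisimple summands assigns to it a dimension vector $v' \le v$ such that the underlying orbit lies in the regular stratum $\cycRegStratum(v',w)$. The nonemptiness of $\cycRegStratum(v',w)$ is governed by the $l$-dominance condition $w - C_q v' \geq 0$, which is the standard criterion for existence of a stable framed representation with prescribed multiplicities. Unioning these strata over all $v$ (which, by the next step, produces only finitely many) yields the claimed decomposition
\[
\cycAffQuot(w) = \bigsqcup_{v:\, w-C_qv \geq 0} \cycRegStratum(v,w).
\]

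Next, I would prove the finite-dimensionality by showing that the set $\{v \in \N^{\sigma\hat I} : w - C_q v \geq 0\}$ is finite for fixed $w$. Writing out $C_q$ coordinate-wise, the condition at $(j,b) \in \hat I$ reads
\[
w(j,b) \geq v(j,qb) + v(j,q^{-1}b) - \sum_{i \sim j} v(i,b),
\]
so summing such inequalities along appropriate oriented paths recovers the dimension vectors of finite-dimensional modules for a Kac--Moody algebra built from the cyclic cover of $Q$. In type $\mathbb{A}\mathbb{D}\mathbb{E}$, the pair consisting of $Q$ together with the height function $\xi$ is known (via Nakajima's identification with $q$-characters and also by direct inspection) to force this system to have only finitely many nonnegative integer solutions. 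Granted this, each stratum $\cycRegStratum(v,w)$ has dimension bounded by the standard formula $2v^t w - v^t C_q v$ (which only involves finitely many $v$), and the Zariski closure $\cycAffQuot(v,w)$ sits inside this finite-dimensional ambient space; taking the finite union gives a finite-dimensional variety.

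The main obstacle is the finiteness of the $l$-dominance locus. One clean way to handle it is to argue as in \cite[Section 2.5]{Nakajima01}: for each $i \in I$, sum the inequality over the orbit of $\tau$ acting on the second coordinate to cancel the off-diagonal contributions of $C_q$, and then invoke the finiteness of fundamental coweights in ADE type. The only new input over the non-root-of-unity case is that the cyclic identification $\tau$ makes the relevant sums finite, and our assumption that $|\langle q\rangle|$ is divisible by $2$ (or infinite) ensures that $\hat I$ and $\sigma\hat I$ are genuinely disjoint, so the combinatorics of $C_q$ is well-defined. Once finiteness of the index set is in hand, everything else reduces to Nakajima's general results, which hold verbatim in the cyclic setting.
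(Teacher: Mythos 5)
Note first that the paper itself offers no proof of this proposition: it is quoted from \cite[Section 2.5]{Nakajima01}, where the statement is inherited from ordinary (ungraded) quiver varieties of type $\mathbb{A}\mathbb{D}\mathbb{E}$ --- the cyclic varieties being fixed-point loci of a finite cyclic group action --- so both the stratification and the finite-dimensionality come for free from the finite-type ungraded case. Your first half (points of the affine quotient are closed orbits, i.e.\ semisimple representations; each point lies in $\cycRegStratum(v',w)$ for the dimension vector of its stable part; nonempty regular strata force $w-C_qv'\geq 0$) is the standard argument and is fine, modulo the remark that only the direction ``nonempty $\Rightarrow$ $l$-dominant'' is needed, since empty strata are allowed in the disjoint union.

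The genuine gap is the finiteness of $\{v\in\N^{\sigma\hat{I}}\,:\,w-C_qv\geq 0\}$, which you assert but do not prove, and the mechanism you invoke does not work as stated. A ``Kac--Moody algebra built from the cyclic cover of $Q$'' has a Cartan-type matrix that is \emph{not} of finite type: the paper itself stresses that $C_q$ is not injective at the root of unity $\epsilon$, so no finiteness can be read off from $C_q$ alone, and ``finiteness of fundamental coweights'' is not a statement that applies here; likewise summing over the $\tau$-orbit does not ``cancel the off-diagonal contributions''. What does work is the following: for each $j\in I$ sum the coordinate inequalities $v(j,\epsilon b)+v(j,\epsilon^{-1}b)-\sum_{i\sim j}v(i,b)\leq w(j,b)$ over all heights $b$ with $(j,b)\in\hat{I}$ (a finite sum, since $|\langle\epsilon\rangle|=2h$); the height shifts disappear and one obtains, for the per-vertex totals $V_i=\sum_a v(i,a)$ and $W_j=\sum_b w(j,b)$,
\begin{align*}
  2V_j-\sum_{i\sim j}V_i\;\leq\;W_j,\qquad\text{i.e.}\qquad CV\leq W,
\end{align*}
where $C$ is the ordinary $\mathbb{A}\mathbb{D}\mathbb{E}$ Cartan matrix. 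Since $C$ is positive definite, the recession cone $\{V\geq 0: CV\leq 0\}$ is trivial ($V\geq 0$ and $CV\leq 0$ give $V^{t}CV\leq 0$, hence $V=0$), so the polyhedron $\{V\geq 0:CV\leq W\}$ is bounded and there are finitely many admissible $v$. (Equivalently, pair the inequalities with the Perron--Frobenius eigenvector of the adjacency matrix, whose eigenvalue $2\cos(\pi/h)$ is $<2$.) With this finiteness in hand your conclusion follows, since each stratum is a finite-dimensional variety --- though your dimension formula $2v^{t}w-v^{t}C_qv$ is the ungraded one, not the graded/cyclic one of \eqref{eq:dimension}; alternatively, one can bypass the combinatorics entirely by Nakajima's fixed-point realization, which is the route the citation actually takes.
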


The properties of the cyclic quiver varieties are similar to those of the graded quiver varieties, expect for the following two
important differences:
\begin{itemize}
\item the linear map $C_q$ (q-Cartan matrix) is not injective;
\item it is not known if the smooth cyclic quiver variety $\cycProjQuot(v,w)$ is connected or not.
\end{itemize}

The smooth cyclic quiver variety $\cycProjQuot(v,w)$ is
pure-dimensional, \cf \cite[(4.1.6)]{Nakajima01}. For any $v$, choose a set $\{\alpha_v\}$ such that it parameterizes the connected
component of $\cycProjQuot(v,w)$. For any $l$-dominant pair $(v,w)$,
since the restriction of $\pi$ on the regular stratum
$\cycRegStratum(v,w)$ is a homeomorphism, the set $\set{\alpha_v}$ naturally parameterizes the connected components of this regular stratum:
\begin{align}
  \cycRegStratum(v,w)=\sqcup_{\alpha_{v}}\cycRegStratum^{;\alpha_{v}}(v,w).
\end{align}

Let $1_{\cycProjQuot(v,w)}$ denote the perverse sheaf associated with
the trivial local system of rank $1$ on $\cycProjQuot(v,w)$. Denote
the perverse sheaf $\pi_{!}(1_{\cycProjQuot(v,w)})$ by $\pi(v,w)$.

Using the transverse slice theorem (\cf \cite{Nakajima01}), we can
simplify the decomposition of
$\pi(v,w)$ as follows (\cf the proof of Theorem 8.6 in \cite{Nakajima04}):
\begin{align}
  \label{eq:decomposition_pi}
\pi(v,w)=\sum_{v':w-C_q v'\geq 0,\ v'\leq v}a_{v,v';w}(t)\cL(v',w),
\end{align}
where we denote $\cF[d]^{\oplus m}$ by $mt^d\cF$ for any sheaf $\cF$ and
$m\in \N$, $d\in Z$, and we define 
\begin{align}
  \cL(v',w)=IC(\cycRegStratum(v',w))=\oplus_{\alpha_{v'}}IC(\cycRegStratum^{;\alpha_{v'}}(v',w)).
\end{align}
Notice that we have $a_{v,v';w}(t)\in\N[t^\pm]$, $a_{v,v';w}(t^{-1})=a_{v,v';w}(t)$, and $a_{v,v;w}=1$. We
do not know if $\cL(v',w)$ is a simple perverse sheaf or not.

For any decomposition $w=w^1+w^2$, we have the restriction functor
between the derived category of constructible sheaves
\begin{align*}
  \tRes^w_{w^1,w^2}:\cD_c(\cycAffQuot(w))\ra
\cD_c(\cycAffQuot(w^1)\times \cycAffQuot(w^2)).
\end{align*}

By \cite{VaragnoloVasserot03}, $\tRes^w_{w^1,w^2}(\pi(v,w))$ equals  
\begin{align}\label{eq:restriction_pi}
\oplus_{v^1+v^2=v}\pi(v^1,w^1)\boxtimes  \pi(v^2,w^2)[d((v^2,w^2),(v^1,w^1))-d((v^1,w^1),(v^2,w^2))],
\end{align}
where the bilinear form $d(\ ,\ )$ is given by
\begin{align}
  \label{eq:dimension}
d((v^1,w^1),(v^2,w^2))=(w^1-C_qv^1)\cdot \sigma^* v^2+ v^1\cdot \sigma^*w^2.  
\end{align}

For each $w$, the Grothendieck group $K_w$ is defined as the
free abelian group generated by the perverse sheaves $\cL(v,w)$
appearing in \eqref{eq:decomposition_pi}. It has two
$\tBase$-bases: $\{\pi(v,w)|w-C_qv\geq 0,\cycRegStratum(v,w)\neq \emptyset\}$ and $\{\cL(v,w)\}$. Then its dual $R_w=\Hom_\tBase(K_w,\tBase)$ has the corresponding dual bases
$\{\chi(v,w)|w-C_qv\geq 0,\cycRegStratum(v,w)\neq \emptyset\}$ and $\{\can(v,w)\}$. Notice that, throughout this paper, we only define $\cL(v,w)$,
$\can(v,w)$ for the $l$-dominant pairs $(v,w)$ such that $\cycRegStratum(v,w)\neq \emptyset$.

The restriction functors induce an $\N^{\hat{I}}$-graded coassociative comultiplication on
the $\N^{\hat{I}}$-graded Grothendieck group $\oplus_{w}K_{w}$, which we
denote by $\tRes$.

\subsection{Quiver varieties and quiver representations}
\label{sec:quiver_rep}
Let $\Rep(Q)$ denote the category of
left $\C
Q$-modules. Let $\cD^b(Q)$ denote the bounded derived category of
$\Rep(Q)$ with the shift functor $\Sigma$. In $\cD^b(Q)$, we have Auslander-Reiten triangles. Also, let $\nu$
denote the derived tensor with the bimodule $\Hom_{\C
  Q}(\C Q,\C)$. Then we have
\begin{align*}
  D\Hom_{\cD^b(Q)}(x,y)=\Hom_{\cD^b(Q)}(y,\nu x), \forall x,y\in\cD^b( Q).
\end{align*}
By abuse of notation, we use $\tau$ to denote the Auslander-Reiten
translation which is defined as
$\Sigma^{-1}\nu$.

Let $\Ind\cD^b(Q)$ be a full subcategory of
$\cD^b(Q)$ whose objects form a set of representatives of the
isoclasses of the indecomposable objects of $\cD^b(Q)$ such that it is
stable under $\tau$ and $\Sigma$. Its subcategory $\Ind \Rep(Q)$
is naturally defined.

Assume $q$ is not a root of unity, then we can choose a natural
identification of $\sigma\hat{I}$ with (the objects of) $\Ind \cD^b(Q)$ such
that it commutes with $\tau$.

Define 
\begin{align}\label{eq:define_W}
  \begin{split}
    W^+&=\oplus_{x\in\Ind\Rep(Q)} \N e_{\sigma x},\\
    V^+&=\oplus_{x\in\Ind\Rep(Q),\ x\ \mathrm{is\ not\ injective}}
    \N
    e_{x},\\
    W^S&=\oplus_{x\in\{S_i,i\in I\}} \N e_{\sigma x}.
  \end{split}
\end{align}

\begin{Eg}[Quiver type $A_3$]
Let us continue Example \ref{eg:A_3_quiver}. The vertices in $\sigma
\hat{I}$ take the form $(i,q^{i+2d})$, $d\in\Z$, \cf Figure
\ref{fig:A_3_I}. On the other hand, the Auslander Reiten
quiver of $\Ind\cD^b(Q)$ is given in Figure \ref{fig:A_3_AR_quiver}. Notice that
the projective $\C Q$-module $P_1$ is also the simple module $S_1$.

So we can
identify $\sigma \hat{I}$ with the objects of $\Ind\cD^b(Q)$ by sending
the vertex $(i,q^{i+2d})$ to the object $\tau^{-d}P_i$. Then
Figure \ref{fig:A_3_rep_space} becomes Figure
\ref{fig:A_3_rep_space_module}. It follows that the dimension vectors in $W^+$ concentrate at the
vertices $\sigma x$, $x\in\Ind\C Q-mod$, those in $V^+$ concentrate at
$S_1$, $P_2$, $S_2$, and those in $W^S$ concentrate at $\sigma S_i$, $i=1,2,3$.
\end{Eg}

\begin{figure}[htb!]
 \centering
\beginpgfgraphicnamed{A_3_AR_quiver}
\begin{tikzpicture}[scale=0.35]
\node [] (v11) at (-8,-6) {$P_1$};
\node [] (v22) at (-4,-2) {$P_2$};
\node [] (v33) at (0,2) {$P_3$};
\node [] (v13) at (0,-6) {$S_2$};
\node [] (v24) at (4,-2) {$I_2$};
\node [] (v35) at (8,2) {$\Sigma P_1$};
\node [] (v15) at (8,-6) {$S_3$};
\node [] (v26) at (12,-2) {$\Sigma P_2$};
\node [] (v37) at (16,2) {$\Sigma S_2$};
\node [] (v17) at (16,-6) {$\Sigma P_3$};
\node [] (v28) at (20,-2) {$\Sigma I_2$};
\node [] (v39) at (24,2) {$\Sigma S_3$};

\node [] (v20) at (-8,-2) {$\cdots$};
\node [] (v31) at (24,-2) {$\cdots$};

\draw[-triangle 60] (v11) edge  (v22);
\draw[-triangle 60] (v22) edge  (v33);
\draw[-triangle 60] (v33) edge  (v24);
\draw[-triangle 60] (v22) edge  (v13);

\draw[-triangle 60] (v13) edge  (v24);
\draw[-triangle 60] (v24) edge  (v35);
\draw[-triangle 60] (v35) edge  (v26);
\draw[-triangle 60] (v24) edge  (v15);

\draw[-triangle 60] (v15) edge  (v26);
\draw[-triangle 60] (v26) edge  (v37);
\draw[-triangle 60] (v37) edge  (v28);
\draw[-triangle 60] (v26) edge  (v17);

\draw[-triangle 60] (v17) edge  (v28);
\draw[-triangle 60] (v28) edge  (v39);

\draw[-latex][dashed] (v13) edge  node[below] {$\tau$} (v11);
\draw[-latex][dashed] (v15) edge  node[below] {$\tau$} (v13);
\draw[-latex][dashed] (v17) edge  node[below] {$\tau$} (v15);
\draw[-latex][dashed] (v24) edge  (v22);
\draw[-latex][dashed] (v26) edge  (v24);
\draw[-latex][dashed] (v28) edge  (v26);
\draw[-latex][dashed] (v35) edge  (v33);
\draw[-latex][dashed] (v37) edge  (v35);
\draw[-latex][dashed] (v39) edge  (v37);

\end{tikzpicture}
\endpgfgraphicnamed
\caption{Auslander-Reiten quiver of $\Ind\cD^b(Q)$}
\label{fig:A_3_AR_quiver}
\end{figure}
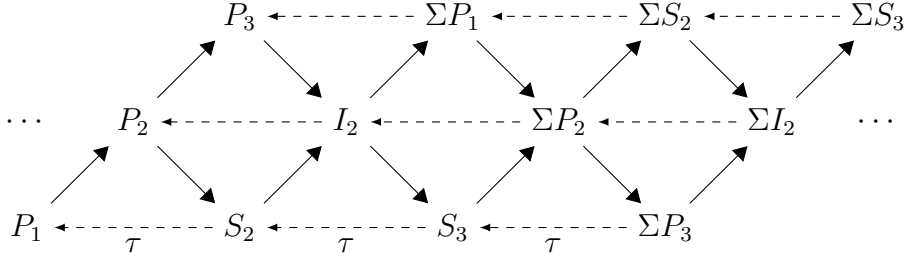

\begin{figure}[htb!]
 \centering
\beginpgfgraphicnamed{A_3_rep_space_module}
\begin{tikzpicture}[scale=0.55]

\node [color=blue] (w10) at (-12,-6) {$W(\sigma S_1)$};
\node [color=blue] (w21) at (-8,-2) {$W(\sigma P_2)$};
\node [color=blue] (w32) at (-4,2) {$W(\sigma P_3)$};
\node [color=blue] (w12) at (-4,-6) {$W(\sigma S_2)$};
\node [color=blue] (w23) at (0,-2) {$W(\sigma I_2)$};
\node [color=blue] (w34) at (4,2) {$W(\sigma \Sigma S_1)$};
\node [color=blue] (w14) at (4,-6) {$W(\sigma S_3)$};

\node [color=blue] (w15) at (8,-2) {$\cdots$};
\node [color=blue] (w25) at (8,-6) {$\cdots$};

\node [] (v11) at (-8,-6) {$V(S_1)$};
\node [] (v22) at (-4,-2) {$V(P_2)$};
\node [] (v33) at (0,2) {$V(P_3)$};
\node [] (v13) at (0,-6) {$V(S_2)$};
\node [] (v24) at (4,-2) {$V(I_2)$};
\node [] (v35) at (8,2) {$V(\Sigma S_1)$};

\node [] (v20) at (-12,-2) {$\cdots$};
\node [] (v31) at (-8,2) {$\cdots$};

\draw[-triangle 60][color=red] (w12) edge  node[above] {$\alpha_1$} (v11);

\draw[-triangle 60][color=red] (w23) edge  node[above] {$\alpha_2$} (v22);

\draw[-triangle 60][color=red] (w34) edge  node[above] {$\alpha_3$} (v33);

\draw[-triangle 60][color=red] (w14) edge  node[above] {$\alpha_1$} (v13);

\draw[-triangle 60][color=blue] (v13) edge node[above] {$\beta_1$} (w12);
\draw[-triangle 60][color=blue] (v11) edge node[above] {$\beta_1$} (w10);

\draw[-triangle 60][color=blue] (v24) edge node[above] {$\beta_2$} (w23);
\draw[-triangle 60][color=blue] (v22) edge node[above] {$\beta_2$} (w21);

\draw[-triangle 60][color=blue] (v35) edge node[above] {$\beta_3$} (w34);
\draw[-triangle 60][color=blue] (v33) edge node[above] {$\beta_3$} (w32);

\draw[-triangle 60] (v24) edge node[above] {$h_1$} (v13);

\draw[-triangle 60] (v22) edge node[above] {$h_1$} (v11);

\draw[-triangle 60] (v35) edge node[above] {$h_2$} (v24);

\draw[-triangle 60] (v33) edge node[above] {$h_2$} (v22);

\draw[-triangle 60] (v13) edge node[above] {$\overline{h_1}$}  (v22);

\draw[-triangle 60] (v24) edge node[above] {$\overline{h_2}$}  (v33);

\end{tikzpicture}
\endpgfgraphicnamed
\caption{Vector space $\Rep^q(Q;v,w)$}
\label{fig:A_3_rep_space_module}
\end{figure}
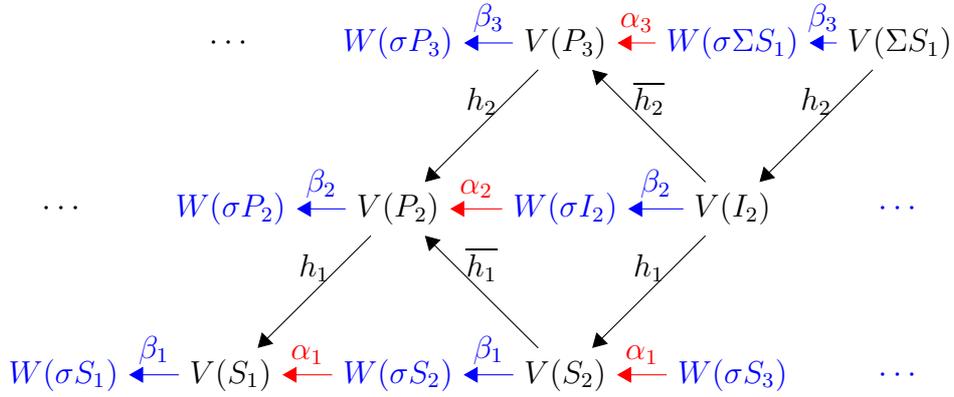

Recall that a pair $(v,w)$ is called \emph{$l$-dominant} if
$w-C_qv\geq 0.$ The vector spaces $W^+$, $V^+$, $W^S$ are defined in \eqref{eq:define_W}. We shall use the following combinatorial property of the $l$-dominant
pairs.

\begin{Thm}[\cite{LP_rep_alg}]\label{thm:abelian_representative}
Assume $q$ is not a root of unity. Then for any $\tw\in W^+$, there exists a unique $l$-dominant pair
$(v,w)\in V^+\times W^S$, such that $w-C_qv=\tw$.
\end{Thm}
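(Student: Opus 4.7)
The plan is to reduce the statement to a triangular linear system arising from the Auslander--Reiten mesh relations in $\Rep(Q)$, and then to solve it recursively while tracking non-negativity.

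First, using the identification of $\sigma \hat I$ with $\Ind \cD^b(Q)$ and the defining formula for the $q$-Cartan matrix, I will establish the key identity
\begin{align*}
C_q e_Y = e_{\sigma Y} + e_{\sigma \tau^{-1} Y} - e_{\sigma E(Y)}
\end{align*}
for every non-injective indecomposable $Y \in \Ind \Rep(Q)$, where $E(Y)$ is the middle term of the Auslander--Reiten sequence $Y \to E(Y) \to \tau^{-1} Y$. The first two summands encode $\sigma Y$ and $\sigma^{-1} Y = \sigma \tau^{-1} Y$ (using $\sigma^2 = \tau$); the last one reflects the fact that the indecomposable summands of $E(Y)$ correspond under the identification to the vertices $(j,a) \in \hat I$ for $j$ adjacent to $i$ in $Q$, where $Y = (i,a)$.

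Next, writing $v = \sum_{Y\ \textrm{non-injective}} m_Y e_Y$, $w = \sum_i c_i e_{\sigma S_i}$ and $\tw = \sum_X n_X e_{\sigma X}$, the equation $\tw = w - C_q v$ translates to a linear system indexed by $X \in \Ind \Rep(Q)$, with the same number of equations as unknowns (both equal to $|\Ind \Rep(Q)|$). I will fix an ordering on $\Ind \Rep(Q)$ such that every AR-successor of $X$, and $\tau^{-1} X$ when defined, precedes $X$ (possible because the AR quiver of $\Rep(Q)$ is finite and acyclic for Dynkin $Q$). Processing equations in this order, each equation introduces exactly one previously undetermined variable: $c_i$ if $X = S_i$, $m_{\tau X}$ if $X$ is non-simple and non-projective, and otherwise an $m_Y$ associated to a distinguished non-injective AR-predecessor $Y$ of $X$. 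A bookkeeping check confirms this gives a bijection between equations and variables, and the resulting triangular system has $\pm 1$ on the diagonal, yielding existence and uniqueness of the $\Q$-rational solution.

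Finally, I will verify that the unique solution satisfies $m_Y, c_i \in \N$. Expanding each recursively in terms of the $n_{X'}$, the apparent negative contributions from $-m_X$ terms cancel with the $+\mu_Y^X m_Y$ mesh contributions via the identity of the first step, leaving each coefficient as a manifestly non-negative $\Z$-combination of the entries of $\tw$. I expect this last step to be the main obstacle: invertibility of the system is mechanical from the triangular structure, but controlling the recursion to guarantee non-negativity requires a careful combinatorial analysis of the cancellations along AR orbits, and this is precisely where the Dynkin $\mathbb A\mathbb D\mathbb E$ hypothesis becomes essential, ensuring both the finiteness of $\Ind \Rep(Q)$ and the existence of a non-injective AR-predecessor for every non-simple indecomposable that can absorb its coefficient.
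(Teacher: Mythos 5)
Your mesh identity $C_q e_Y = e_{\sigma Y}+e_{\sigma\tau^{-1}Y}-e_{\sigma E(Y)}$ (for $Y$ non-injective, $E(Y)$ the middle term of the AR sequence $Y\to E(Y)\to\tau^{-1}Y$) is correct and is a sensible starting point; note also that the paper itself does not prove this statement but cites Leclerc--Plamondon, whose argument goes through repetitive algebras, so any direct mesh-relation proof is necessarily a different route. However, your proposal has two genuine gaps. First, the triangularization scheme fails as stated. Take $Q$ of type $A_2$ ($2\to 1$), with indecomposables $S_1=P_1$, $P_2=I_1$, $S_2=I_2$ and single unknown $m_{S_1}$ besides $c_1,c_2$. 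Your ordering constraints (AR-successors and $\tau^{-1}X$ precede $X$) force the processing order $S_2,\ P_2,\ S_1$, and the equation at $\sigma S_2$ reads $c_2-m_{S_1}=\tilde{w}_{\sigma S_2}$, since $\tau S_2=S_1$. At that moment both $c_2$ and $m_{S_1}$ are undetermined, while your matching assigns $m_{S_1}$ to the later equation at the non-simple projective $P_2$; so the system is not triangular for this order and the recursion cannot proceed. The same mismatch recurs at every non-projective simple vertex (two new unknowns) and at every non-simple injective vertex (zero new unknowns), so the ``one new variable per equation'' bookkeeping needs to be replaced, not just checked. Second, and more importantly, the non-negativity of the solution is the actual content of the theorem, and you explicitly leave it open; gesturing at cancellations along AR orbits is not a proof.

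Both gaps can be closed by exhibiting the solution explicitly rather than solving blindly. For an indecomposable $N$ set $w(N)=\sum_i (\dim N_i)\, e_{\sigma S_i}$ and $v(N)=\sum_{x \text{ non-injective}}\bigl(\dim\Hom(\tau^{-1}x,[N])-\dim\Hom(\tau^{-1}x,N)\bigr)e_x$; this is exactly the map $\iota$ the paper uses in Section \ref{sec:comparison}, where it is proved (by applying $\Hom(-,N)$ and $\Hom(-,S_i)$ to the AR triangles) that $w(N)-C_q v(N)=e_{\sigma N}$. Extending additively over $\tilde{w}=\sum_N n_N e_{\sigma N}$ gives existence, and non-negativity is immediate from left exactness of $\Hom(\tau^{-1}x,-)$, which yields $\dim\Hom(\tau^{-1}x,N)\le\dim\Hom(\tau^{-1}x,[N])$; no cancellation analysis is needed, and this is where no appeal to a delicate Dynkin combinatorics is required beyond finiteness of $\Ind\Rep(Q)$. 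Uniqueness then follows from linear algebra: the map $(v,w)\mapsto w-C_q v$ is a $\Z$-linear map between free $\Z$-modules of the same finite rank $|\Ind\Rep(Q)|$, and it is surjective because its image contains every $e_{\sigma N}$; hence it is injective, so the $l$-dominant pair is unique. If you want to keep your recursive solution instead, you must redesign the equation-to-variable matching (your $A_2$ case already shows the natural orders fail) and still supply a genuine positivity argument, which your current write-up does not contain.
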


\section{Grothendieck rings arising from cyclic quiver varieties}

\subsection{Constructions and main results}
\label{sec:construction}

We follow the conventions in section \ref{sec:preliminaries}. Let $h$
denote the Coxeter number of the Dynkin type of the quiver $Q$. We
make the following convention from now on:
\begin{center}
  Fix $\epsilon$ to be a $2h$-th primitive root
  of unity and, by default, take $q$ to be $\epsilon$.
\end{center}

It follows that the automorphism $\tau^{h}$ on $I\times \langle
\epsilon\rangle$ equals $1$. The subset
$\sigma\hat{I}$ of $I\times \langle
\epsilon\rangle$ has the cardinality $nh$.

Let us choose the natural covering map $\pi$ from the set $\Ind
\cD^b(Q)$ to $\sigma\hat{I}$ which sends the $i$-th projective $P_i$
to $(i,\epsilon\xi_i)$ and commutes with $\tau$, namely,
$\tau\pi(M)=\pi(\tau M)$.

Choose a section $M_?$ of this
covering map $\pi$, such that $\pi M_x=x$ for any $x\in
\sigma\hat{I}$. We further require that the image of $\sigma\hat{I}$
under $M_?$ is contained in $(\Ind \Rep(Q))\sqcup
(\Sigma(\Ind\Rep(Q)))$. When the context is clear, we simply denote
$M_x$ by $x$ and omit the notation of the covering map $\pi$.

Notice that the image of the section map $M_?$ is not closed
under $\tau$ nor $\Sigma$. 

\begin{Eg}
In Example \ref{eg:A_3_quiver}, we can take $q=\epsilon$ to be a
primitive $8$-th root of the unity. Then the vertices $\sigma \hat{I}$
take the form $(i,q^{i+2d})$, $i\in I$, $d\in\{0,1,2,3\}$. We can
construct the section
map from $\sigma\hat{I}$ to $\Ind\cD^b(Q)$ which sends $(i,q^{i+2d})$
to $\tau^{-d}P_i$ (these are the objects already drawn in Figure \ref{fig:A_3_AR_quiver}).
\end{Eg}

The shift functor $\Sigma$ induces an automorphism $\Sigma$ on the set $\Ind
\cD^b(Q)$. It is inherited by $\sigma\hat{I}$. We extend
this automorphism $\Sigma$ to $I\times \langle \epsilon\rangle$ by
requiring $\Sigma \sigma=\sigma\Sigma$. It follows that $\Sigma^2=1$.

Let $W^+$, $V^+$,
$W^S$ be defined as in \eqref{eq:define_W}. We also define
\begin{align}
  \begin{split}
    W^-=\Sigma^* W^+,\\
    V^-=\Sigma^* V^-,\\
    W^{\Sigma S}=\Sigma^* W^S.
  \end{split}
\end{align}

For any $i\in I$, we define 
\begin{align}\label{eq:define_vf}
  \begin{split}
  w^{f_i}&=e_{\sigma S_i}+e_{\sigma\Sigma S_i},\\
W^0&=\oplus_{i\in I}\N w^{f_i},\\
v^{f_i}&=\sum_{x\in \sigma\hat{I}}\dim\Hom_{\cD^b(Q)}(S_i,M_x)
e_x,\\
v^{\Sigma f_i}&=\Sigma^* v^{f_i},\\
V^0&=\oplus_{i\in I} (\N v^{f_i}\oplus \N v^{\Sigma f_i}).
\end{split}
\end{align}

Following section \ref{sec:quiver_variety}, we consider the Grothendieck group $K=\oplus_{w\in W^S\oplus W^{\Sigma S}} K_w$. Its
$\N^{\hat{I}}$-graded dual $R=\oplus_{w\in W^S\oplus W^{\Sigma S}}R_w$ has the
multiplication $\tOtimes$ induced by the comultiplication $\tRes$ of
$K$. 

It follows from \cite{VaragnoloVasserot03} that we have
\begin{align}
  \label{eq:multiplication_can}
  \can(v^1,w^1)\tOtimes\can(v^2,w^2)=\sum_v c^v_{v^1,v^2}(t)\can(v,w^1+w^2),
\end{align}
such that $c^v_{v^1,v^2}(t)\in\N[t^\pm]$, $c^v_{v^1,v^2}=0$ whenever
$v<v^1+v^2$, and
$c^{v^1+v^2}_{v^1,v^2}=t^{d((v^2,w^2),(v^1,w^1))-d((v^1,w^1),(v^2,w^2))}$. The
term $c^{v_1+v_2}_{v^1,v^2}(t)\can(v_1+v_2,w^1+w^2)$ is called the leading term of $\RHS$ of \eqref{eq:multiplication_can}.

\begin{Prop}[{\cite[Theorem 7.3]{HernandezLeclerc11}}]\label{prop:generator}
(1) $R^+=\oplus_{w\in W^S}R_w$ is the $\Z[t^\pm]$-algebra generated by $\{\can(0,e_{\sigma
  S_i}),\ i\in I\}$ with respect to the product $\tOtimes$.

(2) $R^-=\oplus_{w\in W^{\Sigma
    S}}R_w$ is the $\Z[t^\pm]$-algebra generated by $\{\can(0,e_{\sigma \Sigma S_i}),\ i\in
I\}$ with respect to the product $\tOtimes$.
\end{Prop}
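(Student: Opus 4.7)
My plan is to prove part (1) by a double induction, then deduce part (2) by symmetry. Let $A^+ \subseteq R^+$ denote the $\Z[t^\pm]$-subalgebra generated by the elements $\can(0, e_{\sigma S_i})$, $i \in I$. Since the family $\{\can(v,w)\}$ indexed by $l$-dominant pairs $(v,w) \in V^+ \times W^S$ with $\cycRegStratum(v,w) \neq \emptyset$ is a $\Z[t^\pm]$-basis of $R^+$, it suffices to show that every such $\can(v,w)$ lies in $A^+$. I will induct on the height $|w| = \sum_{(i,a)} w(i,a)$.

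For the base case $|w| = 1$ one has $w = e_{\sigma S_i}$. Applying Theorem \ref{thm:abelian_representative} to $\tw = e_{\sigma S_i} \in W^+$ shows that $v = 0$ is the unique $v \in V^+$ with $e_{\sigma S_i} - C_\epsilon v \geq 0$, so $\can(0, e_{\sigma S_i})$ is the unique basis element of $R_{e_{\sigma S_i}}$ and trivially lies in $A^+$. For the inductive step $|w| \geq 2$, fix a nontrivial decomposition $w = w^1 + w^2$ in $W^S$ with both $w^j \neq 0$. The set of $l$-dominant $v$'s for this fixed $w$ is finite by Proposition \ref{prop:finite_dominant}, and carries the partial order induced by $\leq$, so I perform a downward induction on $v$ within this poset. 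For fixed $v$, Theorem \ref{thm:abelian_representative} provides a compatible decomposition $\tw = \tw^1 + \tw^2$ of $\tw = w - C_\epsilon v$ in $W^+$ corresponding to $l$-dominant pairs $(v^j, w^j) \in V^+ \times W^S$ satisfying $v^1 + v^2 = v$ and $w^1 + w^2 = w$. Plugging into the multiplication formula \eqref{eq:multiplication_can} gives
\begin{align*}
  \can(v^1, w^1) \tOtimes \can(v^2, w^2) = t^{e} \can(v, w) + \sum_{v' > v} c^{v'}_{v^1, v^2}(t)\, \can(v', w),
\end{align*}
where $e = d((v^2,w^2),(v^1,w^1)) - d((v^1,w^1),(v^2,w^2))$. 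Since $t^{e}$ is invertible in $\Z[t^\pm]$, solving for $\can(v,w)$ expresses it as a $\Z[t^\pm]$-combination of the product (which lies in $A^+$ by the outer induction on $|w|$, as $|w^j| < |w|$) and the terms $\can(v',w)$ with $v' > v$ (which lie in $A^+$ by the inner downward induction).

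The main obstacle is producing, for every $l$-dominant $(v,w)$ with $|w| \geq 2$, a decomposition $v = v^1+v^2$ and $w = w^1+w^2$ with each $(v^j, w^j)$ again $l$-dominant and $w^j \in W^S$; this is exactly what Theorem \ref{thm:abelian_representative} guarantees through the additivity of the unique representative $(v, w) \leftrightarrow \tw = w - C_\epsilon v$ once one fixes any splitting $\tw = \tw^1 + \tw^2$ in $W^+$ compatible with a nontrivial splitting $w = w^1 + w^2$ in $W^S$.

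Part (2) then follows by applying part (1) to the opposite quiver $Q^{op}$, or equivalently by transporting along the involution $\Sigma$, which exchanges $W^S \leftrightarrow W^{\Sigma S}$ and $\can(0, e_{\sigma S_i}) \leftrightarrow \can(0, e_{\sigma \Sigma S_i})$ while preserving the product $\tOtimes$ up to the symmetry of the bilinear form $d(\,,\,)$ in \eqref{eq:dimension}. In spirit this is precisely \cite[Theorem 7.3]{HernandezLeclerc11}: under the Hernandez-Leclerc identification of $R^+$ with $\Ut(\mfr{n}^+)$, the elements $\can(0, e_{\sigma S_i})$ correspond (up to rescaling) to the Chevalley generators $E_i$, and the statement reduces to the fact that $\Ut(\mfr{n}^+)$ is generated by the $E_i$'s.
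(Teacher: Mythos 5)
Your reduction to the leading term of \eqref{eq:multiplication_can} has a genuine gap at the key combinatorial step. It is true that the representative map of Theorem \ref{thm:abelian_representative} is additive (a sum of $l$-dominant pairs is $l$-dominant, so uniqueness forces additivity), but this does not give you what you need: not every $l$-dominant pair $(v,w)\in V^+\times W^S$ with $|w|\ge 2$ splits into two nonzero $l$-dominant pairs. The obstruction is that $\tw=w-C_qv$ may be an \emph{atom} of $W^+$, namely $\tw=e_{\sigma N}$ for a single non-simple indecomposable $N$, and atoms admit no nontrivial splitting in $W^+$; correspondingly $(v,w)$ admits no splitting at all. Concretely, in type $A_2$ take $(v,w)=(e_{S_1},e_{\sigma S_1}+e_{\sigma S_2})=\iota(P_2)$: the only nontrivial splittings of $w$ would require $(e_{S_1},e_{\sigma S_1})$ or $(e_{S_1},e_{\sigma S_2})$ to be $l$-dominant, and neither is, since $C_qe_{S_1}=e_{\sigma S_1}+e_{\sigma S_2}-e_{\sigma P_2}$ makes $w^j-C_qe_{S_1}$ have a negative entry. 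These are exactly the basis elements that never occur as leading terms of products of smaller basis elements; they occur only as lower-order terms, e.g.\ $\can(e_{S_1},w')$ appears with coefficient $Bt^{\delta}$ in $\can(0,e_{\sigma S_1})\otimes\can(0,e_{\sigma S_2})$ in the proof of Proposition \ref{prop:Serre_relation}. Hence your ``solve for the leading term'' step can never reach them and the double induction stalls precisely there. (A further problem is integrality: if you instead try to extract such an element as a non-leading term from the two orderings of the product, the resulting $2\times 2$ system has determinant of the shape $t^{a}-t^{-a}$, which is not a unit of $\Z[t^{\pm}]$, so generation over $\Z[t^{\pm}]$ --- which is what the Proposition asserts --- does not follow by naive linear algebra; this is where the real content of \cite[Theorem 7.3]{HernandezLeclerc11} lies.)

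Two smaller points. First, Theorem \ref{thm:abelian_representative} is stated for $q$ not a root of unity, whereas $R$ lives over cyclic quiver varieties at $q=\epsilon$; to use it (and Proposition \ref{prop:w_gr_elements}) you must first transfer between the two settings, which is exactly the role of Proposition \ref{prop:local_change}. This is in fact how the paper argues: it proves $\cycAffQuot(w)\cong\grAffQuot(w)$ for $w\in W^+$ and then imports the generation statement from \cite[Theorem 7.3]{HernandezLeclerc11} (together with the Serre relations and a graded-dimension comparison), rather than re-proving generation by an induction on $l$-dominant pairs. If you want a self-contained proof you would have to supply an argument covering the atomic pairs $\iota(N)$, $N$ indecomposable, which is essentially the nontrivial part of Hernandez--Leclerc's theorem. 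Second, your deduction of part (2) from part (1) via the involution $\Sigma^*$ is fine, and is consistent with how the paper uses $\Sigma^*$-induced isomorphisms elsewhere.
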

We also define $R^0$ to be the algebra generated by
\begin{align}
  \{\can(v^{f_i},w^{f_i}),\can(v^{\Sigma f_i},w^{f_i}),i\in I\}.
\end{align}
We will call $\can(0,e_{\sigma
  S_i})$, $\can(v^{f_i},w^{f_i})$, $\can(v^{\Sigma f_i},w^{f_i})$, $\can(0,e_{\sigma\Sigma
  S_i})$, $i\in I$, the \emph{Chevalley generators} of the
Grothendieck ring $R$. 

\begin{Rem}
  The generators $\can(0,e_{\sigma S_i})$, $\can(0,e_{\sigma\Sigma
    S_i})$ should be compared with the generators $y_{i,0}$, $y_{i,1}$
  in \cite[Theorem 7.3]{HernandezLeclerc11} for derived categories
  respectively. We shall show that the relation (R1) in
  \cite[Theorem 7.3]{HernandezLeclerc11} holds for our generators. But
  the relation (R2) does not hold in our case. See Example
  \ref{eg:compare_product} for more details.
\end{Rem}

Let us use $(\ )_{t^\Hf}$ and $(\ )_{\Q(t^\Hf)}$ to denote the
extensions $(\ )\otimes \Z(t^\Hf)$ and $(\ )\otimes
\Q(t^\Hf)$ respectively.

Let $\Phi$ be the linear
map from $\N^{\hat{I}}$ to the Grothendieck group $K_0(\Rep(Q))\oplus K_0(\Sigma(\Rep(Q)))$ such
that $\Phi(e_{\sigma x})=x$. For any elements $x=(x^1,x^2)$, $y=(y^1,y^2)\in K_0(\Rep(Q))\oplus
  K_0((\Sigma\Rep(Q)))$, define the following bilinear forms as
  combinations of the Euler forms
  \begin{align}\label{eq:Euler_twist}
    \langle x,y\rangle _a=\langle x^1,y^1\rangle-\langle y^1,x^1\rangle+\langle
    x^2,y^2\rangle -\langle y^2,x^2\rangle,\\
(x,y)=\langle x^1,y^1\rangle+\langle y^1,x^1\rangle+\langle
    x^2,y^2\rangle +\langle y^2,x^2\rangle.
  \end{align}
 
 Following the convention in Section \ref{sec:quiver_variety}, for
  any $w=w^1+w^2$, $\tRes^{w}_{w^1,w^2}$ is a homomorphism from
  $(K_w)_{t^\Hf}$ to  $(K_w)_{t^\Hf}\otimes_{\Z[t^{\pm\Hf}]} (K_w)_{t^\Hf}$.
  Define\footnote{This choice of the degree arises from the comparison
    of the equations in Proposition \ref{prop:EK} with the defining relations of
    $\tUt(\mfr{g})$. It is also an anti-symmetrized version of the
    twist used by \cite{Bridgeland:Hall}.} its deformation $\res^{w}_{w^1,w^2}$ to be
  $\tRes^w_{w^1,w^2}t^{-\Hf\langle \Phi(w^1),\Phi(w^2) \rangle_a}$. Then we
  obtain a (coassociative) comultiplication $\res$ on $K_{t^\Hf}$ and correspondingly a
  multiplication $\otimes$ on $R_{t^\Hf}$. We compare it with the
  twisted products in \cite{Hernandez02}\cite{HernandezLeclerc11}\cite{Nakajima01} in Example \ref{eg:compare_product}.

For any $w\in W^+$, $\Phi(w)$ can be viewed as a $\C
Q$-module. We define $\deg\Phi(w)$ to be the total dimension of
$\Phi(w)$, and the bilinear form
\begin{align}
  N(\Phi(w))&=( \Phi(w),\Phi(w))-\deg\Phi(w).
\end{align}

Let
$B^*_K=\set{B^*_K(w)|w\in W^+}$ denote the \emph{dual canonical
basis} of $U_t(\mfr{n}^+)$ with respect to Kashiwara's linear
form $(\ ,\ )_K$. Define the rescaled dual
canonical basis $\tB^*_K$ to be $\set{\tB^*_K(w)|w\in W^+}$ such that $\tB^*_K(w)=t^{\Hf N(\Phi(w))}B^*_K(w)$.

The following was the main result of \cite{HernandezLeclerc11} for
graded quiver varieties with a generic choice of $q$.
\begin{Thm}[{One-half quantum group, \cite[Theorem 6.1]{HernandezLeclerc11}}] \label{thm:isom_coordinate_ring}
(1)  There exists an algebra isomorphism $\tKappa$ from the Grothendieck ring $(R^+_{\Q(t^\Hf)},\otimes)$ to the
   one-half quantum group $\Ut(\mfr{n}^+)_{\Q(t^\Hf)}$, such that
   $$\tKappa\can(0,e_{\sigma S_i})=E_i,\ \forall i\in I$$.

(2) This isomorphism identifies the basis $\{\can(v,w),w\in W^S\}$ with
   the rescaled dual canonical basis $\tB^*_K$ such that $\tKappa(\can(v,w))=\tB_K^*(w-C_qv)$.
\end{Thm}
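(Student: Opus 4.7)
The plan is to reduce the statement to the graded-quiver case of Hernandez--Leclerc by showing that the restriction of the cyclic setup to the ``positive'' part $W \in W^S$ is canonically identified with the graded one, and then apply their theorem.

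First, I would verify that the elements $\can(0,e_{\sigma S_i})$, $i \in I$, satisfy the quantum Serre relations. Using Proposition \ref{prop:generator}, it suffices to compute products of these generators. The twisted multiplication $\otimes$ on $R^+_{t^\Hf}$ is obtained from $\tRes$ by the factor $t^{-\Hf\langle\Phi(w^1),\Phi(w^2)\rangle_a}$, which is the anti-symmetrized Euler form on the classes $\Phi(w^k) \in K_0(\Rep(Q))$. Combining this twist with the restriction formula \eqref{eq:restriction_pi}--\eqref{eq:dimension} yields quadratic and cubic identities matching exactly the Serre relations of $\Ut(\mfr{n}^+)$. This defines a surjective algebra map $\tKappa : \Ut(\mfr{n}^+)_{\Q(t^\Hf)} \to R^+_{\Q(t^\Hf)}$ sending $E_i \mapsto \can(0,e_{\sigma S_i})$.

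Second, I would establish the cyclic-to-graded comparison. The key point is that for $w\in W^S$, every $l$-dominant pair $(v,w)$ appearing in the stratification of $\cycAffQuot(w)$ lies in $V^+\times W^S$, and the map $(v,w)\mapsto w-C_\epsilon v$ gives a bijection to $W^+$. This is Theorem \ref{thm:abelian_representative} transferred to the cyclic setting: since $\epsilon$ is a primitive $2h$-th root of unity and the sections $M_x$ of the covering $\pi:\Ind\cD^b(Q)\to\sigma\hat{I}$ supporting $V^+,W^S$ fit within one period of $\tau$, the relevant slices of $\cycProjQuot(v,w)$ agree with those of $\grProjQuot(v,w)$ for the generic $q$ used in \cite{HernandezLeclerc11}. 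Consequently, the decomposition \eqref{eq:decomposition_pi} and the restriction formula \eqref{eq:restriction_pi} are unchanged, so the Grothendieck ring $R^+_{\Q(t^\Hf)}$ is canonically identified with its graded counterpart as an algebra. Through this identification, Hernandez--Leclerc's isomorphism for graded quiver varieties yields the algebra isomorphism $\tKappa$.

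Third, for injectivity I would compare graded dimensions: using Theorem \ref{thm:abelian_representative}, the basis $\{\can(v,w):w\in W^S,\ (v,w)\ l\text{-dominant}\}$ of $R^+$ is parametrized by $W^+$, which matches the $\Q(t^\Hf)$-basis of $\Ut(\mfr{n}^+)_{\Q(t^\Hf)}$ coming from the dual canonical basis $B^*_K$. Combined with surjectivity, this yields (1). For part (2), I would exploit the characterization of $B^*_K$ by bar-invariance and unitriangularity: the IC sheaves $\can(v,w)=IC(\cycRegStratum(v,w))$ are Verdier self-dual (bar-invariance) and by \eqref{eq:decomposition_pi} they are unitriangular with respect to the PBW-type basis $\{\pi(v,w)\}$ with coefficients in $\N[t^\pm]$ symmetric in $t\leftrightarrow t^{-1}$. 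Matching these to the analogous characterization of $B^*_K$ in $\Ut(\mfr{n}^+)$ forces $\tKappa(\can(v,w)) = t^{\Hf N(\Phi(w-C_\epsilon v))}B^*_K(w-C_\epsilon v) = \tB^*_K(w-C_\epsilon v)$, provided the rescaling exponent $\Hf N(\Phi(w))$ agrees with the discrepancy between Hernandez--Leclerc's normalization of generators $y_{i,0}$ and the present $\can(0,e_{\sigma S_i})$; this is a direct computation using the symmetric Euler form $(\ ,\ )$ on the $K_0$ of the projective/shifted-projective parts.

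The main obstacle is the second step: one must check rigorously that the transverse slices at points of $\cycRegStratum(v,w)$ for $(v,w)\in V^+\times W^S$ are isomorphic to those in the graded setting, so that the local decomposition coefficients $a_{v,v';w}(t)$ and the restriction numbers are unchanged. Once this is established, the Serre relations, the identification of bases, and the rescaling factor all transport cleanly from the graded case treated in \cite{HernandezLeclerc11} (where the parallel statement is Theorem 6.1 combined with \cite[Section 9]{HernandezLeclerc11}).
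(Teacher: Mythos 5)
Your overall strategy is the same as the paper's: check the quantum Serre relations for the generators $\can(0,e_{\sigma S_i})$ (this is Proposition \ref{prop:Serre_relation}) to obtain a surjection from $\Ut(\mfr{n}^+)_{\Q(t^\Hf)}$ onto $R^+_{\Q(t^\Hf)}$, reduce the cyclic situation to the graded one, compare graded dimensions for injectivity, and transfer part (2) from \cite{HernandezLeclerc11}. The genuine gap is exactly at the step you yourself call ``the main obstacle'': the identification of the cyclic data with the graded data for $w\in W^S$ is asserted, not proved, and the heuristic you give --- that the sections $M_x$ supporting $V^+$ and $W^S$ ``fit within one period of $\tau$'' --- is not sufficient. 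The varieties $\cycAffQuot(v,w)$ are cut out by the mesh relations of the \emph{cyclic} mesh (Nakajima) category, in which there a priori exist extra morphisms obtained by composing irreducible maps all the way around the cyclic Auslander--Reiten quiver (the ``wrapping paths'' of Example \ref{eg:wrapping_path_free}); such compositions could enlarge $\cycAffQuot(w)$ beyond $\grAffQuot(w)$ and alter both the decomposition coefficients $a_{v,v';w}(t)$ in \eqref{eq:decomposition_pi} and the restriction numbers, even though $v$ and $w$ are supported in a single fundamental domain. What has to be shown is that every wrapping composition between objects of $\sigma\hat I$ factors through $\sigma\Sigma\Ind\Rep(Q)$ and therefore vanishes, i.e.\ that the singular Nakajima category at $\epsilon$ is equivalent to the generic one; this is precisely the content and the proof of Proposition \ref{prop:local_change}, carried out in the paper via the Nakajima categories of \cite{KellerScherotzke2013}, and your argument presupposes it without supplying it.

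Modulo that lemma, the rest of your outline goes through and matches the paper: the dimension count (you use Theorem \ref{thm:abelian_representative}, the paper uses Proposition \ref{prop:w_gr_elements}; both rest on Proposition \ref{prop:local_change}) yields injectivity, and your bar-invariance/unitriangularity argument for part (2) is a legitimate expansion of what the paper handles by citing the proof of Theorem 6.1(2) of \cite{HernandezLeclerc11}, including the bookkeeping of the rescaling exponent $\Hf N(\Phi(w))$.
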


\begin{Thm}[Triangular decomposition]\label{thm:triangular_decomposition}
  The ring $(R,\otimes)$ (\resp $(R,\tOtimes)$) decomposes into the tensor product of its subalgebras:
  \begin{align*}
    R=R^+\otimes_\tBase R^0\otimes_\tBase R^-.
  \end{align*}
\end{Thm}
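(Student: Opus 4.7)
The plan is to establish both decompositions simultaneously. Since the two products $\otimes$ and $\tOtimes$ differ only by scalar factors $t^{-\Hf\langle\Phi(w^1),\Phi(w^2)\rangle_a}$ that depend on the $w$-grading and not on individual basis elements within a graded piece, the subalgebras $R^\pm$ and $R^0$ are stable under either product, and a triangular decomposition for one yields the other; so I would work with $\tOtimes$.

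The main tool is the upper-triangular multiplication formula \eqref{eq:multiplication_can}: the product $\can(v^1, w^1) \tOtimes \can(v^2, w^2)$ has leading term $t^{N_{12}} \can(v^1+v^2, w^1+w^2)$ with $t^{N_{12}}$ a unit in $\tBase$, plus a $\tBase$-combination of $\can(v', w^1+w^2)$ for $v' \neq v^1+v^2$. For surjectivity of the multiplication map $\Psi : R^+ \otimes_\tBase R^0 \otimes_\tBase R^- \to R$, I induct on the partial order on $v$ with $w$ fixed. Given a basis element $\can(v, w)$, I decompose $w = w^S + w^{\Sigma S}$ into its $W^S$ and $W^{\Sigma S}$ components, and then construct a triple consisting of an l-dominant pair $(v^+, w^+) \in V^+ \times W^S$, a Cartan monomial $\can(v^0, w^0)$ in $R^0$, and an l-dominant pair $(v^-, w^-) \in V^- \times W^{\Sigma S}$ whose $v$'s sum to $v$ and whose $w$'s assemble to $w$. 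By Theorem \ref{thm:abelian_representative} applied to the positive part, and its $\Sigma$-shifted analogue on the negative side, such l-dominant representatives exist; the Cartan factor absorbs the remaining simple-root slack via the explicit vectors $v^{f_i}, v^{\Sigma f_i}$ defined in \eqref{eq:define_vf}. The product of the corresponding generators has leading term $\can(v, w)$ up to a unit in $\tBase$, and the inductive hypothesis handles the non-leading terms.

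For injectivity of $\Psi$, I would suppose $\sum_j c_j \, \can(v^+_j, w^+_j) \tOtimes \can(v^0_j, w^0_j) \tOtimes \can(v^-_j, w^-_j) = 0$ in $R$, where the middle factors range over a $\tBase$-basis of $R^0$. The expression is homogeneous in $w$, so I fix a graded piece; within it, the triples may be ordered so that the leading term of each product isolates a distinct basis element $\can(v, w)$ of $R$, and the invertibility of the leading coefficients from \eqref{eq:multiplication_can} then forces each $c_j = 0$.

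The hard part will be the analysis of $R^0$: I need an explicit $\tBase$-basis of $R^0$, together with a precise description of the contribution of its elements to the $(v, w)$-grading, so that the Cartan slack exploited in surjectivity is exactly matched by the degrees of freedom counted in the injectivity argument. This reduces to checking that the generators $\can(v^{f_i}, w^{f_i})$ and $\can(v^{\Sigma f_i}, w^{f_i})$ produce, under the multiplication \eqref{eq:multiplication_can}, leading terms that distinguish distinct monomials; this is in turn a direct computation using the bilinear form $d(\ ,\ )$ together with the formulas defining $v^{f_i}$ and $w^{f_i}$.
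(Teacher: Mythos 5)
Your skeleton is the same as the paper's (leading-term triangularity of \eqref{eq:multiplication_can} plus induction on $v$ within a fixed $w$-graded piece, with $\otimes$ and $\tOtimes$ interchangeable since they differ by a unit depending only on the $w$-grading), but the heart of the argument is missing, and you have misplaced where the difficulty lies. Both your surjectivity and your injectivity steps silently require the following statement, which is the paper's Proposition \ref{prop:dominant_decomposition}: every $l$-dominant pair $(v,w)$ with $w\in W^S\oplus W^{\Sigma S}$ and $v\in\N^{\sigma\hat{I}}$ arbitrary decomposes \emph{uniquely} as a sum of $l$-dominant pairs $(v^+,w^+)\in V^+\times W^S$, $(v^0,w^0)$ with $v^0\in V^0$, $w^0\in W^0$, $w^0-C_qv^0=0$, and $(v^-,w^-)\in V^-\times W^{\Sigma S}$. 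Theorem \ref{thm:abelian_representative} does not give you this: it only produces, for a prescribed dominant weight $\tw\in W^+$, some $l$-dominant representative $(v^+,w^+)$; it says nothing about whether the leftover ``Cartan slack'' $(v,w)-(v^+,w^+)-(v^-,w^-)$ lies in the span of the vectors $(v^{f_i},w^{f_i})$, $(v^{\Sigma f_i},w^{f_i})$ of \eqref{eq:define_vf} (this is not automatic, since at the root of unity $\epsilon$ the map $C_q$ is not injective), nor — crucially — whether its coefficients are \emph{nonnegative}, which you need because $R^0$ is generated by the Cartan elements without inverses. Your phrase ``the Cartan factor absorbs the remaining simple-root slack'' is exactly the point that has to be proved.

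In the paper this positivity is extracted from Proposition \ref{prop:dominant_combinatorics}: if $v\in V^+_\Z$, $w\in W^S_\Z$ and $w-C_qv\geq 0$, then already $v\in V^+$ and $w\in W^S$. Its proof is not a leading-term computation with the form $d(\ ,\ )$; it goes through Proposition \ref{prop:local_change} and the repetitive-algebra description of $l$-dominant pairs in \cite{LP_rep_alg}. By contrast, the part you flag as ``the hard part'' — an explicit $\tBase$-basis of $R^0$ and the fact that distinct Cartan monomials have distinct $(v^0,w^0)$-labels — is easy: it follows from \eqref{eq:injective_coeff} (linear independence of the $v^{f_i},v^{\Sigma f_i}$ via their coordinates at injectives) together with Proposition \ref{prop:KK}. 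Finally, note that your injectivity argument needs the \emph{uniqueness} of the triple decomposition (otherwise two distinct basis triples could share the same leading term $\can(v,w)$ and your ordering argument breaks down); this uniqueness comes from Lemma \ref{lem:decomposition_V}, which you never invoke. You also need the finiteness of the set of $v$ with $(v,w)$ $l$-dominant (Proposition \ref{prop:finite_dominant}) for the induction on $v$ to terminate. With Propositions \ref{prop:dominant_combinatorics} and \ref{prop:dominant_decomposition} supplied, your argument becomes the paper's proof; without them it has a genuine gap.
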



\begin{Thm}\label{thm:isom}
(i) There exists an algebra isomorphism $\kappa$ from $(R_{
  \Q(t^\Hf)},\otimes)$ to $\tUt(\mfr{g})_{\Q(t^\Hf)}$, such that we have
$\tUt(\mfr{n}^+)_{\Q(t^\Hf)}= \kappa R^+_{\Q(t^\Hf)}$,
$ \tUt(\mfr{h})_{\Q(t^\Hf)}=\kappa R^0_{\Q(t^\Hf)}$, $ \tUt(\mfr{n}^-)_{\Q(t^\Hf)}=\kappa R^-_{\Q(t^\Hf)}$, and for any $i\in I$,
\begin{align*}
\kappa(t^\Hf)&=t^\Hf,\\
\kappa\can(0,e_{\sigma S_i})&=\frac{1-t^2}{t}E_i,\\
\kappa\can(v^{\Sigma f_i},w^{f_i})&=K_i,\\
\kappa\can(v^{f_i},w^{f_i})&=K_i',\\
 \kappa \can(0,e_{\sigma \Sigma S_i})&= \frac{t^2-1}{t}F_i.
\end{align*}
(ii)
  Let $I$ be the ideal of $(R,\otimes)$ generated by the center
  elements $\can(v^{f_i}+v^{\Sigma f_i},2 w^{f_i})-1$, $i\in I$. Then the map $\kappa$
  induces an isomorphism between the quotient ring $R/I$ and the
  quantum group $\Ut(\mfr{g})$.
\end{Thm}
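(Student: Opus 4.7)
The plan is to use the triangular decomposition of Theorem \ref{thm:triangular_decomposition} as the scaffold: the positive half is supplied by Theorem \ref{thm:isom_coordinate_ring} once one tracks the rescaling factor $\frac{1-t^2}{t}$, which interpolates between Kashiwara's and Lusztig's forms in light of \eqref{eq:compare_bilinear_form}; the negative half $R^-$ is obtained by the $\Sigma$-symmetry identifying $W^{\Sigma S}$ with $W^S$. So only the Cartan subalgebra $R^0$ and the mixed cross-relations need to be checked by hand. For $R^0$, the data $(v^{f_i},w^{f_i})$ and $(v^{\Sigma f_i},w^{f_i})$ are constructed precisely so that each corresponding regular stratum is a single point (via the transverse-slice description \cite[3.3.2]{Nakajima01}, this is the stratum $\{0\}$ alluded to in the introduction). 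Hence the leading-term formula \eqref{eq:multiplication_can} shows that these classes multiply under $\tOtimes$ without correction, and the antisymmetry of the twist $t^{-\Hf\langle\Phi(\cdot),\Phi(\cdot)\rangle_a}$ gives $[K_i,K_j]=[K_i,K_j']=[K_i',K_j']=0$ and the algebraic independence that identifies $R^0$ with $\tUt(\mfr{h})$.

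The cross-relations $K_iE_j=t^{a_{ij}}E_jK_i$ and its three variants are then direct computations: the restriction formula \eqref{eq:restriction_pi} applied to $\pi(v^{\Sigma f_i},w^{f_i})\boxtimes\pi(0,e_{\sigma S_j})$ contains a single summand, and the twist exponent $\langle\Phi(w^{f_i}),\Phi(e_{\sigma S_j})\rangle_a$ computed via \eqref{eq:Euler_twist} from the Euler form of $Q$ gives precisely $\pm a_{ij}$. The main technical obstacle is the commutator $[E_i,F_j]=\delta_{ij}(K_i-K_i')/(t-t^{-1})$. One has to compute both orders of the product $\can(0,e_{\sigma S_i})\otimes\can(0,e_{\sigma\Sigma S_j})$ from \eqref{eq:multiplication_can}, identify (using Theorem \ref{thm:abelian_representative}) which $l$-dominant pairs $(v,w)$ can contribute with $w=e_{\sigma S_i}+e_{\sigma\Sigma S_j}$, and show that for $i\neq j$ the two orders coincide once the antisymmetric twist is included, while for $i=j$ the difference collapses to $\can(v^{\Sigma f_i},w^{f_i})-\can(v^{f_i},w^{f_i})$ up to the expected scalar involving $(1-t^2)^2/(t^2(t-t^{-1}))$. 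I expect this to reduce to an explicit description of the relevant low-dimensional smooth cyclic quiver varieties as points or projective lines, parallel to the computations underlying \cite[Section 9]{HernandezLeclerc11}.

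With all defining relations of $\tUt(\mfr{g})$ verified, $\kappa$ is a well-defined algebra homomorphism; surjectivity is immediate from Proposition \ref{prop:generator} together with the generation of $R^0$, and injectivity follows by comparing the triangular decomposition of $R_{\Q(t^\Hf)}$ with that of $\tUt(\mfr{g})_{\Q(t^\Hf)}$ factor by factor, since the $R^\pm$ factor is already known to be isomorphic to $\Ut(\mfr{n}^\pm)_{\Q(t^\Hf)}$ and the $R^0$ factor to $\tUt(\mfr{h})_{\Q(t^\Hf)}$.

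For part (ii), the product $\can(v^{f_i},w^{f_i})\tOtimes\can(v^{\Sigma f_i},w^{f_i})$ is again controlled by \eqref{eq:multiplication_can}: the two data are $\Sigma$-symmetric, so the exponent $d((v^{\Sigma f_i},w^{f_i}),(v^{f_i},w^{f_i}))-d((v^{f_i},w^{f_i}),(v^{\Sigma f_i},w^{f_i}))$ vanishes, and a direct check using Theorem \ref{thm:abelian_representative} shows that no strictly smaller $l$-dominant pair $(v,w)$ with $w=2w^{f_i}$ can contribute, so the product equals $\can(v^{f_i}+v^{\Sigma f_i},2w^{f_i})$. Applying $\kappa$ yields $K_iK_i'$, so quotienting by the central elements $\can(v^{f_i}+v^{\Sigma f_i},2w^{f_i})-1$ imposes exactly the relations $K_iK_i'=1$ which cut $\Ut(\mfr{g})$ out of $\tUt(\mfr{g})$.
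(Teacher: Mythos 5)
Your proposal follows essentially the same route as the paper's proof: verify the defining relations of $\tUt(\mfr{g})$ on the Chevalley generators of $(R,\otimes)$ (the $K$--$K$ and $K$--$E$, $K$--$F$ relations via the leading-term formula, the commutator $[E_i,F_j]$ via an explicit sheaf-level computation on the small cyclic quiver varieties), then combine Theorem \ref{thm:isom_coordinate_ring} with the triangular decomposition of Theorem \ref{thm:triangular_decomposition} to obtain bijectivity, and deduce (ii) from the centrality of $\can(v^{f_i}+v^{\Sigma f_i},2w^{f_i})$ and the identification with $K_iK_i'$. Two small imprecisions to fix when writing this up: the exponent $t^{\pm a_{ij}}$ in the $K$--$E$ relations does not come from the antisymmetric twist $\langle\ ,\ \rangle_a$ alone but from combining it with the shift $d((v^2,w^2),(v^1,w^1))-d((v^1,w^1),(v^2,w^2))$ of the leading term as in Proposition \ref{prop:EK}, and the absence of correction terms in the Cartan products is not a consequence of the strata being points but of the dominance combinatorics of Proposition \ref{prop:dominant_decomposition}.
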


Finally, we consider the dual canonical basis of $\Ut(\mfr{n}^+)$ with respect to Lusztig's bilinear
form $(\ ,\ )_L$, which is denoted by
$$B^*_L=\set{B^*_L(w)|w\in W^+}.$$

Define the rescaled dual
canonical basis to be $\tB^*_L=\set{\tB^*_L(w)|w\in W^+}$ such that
\begin{align}
   \tB^*_L(w)=t^{\Hf N(\Phi(w))-\deg\Phi(w)}B^*_L(w).
\end{align}

It is not obvious to see that on the Grothendieck ring $R^+$, our
twisted product $\otimes$ agrees with the non-commutative
multiplication $*$ defined in \cite{HernandezLeclerc11}, which we will
show in the last section. Once we see that they coincide on the
subalgebra $R^+$, \cite{HernandezLeclerc11}[Theorem 6.1] is translated
as the following.
\begin{Thm}[{\cite{HernandezLeclerc11}[Theorem 6.1]}]\label{thm:Lusztig_basis}
  The isomorphism $\kappa$ identifies $\set{\can(v, w),w\in W^S}$ with
  $\tB^*_L$ such that $\kappa \can(v,w)=\tB^*_L(w-C_qv)$.
\end{Thm}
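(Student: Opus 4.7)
The plan is to derive Theorem \ref{thm:Lusztig_basis} as a bookkeeping consequence of the Kashiwara-form version Theorem \ref{thm:isom_coordinate_ring}, by tracking two separate rescalings: one coming from the difference between $\kappa$ and $\tKappa$ on the positive subalgebra, and one coming from the comparison between Kashiwara's and Lusztig's bilinear forms on $\Ut(\mfr{n}^+)$.

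The first step is to invoke the product-comparison result promised for the last section of the paper: the twisted product $\otimes$, when restricted to $R^+$, coincides with the non-commutative product $*$ of \cite{HernandezLeclerc11}. Modulo this identification, Theorem \ref{thm:isom_coordinate_ring} directly supplies an algebra isomorphism $\tKappa:(R^+_{\Q(t^\Hf)},\otimes)\to \Ut(\mfr{n}^+)_{\Q(t^\Hf)}$ with $\tKappa \can(0,e_{\sigma S_i})=E_i$ and $\tKappa \can(v,w)=\tB^*_K(w-C_q v)$. Next I would compare $\kappa|_{R^+}$ with $\tKappa$: both are $W^+$-graded algebra isomorphisms onto $\Ut(\mfr{n}^+)_{\Q(t^\Hf)}$ and they differ on generators only by the rescaling $\kappa\can(0,e_{\sigma S_i})=\frac{1-t^2}{t}E_i$. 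Since the generators have weight-degree $1$ and both maps are multiplicative, this difference propagates to every weight $\tilde w\in W^+$ as the scalar $\left(\frac{1-t^2}{t}\right)^{\deg \Phi(\tilde w)}$, giving
\[
\kappa \can(v,w)=\left(\frac{1-t^2}{t}\right)^{\deg\Phi(w-C_q v)}\tB^*_K(w-C_q v).
\]

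The remaining step is to translate $\tB^*_K$ into $\tB^*_L$ by dualizing \eqref{eq:compare_bilinear_form}. Since $B^*_K$ and $B^*_L$ are both dual to the canonical basis, but with respect to $(\,,\,)_K$ and $(\,,\,)_L$ respectively, and since these forms differ by the factor $(1-t^2)^{\sum_i \beta_i}=(1-t^2)^{\deg\Phi(w)}$ on the weight-$w$ component, dualizing yields $B^*_L(w)=(1-t^2)^{\deg\Phi(w)}B^*_K(w)$. Inserting this into the definitions
\[
\tB^*_K(w)=t^{\Hf N(\Phi(w))}B^*_K(w),\qquad \tB^*_L(w)=t^{\Hf N(\Phi(w))-\deg\Phi(w)}B^*_L(w),
\]
produces the clean identity $\tB^*_L(w)=\left(\frac{1-t^2}{t}\right)^{\deg\Phi(w)}\tB^*_K(w)$, and plugging this back finishes the proof: $\kappa\can(v,w)=\tB^*_L(w-C_q v)$.

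The derivation itself is routine; the genuine mathematical content has been outsourced. The hard part is therefore not in the manipulations above but entirely in establishing the product comparison $\otimes=*$ on $R^+$, which is what the last section of the paper is devoted to. That identification is subtle because the two twisted products are cooked up from different deformation data (the anti-symmetrized Euler form on one side versus the convention of \cite{HernandezLeclerc11} on the other), and any potential obstruction to the current theorem is concentrated there rather than in the cleanup described above.
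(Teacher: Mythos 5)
Your proposal is correct and takes essentially the same route as the paper: the paper also deduces the theorem from Hernandez--Leclerc's Theorem 6.1 once the identification of $\otimes$ with their product $*$ on $R^+$ (Proposition \ref{prop:same_N}) is in place, which is exactly the input you flag as the genuine content. The only difference is that you spell out the rescaling bookkeeping between $\kappa$ and $\tKappa$ and between the Kashiwara and Lusztig forms, which the paper leaves implicit in the phrase ``is translated as the following'' and its one-line proof.
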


\subsection{Examples}
\label{sec:example}

\begin{Eg}[Type $\mfr{sl}_2$]\label{eg:SL_2}
  Assume that the quiver $Q$ consists of a single point. Then $h$ equals $2$. $\epsilon$
  is a $4$-th primitive root of unity. The vector space
  $\Rep^\epsilon(Q;v,w)$ for cyclic quiver varieties is given by Figure
  \ref{fig:A_1_rep_space}.

The Chevalley generators of the
  Grothendieck ring $(R,\otimes)$ are given by
  \begin{align*}
    \can(0,e_{\sigma S}),\ \can(e_S,e_{\sigma S+\sigma \Sigma S}),\
    \can(e_{\Sigma S},e_{\sigma S+\sigma \Sigma S}),\ \can(0, e_{\sigma\Sigma S}).
  \end{align*}
Theorem \ref{thm:isom} identifies $R_{\Q(t^\Hf)}$ with
$\tUt(\mfr{sl}_2)_{\Q(t^\Hf)}$ and the above generators with $E$, $K'$,
$K$, $F$ respectively.
\end{Eg}

\begin{figure}[htb!]
\centering
\beginpgfgraphicnamed{A_1_rep_space}
\begin{tikzpicture}[scale=0.5]
\node [color=blue] (w10) at (-12,-6) {$W(\sigma S)$};
\node [color=blue] (w12) at (-4,-6) {$W(\sigma\Sigma S)$};
\node [color=blue] (w14) at (4,-6) {$W(\sigma  S)$};

\node [] (v11) at (-8,-6) {$V(S)$};
\node [] (v13) at (0,-6) {$V(\Sigma S)$};

\node [color=blue] (deg0) at (-12,-4) {$\mathrm{height}=1$};
\node [color=blue] (deg1) at (-8,-4) {$\epsilon$};
\node [color=blue] (deg2) at (-4,-4) {$\epsilon^2$};
\node [color=blue] (deg3) at (0,-4) {$\epsilon^3$};
\node [color=blue] (deg4) at (4,-4) {$\epsilon^4=1$};

\draw[-triangle 60][color=red] (w14) edge  node[above] {$\alpha$} (v13);
\draw[-triangle 60][color=red] (w12) edge  node[above] {$\alpha$} (v11);

\draw[-triangle 60][color=blue] (v13) edge node[above] {$\beta$} (w12);
\draw[-triangle 60][color=blue] (v11) edge node[above] {$\beta$} (w10);

\end{tikzpicture}
\endpgfgraphicnamed
\caption{$\Rep^\epsilon(Q;v,w)$ for cyclic quiver varieties of type $\mfr{sl}_2$.}
\label{fig:A_1_rep_space}
\end{figure}

\begin{Eg}[Type $\mfr{sl}_3$]\label{eg:SL_3}
  Assume that the quiver $Q$ takes the form $(2\xra{h} 1)$. Then $h$ equals
  $3$. $\epsilon$ is a $6$-th root of unity. The vector space
  $\Rep^\epsilon(Q;v,w)$ for cyclic quiver varieties is given by Figure
  \ref{fig:A_2_rep_space}.

The Chevalley generators of the
  Grothendieck ring $(R,\otimes)$ are given by
  \begin{align*}
    \can(0,e_{\sigma S_i}),\ i=1,2\\
\can(e_{S_1}+e_{P_2},e_{\sigma S_1+\sigma \Sigma S_1}),\ 
\can(e_{S_2}+e_{\Sigma S_1},e_{\sigma S_2+\sigma \Sigma S_2}),\\
\can(e_{\Sigma S_1}+e_{\Sigma P_2},e_{\sigma S_1+\sigma \Sigma S_1}),\ 
\can(e_{\Sigma S_2}+e_{S_1},e_{\sigma S_2+\sigma \Sigma S_2}),\\
  \can(0, e_{\sigma\Sigma S_i}),\ i=1,2.
  \end{align*}
Theorem \ref{thm:isom} identifies $R_{\Q(t^\Hf)}$ with $\tUt(\mfr{sl}_3)_{\Q(t^
\Hf)}$ and the above generators with $E_i$, $K_1'$, $K_2'$, $K_1$,
$K_2$, $F_i$ respectively.
\end{Eg}

\begin{figure}[htb!]
 \centering
\beginpgfgraphicnamed{A_2_rep_space}
\begin{tikzpicture}[scale=0.55]

\node [color=blue] (w10) at (-12,-6) {$W(\sigma S_1)$};
\node [color=blue] (w21) at (-8,-2) {$W(\sigma P_2)$};
\node [color=blue] (w12) at (-4,-6) {$W(\sigma S_2)$};
\node [color=blue] (w23) at (0,-2) {$W(\sigma \Sigma S_1)$};
\node [color=blue] (w14) at (4,-6) {$W(\sigma \Sigma P_2)$};
\node [color=blue] (w25) at (8,-2) {$W(\sigma \Sigma S_2)$};

\node [] (v11) at (-8,-6) {$V(S_1)$};
\node [] (v22) at (-4,-2) {$V(P_2)$};
\node [] (v13) at (0,-6) {$V(S_2)$};
\node [] (v24) at (4,-2) {$V(\Sigma S_1)$};

\node [color=blue] (w16) at (12,-6) {$W(\sigma S_1)$};
\node [] (v20) at (-12,-2) {$V(\Sigma S_2)$};
\node [] (v15) at (8,-6) {$V(\Sigma P_2)$};
\node [] (v26) at (12,-2) {$V(\Sigma S_2)$};

\node [color=blue] (deg0) at (-12,0) {$\mathrm{height}=1$};
\node [color=blue] (deg1) at (-8,0) {$\epsilon$};
\node [color=blue] (deg2) at (-4,0) {$\epsilon^2$};
\node [color=blue] (deg3) at (0,0) {$\epsilon^3$};
\node [color=blue] (deg4) at (4,0) {$\epsilon^4$};
\node [color=blue] (deg5) at (8,0) {$\epsilon^5$} ;
\node [color=blue] (deg6) at (12,0) {$\epsilon^6=1$} ;

\draw[-triangle 60][color=red] (w16) edge  node[above] {$\alpha_1$} (v15);
\draw[-triangle 60][color=red] (w14) edge  node[above] {$\alpha_1$} (v13);
\draw[-triangle 60][color=red] (w12) edge  node[above] {$\alpha_1$} (v11);

\draw[-triangle 60][color=red] (w25) edge  node[above] {$\alpha_2$} (v24);
\draw[-triangle 60][color=red] (w23) edge  node[above] {$\alpha_2$} (v22);
\draw[-triangle 60][color=red] (w21) edge  node[above] {$\alpha_2$} (v20);

\draw[-triangle 60][color=blue] (v15) edge node[above] {$\beta_1$} (w14);
\draw[-triangle 60][color=blue] (v13) edge node[above] {$\beta_1$} (w12);
\draw[-triangle 60][color=blue] (v11) edge node[above] {$\beta_1$} (w10);

\draw[-triangle 60][color=blue] (v26) edge node[above] {$\beta_2$} (w25);
\draw[-triangle 60][color=blue] (v24) edge node[above] {$\beta_2$} (w23);
\draw[-triangle 60][color=blue] (v22) edge node[above] {$\beta_2$} (w21);

\draw[-triangle 60] (v26) edge node[above] {$h$} (v15);

\draw[-triangle 60] (v24) edge node[above] {$h$} (v13);

\draw[-triangle 60] (v22) edge node[above] {$h$} (v11);

\draw[-triangle 60] (v15) edge node[above] {$\overline{h}$}  (v24);
\draw[-triangle 60] (v13) edge node[above] {$\overline{h}$}  (v22);
\draw[-triangle 60] (v11) edge node[above] {$\overline{h}$}  (v20);
\end{tikzpicture}
\endpgfgraphicnamed
\caption{$\Rep^\epsilon(Q;v,w)$ for cyclic quiver varieties of type $\mfr{sl}_3$.}
\label{fig:A_2_rep_space}
\end{figure}
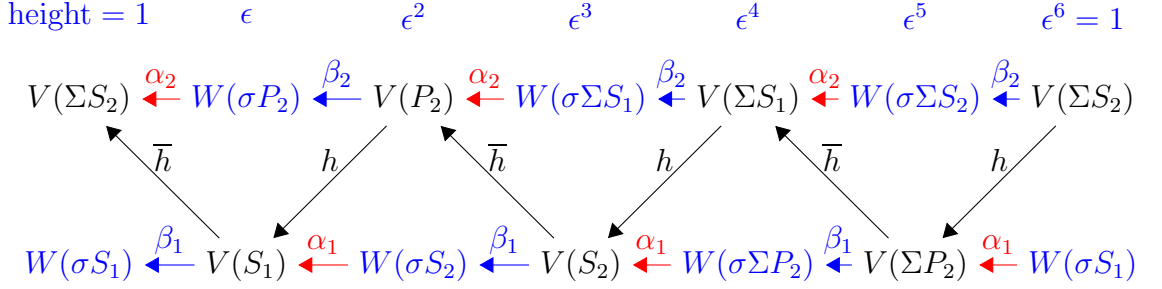

\begin{Eg}
\label{eg:compare_product}
We continue Example
\ref{eg:SL_3} and compare various twisted products.

Let us take Chevalley generators from the positive part and the negative part of the quantum group respectively. Our
twisted product $\otimes$ satisfies
\begin{align*}
  \can(0,e_{\sigma S_1})\otimes \can(0,e_{\sigma \Sigma S_2})=
  \can(0,e_{\sigma \Sigma S_2})\otimes \can(0,e_{\sigma S_1}).
\end{align*}
On the other hand, the twisted product in \cite[Theorem
7.3(R2)]{HernandezLeclerc11}\cite{Hernandez02}, if defined over the
pairs $(v,w)$, would demand the following relation
\begin{align*}
  \can(0,e_{\sigma S_1})\otimes \can(0,e_{\sigma \Sigma S_2})=
  t^{-(\alpha_1,\alpha_2)}\can(0,e_{\sigma \Sigma S_2})\otimes \can(0,e_{\sigma S_1}).
\end{align*}
Therefore, $\otimes$ is not the same as the product used in
\cite{HernandezLeclerc11}\cite{Hernandez02}.

$\otimes$ is not the twisted product in \cite{Nakajima04}
either. Recall that
our geometrical restriction functor is twisted by the Euler form in
\eqref{eq:Euler_twist}. But \cite{Nakajima04} twisted the geometrical
restriction functor by a different bilinear form $d_W$ for generic
$q$, which takes different values. For $q$ a root of unity, \cite{Nakajima04} (cf. also \cite[Theorem 3.5]{Hernandez04}) used the different twisted product $\tilde{\otimes}$ associated with the anti-symmetrized version of the bilinear form $d(\ ,\ )$, which gives us
\begin{align*}
d((0,e_{\sigma S_2}),(0,e_{\sigma S_1}))-d((0,e_{\sigma S_1}),(0,e_{\sigma S_2}))=0,\\
  \can(0,e_{\sigma S_1})\tilde{\otimes} \can(0,e_{\sigma S_2})=t^0\can(0,e_{\sigma  S_2}+e_{\sigma S_1})+\mathrm{other\ terms}.
\end{align*}
On the other hand, our product $\otimes$ would twist the above leading $t$-power by $t^{-\frac{1}{2}\langle S_1,S_2\rangle}$.

In fact, our twisted product $\otimes$ is defined for the pair $(v,w)$, where $v\in
\N^{\sigma \hat{I}}$, $w\in \N^{\hat{I}}$, while the the twisted products in \cite{Nakajima04}(for
generic $q$) and
\cite{HernandezLeclerc11}\cite{Hernandez02} are 
defined over the
dimension vectors $w$. In order to compare our $\otimes$ with the
latter two products, we have to reduce
the pair $(v,w)$ to the dimension $w-C_q v$. This would demand the Cartan elements $\can(e_{S_1}+e_{P_2},e_{\sigma
  S_1+\sigma \Sigma S_1})$, $\can(e_{S_2}+e_{\Sigma S_1},e_{\sigma
  S_2+\sigma \Sigma S_2})$, $
\can(e_{\Sigma S_1}+e_{\Sigma P_2},e_{\sigma S_1+\sigma \Sigma S_1})$, $ 
\can(e_{\Sigma S_2}+e_{S_1},e_{\sigma S_2+\sigma \Sigma S_2})$ to be
center with respect to $\otimes$, which is not true. The author
does not know any non-trivial twisted
product defined over cyclic quiver varieties such that these Cartan
elements become center elements. The incompatibility of
our product $\otimes$ and the twisted products in \cite{Nakajima04}(for
generic $q$) and
\cite{HernandezLeclerc11}\cite{Hernandez02} could be expected because
abelian categories of $2$-periodic complexes are not subcategory of derived categories. 

Nevertheless, the restriction of the twisted product $\otimes$ on $\can(0,e_{\sigma S_i})$, $\can(0,e_{\sigma S_j})$, $i,j=1,2$,
agrees with that of \cite[Theorem 7.3(R1)]{HernandezLeclerc11} for
the corresponding elements, \cf Section \ref{sec:comparison}.
\end{Eg}
\section{Proofs}

For simplicity, we shall often denote $\Hom_{\cD^b(Q)}(\ ,\ )$ by
$\Hom(\ ,\ )$.

\subsection{l-dominant pairs}

\begin{Lem}\label{lem:wrap}
  For any $x\in \Ind\Rep(Q)$, $y\in \sigma\hat{I}$, we have
  \begin{align*}
    \Hom_{\cD^b(Q)}(x,\tau M_y)=\Hom_{\cD^b(Q)}(x,M_{\tau y}).
  \end{align*}
\end{Lem}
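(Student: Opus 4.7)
The plan is a short case analysis. First note that $\pi(\tau M_y)=\tau\pi(M_y)=\tau y=\pi(M_{\tau y})$, so $\tau M_y$ and $M_{\tau y}$ lie in the same fiber of $\pi$. The group $\langle\tau,\Sigma\rangle$ acts transitively on $\Ind\cD^b(Q)$ (the AR-quiver is connected for Dynkin $Q$), and its action on $\sigma\hat{I}$ factors through the relations $\tau^h=1$ and $\Sigma^2=1$; combined with the Dynkin identity $\tau^h=\Sigma^{-2}$ on $\cD^b(Q)$, this exhibits each fiber of $\pi$ as a $\langle\Sigma^2\rangle$-orbit. Since $\Ind\cD^b(Q)=\bigsqcup_{n\in\Z}\Sigma^n\Ind\Rep(Q)$ for $Q$ Dynkin, each such orbit meets $\Ind\Rep(Q)\sqcup\Sigma\Ind\Rep(Q)$ in a unique element, which is precisely the value of the section $M_?$ at the image point.

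Next, write $M_y=\Sigma^a N$ with $a\in\{0,1\}$ and $N\in\Ind\Rep(Q)$, and proceed by cases. If $N$ is non-projective, then $\tau N$ is again an indecomposable object of $\Rep(Q)$ (the AR translation within the hereditary category), so $\tau M_y=\Sigma^a\tau N\in\Ind\Rep(Q)\sqcup\Sigma\Ind\Rep(Q)$. If $a=1$ and $N=P_i$ is projective, then using $\tau=\Sigma^{-1}\nu$ and $\nu P_i=I_i$ one obtains $\tau M_y=\Sigma\tau P_i=\Sigma\cdot\Sigma^{-1}I_i=I_i\in\Ind\Rep(Q)$. In both of these situations uniqueness of the section representative inside its fiber forces $\tau M_y=M_{\tau y}$, so the desired equality of Hom spaces is trivial.

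The only genuinely nontrivial case is $M_y=P_i$ (so $a=0$, $N=P_i$). Here $\tau M_y=\Sigma^{-1}I_i$, whose $\Sigma$-degree $-1$ places it outside $\Ind\Rep(Q)\sqcup\Sigma\Ind\Rep(Q)$; within the same $\langle\Sigma^2\rangle$-orbit, the unique element landing in this region is $\Sigma I_i$, so $M_{\tau y}=\Sigma I_i$. For any $x\in\Ind\Rep(Q)$ we then compute
\begin{align*}
\Hom_{\cD^b(Q)}(x,\Sigma^{-1}I_i)&=\Ext^{-1}_{\C Q}(x,I_i)=0,\\
\Hom_{\cD^b(Q)}(x,\Sigma I_i)&=\Ext^{1}_{\C Q}(x,I_i)=0,
\end{align*}
the second equality because $I_i$ is injective in the hereditary category $\Rep(Q)$. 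Both Hom spaces vanish, so the identity holds in this case as well.

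The main obstacle is this last case: one must pin down $M_{\tau y}=\Sigma I_i$ (which relies on $\Sigma I_i$ and $\Sigma^{-1}I_i$ lying in the same $\langle\Sigma^2\rangle$-orbit, hence in the same fiber of $\pi$), and then observe that the apparent discrepancy $\tau M_y\neq M_{\tau y}$ is invisible from the viewpoint of any indecomposable module $x\in\Ind\Rep(Q)$, since one Hom vanishes by negative cohomological degree and the other by injectivity of $I_i$. All remaining cases reduce to the tautological identity $\tau M_y=M_{\tau y}$ once the section is correctly understood.
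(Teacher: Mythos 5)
Your proposal is correct and takes essentially the same route as the paper: split on whether $M_y$ is a projective module, observe that otherwise $\tau M_y$ stays in $\Ind\Rep(Q)\sqcup\Sigma(\Ind\Rep(Q))$ and hence equals $M_{\tau y}$, and in the projective case note that both Hom spaces vanish (one for living in negative degree, the other because $M_{\tau y}=\Sigma I_i$ with $I_i$ injective). One small caveat: your parenthetical claim that $\langle\tau,\Sigma\rangle$ acts transitively on $\Ind\cD^b(Q)$ is false in general (already in type $\mathbb{A}_4$ the $\tau$-rows only pair off under $\Sigma$), but this is harmless since what your argument actually uses --- that each fiber of $\pi$ is a $\langle\Sigma^2\rangle$-orbit meeting $\Ind\Rep(Q)\sqcup\Sigma(\Ind\Rep(Q))$ exactly once --- follows correctly from $\pi$ commuting with $\tau$, every indecomposable being of the form $\tau^k P_i$, and the Dynkin identity $\tau^h=\Sigma^{-2}$.
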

\begin{proof}
Notice that $\tau^h=1$ and $\sigma\hat{I}$ is
identified with $\Ind \Rep(Q)\sqcup \Sigma\Ind \Rep(Q)$. The statement obviously hold if $M_y$ is not a projective $\C
  Q$-module. On the other hand, assume $M_y$ is a projective $\C
  Q$-module. Then $\Sigma^{-1}M_{\tau y}$ is an injective $\C Q$-module and both hand sides vanish.
\end{proof}

\begin{Lem}
 For any $i\in I$, $w^{f_i}-C_q v^{f_i}$ vanishes.
\end{Lem}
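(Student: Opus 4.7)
The identity $w^{f_i} - C_q v^{f_i} = 0$ is a coordinate-wise equality in $\Z^{\hat I}$ which I will verify at each $(k,a) \in \hat I$. Unpacking the definitions, the claim at this coordinate reads
$$
\dim \Hom(S_i, M_{\sigma(k,a)}) + \dim \Hom(S_i, M_{\sigma^{-1}(k,a)}) + \sum_{j \sim k,\, j \neq k} a_{kj}\, \dim \Hom(S_i, M_{(j,a)}) = \delta_{(k,a),\sigma S_i} + \delta_{(k,a),\sigma \Sigma S_i},
$$
and all four neighbor vertices lie in $\sigma \hat I$, so each term equals $v^{f_i}$ evaluated via the section $M_?$.

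My plan is to recognize the left-hand side as an Auslander--Reiten mesh relation in $\cD^b(Q)$ applied to $\dim \Hom(S_i, -)$. Set $y := \sigma^{-1}(k,a) \in \sigma \hat I$ and consider the AR triangle
$$
\tau M_y \longrightarrow E_y \longrightarrow M_y \longrightarrow \Sigma \tau M_y
$$
in $\cD^b(Q)$. Since the AR quiver of $\cD^b(Q)$ has shape $\Z Q$ and the covering $\pi\colon \Ind \cD^b(Q) \to I \times \langle \epsilon \rangle$ is $\tau$-equivariant, the indecomposable $\tau M_y$ has $\pi$-image $\sigma(k,a)$, and the middle term decomposes as $E_y = \bigoplus_{j \sim k,\, j \neq k} N_j$ with $\pi(N_j) = (j,a)$ and each summand appearing once (as $|a_{kj}|=1$ in types $\mathbb{ADE}$). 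I will then apply $\Hom_{\cD^b(Q)}(S_i, -)$ to the triangle and use Lemma~\ref{lem:wrap} to identify $\dim \Hom(S_i, \tau M_y) = \dim \Hom(S_i, M_{\tau y}) = v^{f_i}(\sigma(k,a))$. The analogous identification for the middle summands --- namely $\dim \Hom(S_i, N_j) = v^{f_i}(j,a)$ --- will follow from the fact that $N_j$ and $M_{(j,a)}$ lie in the same $\tau^h = \Sigma^{-2}$ orbit, combined with the hereditary vanishing $\Ext^n(S_i, -) = 0$ for $n \notin \{0,1\}$, which forces both Hom-dimensions to agree (either because the two representatives coincide, or because both vanish by a degree argument).

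Taking the alternating sum of dimensions in the resulting long exact sequence then equates the left-hand side of the target identity with the rank of the connecting morphism $\partial\colon \Hom(S_i, M_y) \to \Hom(S_i, \Sigma \tau M_y)$. A case-by-case argument will show that $\dim\, \mathrm{im}(\partial)$ equals $1$ exactly when $(k,a) \in \{\sigma S_i, \sigma \Sigma S_i\}$: for $(k,a) = \sigma S_i$ the simple $S_i$ embeds into the socle of $M_y \in \Ind \Rep(Q)$ and witnesses a non-trivial connecting contribution, and for $(k,a) = \sigma \Sigma S_i$ the dual statement inside $\Sigma \Ind \Rep(Q)$ provides the other unit; in all other cases $\partial = 0$. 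The main technical obstacle will be carrying out the middle-term identification uniformly: because the section $M_?$ is only $\tau$-equivariant (not $\Sigma$-equivariant), at ``wrap-around'' positions the AR middle term $N_j$ may lie in a different $\Sigma$-shift of $\Ind \Rep(Q)$ than the section representative $M_{(j,a)}$, and reconciling the two Hom-dimensions requires the hereditary vanishing argument above together with a careful use of Serre duality. The boundary case analysis will also need uniform treatment.
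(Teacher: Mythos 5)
Your strategy is essentially the paper's: apply $\Hom(S_i,-)$ to the Auslander--Reiten triangle $\tau M_y\ra E_y\ra M_y\ra \Sigma\tau M_y$ ending at $M_y$, $y=\sigma^{-1}(k,a)$, read the left-hand side as the mesh relation, and use Lemma~\ref{lem:wrap} at the $\tau$-end. But your bookkeeping of the boundary contributions has a genuine gap. From the long exact sequence, the alternating sum $\dim\Hom(S_i,\tau M_y)-\dim\Hom(S_i,E_y)+\dim\Hom(S_i,M_y)$ equals $\dim\Im\omega^1+\dim\Im\partial$, where $\omega^1\colon \Hom(S_i,\Sigma^{-1}M_y)\ra\Hom(S_i,\tau M_y)$ is the \emph{other} connecting map; it is not the rank of $\partial$ alone. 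When $M_y\in\Sigma\Ind\Rep(Q)$ the space $\Hom(S_i,\Sigma^{-1}M_y)$ need not vanish, and this is exactly where the second unit comes from: at $(k,a)=\sigma\Sigma S_i$ one has $M_y=\Sigma S_i$, so $\Hom(S_i,\Sigma S_i)=\Ext^1(S_i,S_i)=0$ and your $\partial$ is zero there; the contribution $1$ is produced by the surjectivity of $\omega^1\colon\Hom(S_i,S_i)\ra\Hom(S_i,\Sigma\tau S_i)$. Hence the claim ``$\dim\Im\partial=1$ at $\sigma\Sigma S_i$'' fails as stated. Moreover, the assertion ``$\partial=0$ in all other cases'' is the crux and requires the defining almost-split property (every non-retraction to $M_y$ factors through $E_y$, and dually at the left end), which you never invoke; the ``socle embedding'' heuristic is not the mechanism and would mislead, e.g.\ $S_1$ embeds in the socle of $P_2$ in type $A_2$, yet the connecting map must vanish at the coordinate $\sigma P_2$. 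The paper handles this by splitting into $M_y\in\Ind\Rep(Q)$ and $M_y\in\Sigma\Ind\Rep(Q)$, in each case one end term vanishing for degree reasons, and killing the surviving boundary map by the universal property except for $M_y=S_i$, respectively $\Sigma S_i$.

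There is a second, smaller gap in your middle-term identification. The only problematic case is $M_y=P_k$ projective, where $E_y$ contains summands $\Sigma^{-1}J$ with $J$ a summand of $I_k/\mathrm{soc}\,I_k$, while the section representative at that vertex is $\Sigma J$. You then need $\dim\Hom(S_i,\Sigma J)=\dim\Ext^1(S_i,J)=0$, and this is not a degree vanishing nor a consequence of $\Ext^{n}=0$ for $n\notin\{0,1\}$; it holds because over a hereditary algebra every quotient of an injective module is injective, so $J$ is injective. With that fact, and with the corrected boundary analysis above (both connecting maps, controlled by the almost-split property), your argument closes and coincides with the paper's proof.
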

\begin{proof}
For any $x\in\sigma\hat{I}$, we have an almost split triangle in $\cD^b(Q)$
\begin{align*}
  \tau M_x\ra E\ra M_x\ra \Sigma \tau M_x.
\end{align*}

For simplicity, we denote $\Hom_{\cD^b(Q)}(\ ,\ )$ by $\Hom(\ ,\ )$.
Applying the functor $\Hom(S_i,\ )$ to this triangle, we get a long
exact sequence
\begin{align*}
  \Hom(S_i,\Sigma^{-1}M_x)&\xra{\omega^1}\Hom(S_i,\tau M_x)\\
&\ra\Hom(S_i, E)\ra
  \Hom(S_i,  M_x)\xra{\omega^2}\Hom(S_i,\Sigma\tau M_x).
\end{align*}

By Lemma \ref{lem:wrap} and \eqref{eq:define_vf}, the
coordinate of $C_q v^{f_i}$ at the vertex $\sigma x$ is $$(C_qv^{f_i})_{\sigma x}=  \dim\Hom(S_i,\tau
M_x)-\dim\Hom(S_i,E)+\dim\Hom(S_i,M).$$

(i) Assume $x\in\Ind \Rep(Q)$. Then $\Hom(S_i,\Sigma^{-1}M_x)$ vanishes.

If $x\neq S_i$, we get $\omega^2=0$ by the universal property of
Auslander-Reiten triangles and consequently
\begin{align*}
(C_qv^{f_i})_{\sigma x}= \dim\Hom(S_i,\tau M_x)-\dim\Hom(S_i,E)+\dim\Hom(S_i,M)=0.
\end{align*}

For $x=S_i$, we get $\Ker w^2=0$ and 
\begin{align*}
  (C_qv^{f_i})_{\sigma S_i}=\dim\Hom(S_i,S_i)=1.
\end{align*}

(ii) Assume $x\in \Sigma\Ind\Rep(Q)$. Then $\Hom(S_i, \Sigma\tau
M_x)$ vanishes. 

If
$x\neq \Sigma S_i$, we get $\omega^1=0$ by the universal propery of
Auslander-Reiten triangles and consequently
\begin{align*}
(C_qv^{f_i})_{\sigma x}= \dim\Hom(S_i,\tau M_x)-\dim\Hom(S_i,E)+\dim\Hom(S_i,M)=0.
\end{align*}

For $x=\Sigma S_i$, we get $\Cok w^1=0$ and
consequently 
\begin{align*}
  (C_qv^{f_i})_{\sigma \Sigma S_i}=\dim\Hom(S_i,\tau\Sigma S_i)=1.
\end{align*}
\end{proof}

\subsection{Proof of the one-half quantum group}

We prove Proposition \ref{prop:local_change} in this subsection, which
tells us that the study of cyclic quiver varieties $\cycAffQuot(w)$,
$w\in W^+$ can be reduced to the study of the graded quiver varieties
$\grAffQuot(w)$ for a generic choice of $q$. More precisely, we show
that these cyclic quiver varieties are free of ``wrapping paths'', \cf Example \ref{eg:wrapping_path_free}.

Proposition \ref{prop:local_change} allows us to
translate the results obtained in \cite{HernandezLeclerc11}
\cite{LP_rep_alg} for the latter varieties into Proposition
\ref{prop:generator} Theorem \ref{thm:isom_coordinate_ring}. For
completeness, we give a sketch of the proofs.

\begin{Prop}\label{prop:local_change}
  For any $w\in W^+$, the cyclic quiver variety $\cycAffQuot(w)$ is
  isomorphic to the graded quiver variety $\grAffQuot(w)$.
\end{Prop}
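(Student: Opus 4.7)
The plan is to construct an explicit isomorphism between $\cycAffQuot(w)$ and $\grAffQuot(w)$ stratum-by-stratum, using the section $M\colon \sigma\hat I \to \Ind\cD^b(Q)$ to transport representation data from the cyclic side to the graded side. The guiding observation is that when $w\in W^+$, all of the relevant representation data is localized in a ``fundamental domain'' of the covering $\pi\colon \Ind\cD^b(Q)\to\sigma\hat I$, so the cyclic identification $\tau^h=1$ never plays a role. Concretely, I would first pin down, for each $l$-dominant pair $(v,w)$ appearing in the stratification $\cycAffQuot(w)=\sqcup_v\cycRegStratum(v,w)$ with $\cycRegStratum(v,w)\neq\emptyset$, a canonical ``lift'' $(\hat v,w)$ on the graded side: since $\tilde w := w-C_q v$ lies in $\N^{\sigma\hat I}$ and can be pulled back to an element of $W^+$ via $M$, Theorem \ref{thm:abelian_representative} yields a unique $l$-dominant pair in $V^+\times W^S$, and tracking the combinatorics produces $\hat v$ with the same numerical dimensions as $v$ but supported in the image of $M$. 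This sets up a bijection between the cyclic and graded stratifications.

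Next I would upgrade this bijection of strata to an isomorphism of affine varieties. Via the section $M$, each graded piece $V(x)$ (resp.\ $W(x)$) for $x\in\sigma\hat I$ on the cyclic side is identified with $V(M_x)$ (resp.\ $W(M_x)$) on the generic-$q$ side. For $w\in W^+$ and $\hat v$ supported in the fundamental domain $M(\sigma\hat I)\subset \Ind\cD^b(Q)$, each arrow in the cyclic $\hat I$ between supported vertices lifts uniquely to an arrow in the generic $\hat I$. This gives a canonical identification $\Rep^\epsilon(Q;v,w)\cong \Rep^q(Q;\hat v,w)$ of the ambient $GL_v$-representations that intertwines both moment maps and the $GL_v$-action; descending to affine GIT quotients, $\cycAffQuot(v,w)\cong \grAffQuot(\hat v,w)$, and taking the union over $v$ gives the desired isomorphism $\cycAffQuot(w)\cong \grAffQuot(w)$.

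The main obstacle is the ``no wrapping'' step: one must verify that for $w\in W^+$ and any $(v,w)$ in the cyclic stratification with nonempty regular stratum, the lifted support does lie inside $M(\sigma\hat I)$, so that $\pi$ is injective on it. Geometrically this rules out representations in $\Rep^\epsilon(Q;v,w)$ that genuinely exploit the cyclic relation $\tau^h=1$; such ``wrapping paths'' (illustrated in Example \ref{eg:wrapping_path_free}) have no counterpart on the generic-$q$ side, and the claim is that they cannot contribute whenever $w\in W^+$. The argument will rely on $\epsilon$ being a primitive $2h$-th root of unity together with the geometry of the Auslander--Reiten quiver of $\Rep(Q)$, whose image under $M$ occupies a region of height-diameter strictly less than $2h$ inside the AR quiver of $\cD^b(Q)$.
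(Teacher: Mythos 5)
Your proposal defers exactly the point that carries all the weight. The ``no wrapping'' step is not a technical verification to be supplied later: it \emph{is} the proposition, and you only assert it (``the claim is that they cannot contribute whenever $w\in W^+$''), gesturing at the primitivity of $\epsilon$ and the shape of the AR quiver. Moreover, the step you would need is misstated: since $v$ is lifted through the section $M_?$, its support automatically sits in the fundamental domain, so injectivity of $\pi$ on the support is not the issue. What has to be killed are \emph{compositions of maps} (the $B,\alpha,\beta$ subject to $\mu=0$) that wrap around the cyclic grading, i.e.\ one must show that every such composition between vertices $\sigma x$, $\sigma y$ with $x,y\in\Ind\Rep(Q)$ which leaves the module region factors through $\sigma\Sigma\Ind\Rep(Q)$ and hence vanishes. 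The paper proves precisely this, by passing to the singular Nakajima categories $\cS$, $\cS^\epsilon$ (so that $\grAffQuot(w)$ and $\cycAffQuot(w)$ become module varieties over them), introducing the auxiliary subcategories $\cX$, $\cX^\epsilon$ on the objects $\{I_i\}\sqcup\Sigma\Ind\Rep(Q)\sqcup\sigma\Sigma\Ind\Rep(Q)$, and using that in $(\cD^b(Q))\op$ all morphisms $\Sigma x'\to y'$ between injectives vanish; comparing mesh relations then gives $\cS\simeq\cS^\epsilon$ and hence the isomorphism of varieties globally. None of this, or a substitute for it, appears in your plan.

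There is also a structural problem with the stratum-by-stratum strategy. Your proposed identification $\Rep^\epsilon(Q;v,w)\cong\Rep^q(Q;\hat v,w)$ of ambient $GL_v$-spaces is false in general: when the support of $v$ fills the cyclic set $\sigma\hat I$, there are arrows crossing the ``seam'' of the fundamental domain with no counterpart on the generic-$q$ side, so intertwining the moment maps already presupposes that these wrapping components are irrelevant --- which is the statement to be proved, making the argument circular. In addition, Theorem \ref{thm:abelian_representative} is a statement for generic $q$; it does not tell you which $l$-dominant pairs have nonempty regular strata on the cyclic side (that is part of what Proposition \ref{prop:local_change} is later used to control, e.g.\ in Proposition \ref{prop:dominant_combinatorics}), and even a perfect bijection of strata together with isomorphisms stratum by stratum would not by itself assemble into an isomorphism of the affine varieties $\cycAffQuot(w)\cong\grAffQuot(w)$ without a global morphism; the paper gets the global map for free from the natural embedding $\grAffQuot(w)\hookrightarrow\cycAffQuot(w)$ and the equivalence of the two singular categories.
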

 have studied the
graded quiver varieties $\grAffQuot(w)$ for $w\in w^+$. By Proposition
\ref{prop:local_change}, their results can be used for the the cyclic quiver variety
$\cycAffQuot(w)$, $w\in W^+$.

We give an example to show how a ``wrapping path'' vanishes.
\begin{Eg}\label{eg:wrapping_path_free}
Let us look at Figure \ref{fig:A_2_rep_space}. For any given $w\in
W^+$, take any composition of irreducible morphisms which only passes
through the vertices $x\in\sigma \hat{I}$ or $\sigma S_i$, $i\in I$,
with the ending points of the type $\sigma S_i$.

For
example, we can take a composition $p$, such that the sequence of the vertices it passes through is $(\sigma
S_2, S_1, \sigma S_1, \Sigma P_2, \Sigma S_1, S_2, \sigma S_2)$. Then
$p$ horizontally wraps the figure, in the sense that the heights of
these vertices occupy the whole cyclic group $\langle \epsilon \rangle$. 

Take the factor $p'$ of $p$ corresponding to the subsequence
$(\Sigma P_2, \Sigma S_1, S_2)$. Because of the relations $\mu=0$ and
that $w$ is concentrated on $\sigma S_i$, $i\in I$, $p'$
corresponds to a morphism from $\Sigma P_2$ to $S_2$ in
$(\cD^b(Q))\op$. But such a morphism must vanish. Therefore, $p'$ and
$p$ vanish.
\end{Eg}

\begin{proof}[Proof of Proposition \ref{prop:local_change}]
We shall use the notions of Nakajima categories in the sense of
\cite{KellerScherotzke2013}. Let $\cR$ the mesh category associated with
a generic $q\in \C^*$ and $\cR^\epsilon$ the mesh category
associated with $\epsilon$. Let $\cS$ denote the singular Nakajima category which is generated by
the objects $\sigma x$, $x\in \Ind\Rep(Q)$, in the mesh category
$\cR$. Similarly, define the singular Nakajima category $\cS^\epsilon$
as a subcategory of $\cR^\epsilon$.

By our convention, an $\cS$-module is a functor from $\cS$ to the
category of complex vector spaces. Then the variety of $w$-dimensional
$\cS$-module, denoted by $\Rep(\cS,w)$, is isomorphic $\grAffQuot(w)$, \cf \cite{KellerScherotzke2013} \cite{LP_rep_alg}, and $\cycAffQuot(w)$
is a closed subvariety in $\Rep(\cS^\epsilon,w)$. Notice that, we can naturally embed $\grAffQuot(w)$ into $\cycAffQuot(w)$. Therefore, to verify the proposition, it suffices to show that $\cS$
and $\cS^\epsilon$ are equivalent. Its proof consists of the following
two steps.


(i) For any two modules $ x, y\in \Ind\Rep(Q)$,
let $p$ be any composition of irreducible morphisms in $\cR^\epsilon$, such that $p$ starts
from $\sigma x$, ends at $\sigma y$, and does not pass any object
$\sigma z\in \cR^\epsilon$ with $z\in \Sigma \Ind\Rep(Q)$ in its
definition. Let $(\ui,\ua)=((i_0,a_0),\ldots,(i_r,a_r))$ denote the
sequence of the objects that $p$ passes through, where
$(i_0,a_0)=\sigma x$, $(i_r,a_r)=\sigma y$, $r\in \N$. Notice that our
convention of $\cR^\epsilon$ implies $a_{t+1}=a_t*\epsilon^{-1}$, $\forall 0\leq t\leq r-1$. By abuse
of notation, let $\ua$ also denote the set $\{a_t|0\leq t\leq r\}$.


Assume the sequence $(\ui,\ua)$ contains some object outside $\Ind\Rep(Q)$. We want to
show that the morphism $p$ factors through some object in $\sigma\Sigma\Ind\Rep(Q)$.

First notice that the sequence $(\ui,\ua)$ must
contain a consecutive subsequence $(\ui',\ua')$ from
$\Sigma x'$ to $y'$, where $x',y'$ are some indecomposable injective
$\Rep(Q)$. We can require $(\ui',\ua')$ to be small in the sense
that $\ua'\neq \langle \epsilon \rangle$. The factor of $p$ associated
with the small subsequence $(\ui',\ua')$ is denoted by $p'$.

Define the subcategory $\cX^\epsilon$ of $\cR^\epsilon$ such that its
set of objects is $\{I_i,i\in I \}\sqcup\Sigma\Ind \Rep(Q)\sqcup
\sigma\Sigma\Ind \Rep(Q)$ and its morphisms are generated by the irreducible
morphism among these objects in the mesh category
$\cR^\epsilon$. Define the subcategory $\cX$ of $\cR$ similarly. By
comparing the mesh relations we see the two subcategories are
equivalent. Associate to these categories their quotients
$\underline{\cX}$ and $\underline{\cX^\epsilon}$ by sending all the
morphisms factoring through $\sigma\Sigma\Ind \Rep(Q)$ to $0$. Then
the quotient categories are still equivalent.

Notice that $\underline{\cX}$ is equivalent to a subcategory of $(\cD^b(Q))\op$. Therefore, all
morphisms in $\underline{\cX}$ from $\Sigma x'$ to $y'$, $x',y'\in \{I_i,i\in I \}$
vanish. Because the subsequence $(\ui',\ua')$ is small, the morphism
$p'$ is well defined on $\cX^\epsilon$. It follows that $p'=0$ in
$\underline{\cX^\epsilon}$. Therefore, in the category $\cR^\epsilon$, $p'$ and $p$ factors through the objects
of $\sigma\Sigma\Ind\Rep(Q)$.

(ii) By (i), we deduce that the singular category $\cS^\epsilon$ is the subcategory
of $\cR^\epsilon$ whose set of objects is $\sigma \hat{I}$ and whose
morphisms are linear combinations of compositions of the irreducible maps among the elements
in $\sigma
\hat{I}\cup \ind \C Q$. By comparing the mesh relations, we see
$\cS^\epsilon$ is equivalent to $\cS$.
\end{proof}


\begin{Prop}[\cite{HernandezLeclerc11} \cite{LP_rep_alg}]\label{prop:w_gr_elements}
For any $w\in W^S$, $\grAffQuot(w)$ is isomorphic to $\Rep(Q,\sum_i
w_{\sigma S_i} e_i)$. Moreover, the nonempty regular strata are in
bijection with the orbits of  $\Rep(Q,\sum_i
w_{\sigma S_i} e_i)$, and consequently, in bijection with the dual
canonical basis
elements of $U_t(\mfr{n}^+)$ with the homogeneous degree $\sum_i
w_{\sigma S_i}\alpha_i$. 
\end{Prop}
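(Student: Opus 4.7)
The plan is to reduce the claim to Lusztig's original geometric construction of the canonical basis via the representation variety $\Rep(Q,d)$ of the Dynkin quiver $Q$. The key input is the identification of $\grAffQuot(w)$, for $w$ supported on the simple objects $\{\sigma S_i\}_{i\in I}$, with a standard quiver representation variety.

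First, I would analyze the singular Nakajima category $\cS$ restricted to the supports that matter. By the proof of Proposition \ref{prop:local_change} (for the generic-$q$ side), an $\cS$-module $M$ of dimension vector $w \in W^S$ has $M_{\sigma S_i}$ possibly nonzero and $M_{\sigma x}=0$ for all other $x\in\sigma\hat{I}$. Following the conventions of $\cR$, each irreducible morphism in $\cS$ that contributes nontrivially to such a module must begin and end at some $\sigma S_i$ and $\sigma S_j$, and must factor through a composition of irreducible arrows among the $\ind \C Q$ vertices in $\cR$. By tracing these compositions against the mesh relations, one sees that the surviving structure maps are precisely parametrized by the arrows of $Q$ and satisfy no further relations, so the $\cS$-modules of dimension $w$ are exactly the $\C Q$-modules of dimension vector $\sum_i w_{\sigma S_i} e_i$. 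This yields the isomorphism $\grAffQuot(w)\cong \Rep(Q,\sum_i w_{\sigma S_i}e_i)$.

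Next, I would match the stratifications. Under the identification just obtained, the morphism $\pi$ from the smooth GIT quotient to the affine quotient parametrizes representations up to isomorphism, so the image of $\pi$ decomposes according to $GL_d$-orbits on $\Rep(Q,d)$ with $d=\sum_i w_{\sigma S_i} e_i$. Because $Q$ is Dynkin, Gabriel's theorem ensures that the orbits are finite in number and in bijection with dimension-vector decompositions into positive roots. Combining with Theorem \ref{thm:abelian_representative}, each nonempty $GL_d$-orbit corresponds to a unique $l$-dominant pair $(v,w)$ whose regular stratum $\grRegStratum(v,w)$ is nonempty.

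Finally, to pass to the dual canonical basis, I would invoke Lusztig's geometric construction: the dual canonical basis elements of $\Ut(\mfr{n}^+)$ of homogeneous degree $\sum_i w_{\sigma S_i}\alpha_i$ are indexed by the intersection cohomology complexes of $GL_d$-orbit closures in $\Rep(Q,d)$, hence by the orbits themselves, hence by the $l$-dominant pairs $(v,w)$ with nonempty regular strata. This chain of bijections gives the last assertion. The main obstacle is the first step: one must verify that the identification of $\cS$-modules with $\C Q$-modules is not just a bijection on closed points but an isomorphism of varieties intertwining the stratifications and the perverse sheaves $\cL(v,w)$ with Lusztig's IC sheaves, which ultimately rests on comparing the mesh relations of $\cR$ with the defining relations of the path algebra.
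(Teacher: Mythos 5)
The paper does not actually prove this proposition: it is quoted from \cite{HernandezLeclerc11} (Section 9) and \cite{LP_rep_alg}, and your outline follows exactly the strategy of those references, so your route is the intended one. However, the two decisive steps are asserted rather than established. First, the claim that, after discarding morphisms factoring through the other frozen vertices, the surviving maps between the objects $\sigma S_i$ of the singular Nakajima category $\cS$ are ``precisely parametrized by the arrows of $Q$ and satisfy no further relations'' \emph{is} the content of the isomorphism $\grAffQuot(w)\cong\Rep(Q,\sum_i w_{\sigma S_i}e_i)$; to prove it one must actually compute $\Hom_{\cS}(\sigma S_i,\sigma S_j)$ modulo the ideal of morphisms factoring through frozen objects, and in particular explain why for each edge of $Q$ only one direction of maps survives (forced by the $\braket{q}$-grading, i.e.\ the height bookkeeping of the kind carried out in the proof of Proposition \ref{prop:local_change}) --- otherwise one would land in representations of a doubled quiver with preprojective-type relations rather than in $\Rep(Q,d)$. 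You flag the scheme-theoretic refinement yourself, but the direction issue and the actual computation of these Hom spaces are where the real work of \cite{HernandezLeclerc11}\cite{LP_rep_alg} lies.

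Second, your justification of the stratum--orbit bijection is not valid as stated: the affine quotient parametrizes closed orbits in $\mu^{-1}(0)$ (semisimple objects), and the properness of $\pi$ does not by itself show that each nonempty regular stratum $\grRegStratum(v,w)$ is a single $GL_d$-orbit of $\Rep(Q,d)$. What is needed is the module-theoretic description of the regular strata (a point of $\grRegStratum(v,w)$ determines, and is determined by, the isomorphism class of the corresponding module), which in \cite{LP_rep_alg} is exactly the refinement underlying Theorem \ref{thm:abelian_representative}; invoking that theorem for the count of $l$-dominant pairs is fine, but you still need that the stratification induced on $\Rep(Q,d)$ from the quiver-variety side coincides with the orbit stratification. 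The final step --- orbits of $\Rep(Q,d)$ versus dual canonical basis elements of degree $\sum_i w_{\sigma S_i}\alpha_i$ via Lusztig's construction \cite{Lusztig91} --- is correct as you state it.
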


\begin{proof}[{Proof of Proposition \ref{prop:generator} Theorem \ref{thm:isom_coordinate_ring}}]
(i) We first prove Proposition \ref{prop:generator}(1) and Theorem
\ref{thm:isom_coordinate_ring}(1). 

$R^+$ is generated by
$\can(0,e_{\sigma S_i})$, $i\in I$. We will show that these generators
satisfies the quantum Serre relations in Proposition
\ref{prop:Serre_relation}. Then the identification between the
Chevalley generators induces a surjective map from
$\Ut(\mfr{n}^+)_{\Q(t^\Hf)}$ to $R^+_{\Q(t^\Hf)}$. It remains to check
that the two $W^S$-graded algebra have the same graded dimension, which follows
from Propositions \ref{prop:local_change} \ref{prop:w_gr_elements}.

(ii) The proof of Proposition \ref{prop:generator}(2) is the same as
in (i).

(iii) The claim of Theorem
\ref{thm:isom_coordinate_ring}(2) is a consequence of Theorem
\ref{thm:isom_coordinate_ring}(1). More details can be found in
the proof of \cite[Theorem
6.1(2)]{HernandezLeclerc11}).
\end{proof}
\begin{Rem}
One can also obtain Theorem \ref{thm:isom_coordinate_ring} by
identifying $\otimes$ on the subalgebra $R^+$
with the multiplication of the deformed Grothendieck ring in \cite{Hernandez02} \cite{HernandezLeclerc11} and applying the
result of \cite{HernandezLeclerc11}. The identification will be
discussed in the last section.
\end{Rem}





\subsection{Proof of the triangular decomposition}

Denote the extension $(\ )\otimes_\N \Z$ by $(\ )_\Z$.

For any $i,j\in I$, we have
\begin{align}\label{eq:injective_coeff}
  \begin{split}
(v^{f_i})_{I_j}=(v^{\Sigma f_i})_{\Sigma I_j}=\delta_{ij},\\
(v^{f_i})_{\Sigma I_j}=(v^{\Sigma f_i})_{I_j}=0.
\end{split}
\end{align}
The following lemma follows as a consequence.
\begin{Lem}\label{lem:decomposition_V}
$\N^{\sigma\hat{I}}$ is a subset of $V_\Z^+\oplus V^0\oplus V_\Z^-$.
\end{Lem}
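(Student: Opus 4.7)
The plan is to partition the set $\sigma\hat{I}$, viewed through the section $M_?$ as a subset of $\Ind\Rep(Q) \sqcup \Sigma\Ind\Rep(Q)$, into four blocks: (A) non-injective indecomposable modules in $\Rep(Q)$; (B) the injectives $I_i$; (C) the shifts $\Sigma y$ for $y$ a non-injective indecomposable in $\Rep(Q)$; and (D) the shifts $\Sigma I_i$. Directly from \eqref{eq:define_W} and the definition $V^- = \Sigma^* V^+$, the subgroup $V^+_\Z$ is supported on positions of type (A) while $V^-_\Z$ is supported on positions of type (C); hence for each $x$ of type (A) or (C) the unit vector $e_x$ already lies in $V^+$ or $V^-$ respectively.

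For the vertices of types (B) and (D) the key input is \eqref{eq:injective_coeff}. Since $v^{f_i}$ has coefficient $\delta_{ij}$ at $I_j$ and $0$ at every $\Sigma I_j$, the difference $v^{f_i} - e_{I_i}$ is supported only on positions of types (A) and (C), with non-negative integer coefficients $\dim\Hom_{\cD^b(Q)}(S_i, M_x)$, and therefore lies in $V^+ + V^- \subseteq V^+_\Z + V^-_\Z$. Rearranging yields $e_{I_i} \in V^0 + V^+_\Z + V^-_\Z$. The identical argument applied to $v^{\Sigma f_i}$, using the remaining $\delta$-identities in \eqref{eq:injective_coeff}, places each $e_{\Sigma I_i}$ in the same set. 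Combining the four cases shows that every standard basis vector of $\N^{\sigma\hat{I}}$ lies in $V^+_\Z + V^0 + V^-_\Z$, whence so does every element of $\N^{\sigma\hat{I}}$.

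To justify the direct sum notation, I would observe that $V^+_\Z$ and $V^-_\Z$ meet trivially because their supports are disjoint subsets of $\sigma\hat{I}$, and that the projection onto the coordinates indexed by (B) and (D) annihilates both $V^+_\Z$ and $V^-_\Z$ while, by \eqref{eq:injective_coeff}, sending the generating family $\{v^{f_i}, v^{\Sigma f_i}\}_{i \in I}$ of $V^0$ onto the free family $\{e_{I_i}, e_{\Sigma I_i}\}_{i \in I}$. This forces $V^0 \cap (V^+_\Z \oplus V^-_\Z) = 0$, so the three summands are independent. I do not anticipate any genuine obstacle: the whole argument is a bookkeeping check, with all the nontrivial content already packaged into \eqref{eq:injective_coeff}.
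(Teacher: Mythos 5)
Your proposal is correct and follows essentially the same route as the paper's proof: both arguments rest entirely on \eqref{eq:injective_coeff}, the paper subtracting $v^0=\sum_i v(I_i)\,v^{f_i}+\sum_i v(\Sigma I_i)\,v^{\Sigma f_i}$ from a given $v$ and splitting the remainder by support, while you perform the equivalent bookkeeping on the unit vectors $e_x$ and then use closure under addition. Your explicit check that the sum is direct (via the projection onto the injective and shifted-injective coordinates) is a small addition that the paper leaves implicit, though the uniqueness is indeed what gets used later in Proposition \ref{prop:dominant_decomposition}.
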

\begin{proof}
  For any $v\in \N^{\sigma\hat{I}}$, we define $v^0=\sum b_i
  v^{f_i}+\sum b_i' v^{\Sigma f_i}$ such that $b_i=v(I_i)$ and
  $b_i'=v(\Sigma I_i)$. Define $v^+$ to be the restriction of $v-v^0$
  on $V^+_\Z$ and $v^-$ the restriction of $v-v^0$ on
  $V^-_\Z$. \eqref{eq:injective_coeff} guarantees that
  $v=v^++v^0+v^-$ is our desired decomposition.
\end{proof}

Denote the projections of $\N^{\sigma \hat{I}}$ to the three
summands in Lemma \ref{lem:decomposition_V} by $\pr^+$, $\pr^0$,
$\pr^-$ respectively. The following result is essentially known by \cite{LP_rep_alg}.
\begin{Prop}\label{prop:dominant_combinatorics}
  For any $w\in W^S_ \Z$, $v\in V^+_\Z$, if $w-C_q v\geq 0$,
  then $v\in V^+$, $w\in W^S$.
\end{Prop}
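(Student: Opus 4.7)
The plan is to adapt the Leclerc--Plamondon combinatorial argument (originally formulated in the generic-$q$ setting of graded quiver varieties) to our cyclic setting via Proposition \ref{prop:local_change}, while strengthening its conclusion from $\N$-valued to $\Z$-valued inputs.

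First, I would derive the coordinate formula
$$
(C_q v)_{\sigma z} \;=\; v_{\tau z} \;+\; v_z \;-\; \sum_{j\sim i} v_{\sigma(j,a)}, \qquad z=(i,a)\in\sigma\hat{I},
$$
directly from the defining formula of $C_q$. Under the identification $\sigma\hat{I}\leftrightarrow$ (representatives of) indecomposables in $\cD^b(Q)$ via the section $M_?$, the three kinds of terms on the right-hand side mirror the left and right endpoints and the indecomposable summands of the middle term of the Auslander--Reiten mesh at $M_z$, up to the wrap-around encoded by the covering $\pi:\Ind\cD^b(Q)\to\sigma\hat{I}$.

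Next, I would order the non-injective indecomposables $x$ of $\Rep(Q)$ by AR-depth (distance to the injective boundary measured by iterating $\tau^{-1}$) and induct on this depth. At each stage, for $x=(i_0,a_0)$ of depth $d$, I would pick a vertex $z=(i,qa_0)$ with $i\sim i_0$, so that $x=\sigma(i_0,qa_0)$ is a mesh neighbor of $z$ and hence $v_x$ appears with coefficient $-1$ in the formula above; geometrically, $z$ corresponds to a right-neighbor of $x$ in the AR quiver of $\cD^b(Q)$. The choice is to be made so that $M_z\neq S_\ell$ for any $\ell$ (forcing $w_{\sigma z}=0$); and so that the remaining coefficients $v_{\tau z}$, $v_z$ and the other $v_{\sigma(j,a)}$ are either zero (when the corresponding labels lie in $\Sigma\Ind\Rep(Q)$ or among injectives of $\Rep(Q)$) or already non-negative by the inductive hypothesis (the labels being non-injective indecomposables of strictly smaller AR-depth). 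The inequality $0\leq(w-C_qv)_{\sigma z}=-(C_qv)_{\sigma z}$ then rearranges to $v_x\geq 0$. A valid $z$ exists in types $ADE$ because every non-injective indecomposable $x$ admits at least one non-simple right-neighbor in the AR quiver of $\cD^b(Q)$, as can be checked directly from the explicit description of the forward AR triangles.

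After establishing $v\in V^+$, positivity of $w$ is immediate: the inequalities $(w-C_qv)_{\sigma S_\ell}\geq 0$ yield $w_\ell\geq(C_qv)_{\sigma S_\ell}$, whose right-hand side is non-negative by the same coordinate formula applied to the now-verified $v\in V^+$. The main obstacle I expect is the combinatorial bookkeeping in the inductive step: confirming that at every stage the chosen vertex $z$ exists with all auxiliary labels falling into the ``zero or already processed'' category. This reduces to a direct inspection of the AR quiver of $\cD^b(Q)$, uniform across types $ADE$ but requiring careful tracking of the wrap-around identifications on $\sigma\hat{I}$ near the boundary between $\Ind\Rep(Q)$ and $\Sigma\Ind\Rep(Q)$.
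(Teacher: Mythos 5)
Your inductive step has a sign problem that the bookkeeping cannot repair. At the chosen vertex $\sigma z$, $z=(i,qa_0)$, the hypothesis $w_{\sigma z}=0$ together with $(w-C_qv)_{\sigma z}\geq 0$ rearranges to
\begin{align*}
v_x \;\geq\; v_{\tau z}+v_{z}-\sum_{j\neq i_0,\ a_{ij}=-1} v_{\sigma(j,qa_0)},
\end{align*}
so the \emph{other} middle terms of the mesh ending at $M_z$ enter with a negative sign: knowing they are non-negative (by induction or otherwise) gives nothing, and you can conclude $v_x\geq 0$ only if they actually vanish, i.e.\ if every other predecessor of $M_z$ is injective or lies in $\Sigma\Ind\Rep(Q)$. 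This cannot always be arranged. In type $A_5$ with linear orientation, take $x=[2,3]$ in interval notation: its only non-simple AR-successor is $z=[2,4]$, and the mesh ending at $[2,4]$ has the extra middle term $[1,4]=P_4$, a non-injective module, so the single inequality at $\sigma z$ only yields $v_{[2,3]}\geq v_{[1,3]}+v_{[2,4]}-v_{[1,4]}$. Moreover the auxiliary label $\tau M_z$ typically has the \emph{same} AR-depth as $x$ (e.g.\ $x=S_2$, $z=I_2$, $\tau M_z=P_2$ in type $A_3$), so the proposed induction is not even well-founded as stated. The last step is likewise not immediate: for $v\geq 0$ supported on non-injectives, $(C_qv)_{\sigma S_\ell}=v_{\tau S_\ell}+v_{S_\ell}-\sum v_{\sigma(j,\cdot)}$ need not be non-negative (take $v=e_{P_2}$, $\ell=2$ in type $A_3$), so $w_\ell\geq (C_qv)_{\sigma S_\ell}$ does not by itself give $w_\ell\geq 0$; positivity of $w$ also needs global input, not one coordinate inequality.

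Any correct coordinate-wise argument would have to combine several mesh inequalities so that the unwanted middle terms cancel; one inequality per coordinate does not suffice. This is precisely where the paper takes a different route: after reducing to generic $q$ via Proposition \ref{prop:local_change}, it invokes Theorem \ref{thm:abelian_representative} together with the repetitive-algebra results of \cite{LP_rep_alg} (their Theorem 3.14 and Section 4.3): adding a sufficiently large projective makes $(v+v^P,w+w^P)$ the $l$-dominant pair of an actual module, and the uniqueness of the pair $(v',w')\in V^+\times W^S$ with $w'-C_qv'=w-C_qv$ then forces $(v',w')=(v,w)$, giving both $v\in V^+$ and $w\in W^S$ simultaneously. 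You either need that representation-theoretic input or a substantially more elaborate combinatorial scheme; as written, your mesh-by-mesh induction does not close.
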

\begin{proof}

 To verify the statement, by Proposition \ref{prop:local_change}, we can work in the case
where $q$ is not a root of unity instead. We then prove
it by using Theorem \ref{thm:abelian_representative} and \cite[Theorem
3.14]{LP_rep_alg}.

Let $\hat{A}$ denote the repetitive algebra of $A=\C Q$. By using
Syzygy functors in $\mod\hat{A}$ (the category of left $\hat{A}$-modules), we can
identify the sets $W^S$, $V^+$ with subsets of $\N^{\psi(\proj
  \hat{A})}$, $\N^{\psi(\Ind \mod\hat{A}-\proj\hat{A})}$ studied in \cite[Section 3.1]{LP_rep_alg}, \cf \cite[Rem
3.17]{LP_rep_alg}. From now on, we work in the context of
\cite{LP_rep_alg}. 

Denote $\tw=w-C_q v$. By Theorem
\ref{thm:abelian_representative}, there exists a unique $l$-dominant pair
$(v',w')$, $v'\in V^+$, $w'\in W^S$, such that $w'-C_q v'=\tw$.

To any
$\hat{A}$-module $N$ of dimension $w^N$, we associate the module
$\underline{N}$ defined in \cite[Lemma 3.12]{LP_rep_alg}. Its
dimension will be denoted by $(v^N,w^N)\in \N^{\psi(\Ind \mod\hat{A}-\proj\hat{A})}\times\N^{\psi(\proj
  \hat{A})}$. Moreover, the pair $(v^N,w^N)$ is $l$-dominant. Notice that we always
have $w^P-C_q v^P=0$ for any projective $P$ in $\proj\hat{A}$. Let us take
some projective $P$ with its dimension big enough, such that $v+v^P\geq 0$,
$w+w^P\geq 0$. Then $(v+v^P,w+w^P)$ is an $l$-dominant pair.  By
\cite[Theorem 3.14]{LP_rep_alg}, it determines the isoclass of an $\hat{A}$ module $N$ of dimension $w^N$ such that
$(v^N,w^N)=(v+v^P,w+w^P)$.


Since both $w'$ and $w^N$ are contained in $\N^{\psi(\proj \hat{A})}$ and
$w'-C_q v'=w^N-C_q v^N$, by
\cite[Section 4.3]{LP_rep_alg}, there exists some projective $\hat{A}$-modules $P^1$, $P^2$ such that
$(v'+v^{P^1},w'+w^{P^1})=(v^N+v^{P^2},w^N+w^{P^2})$. Then we have
$(v',w')=(v+v^P+v^{P^2}-v^{P^1},w+w^P+w^{P^2}-w^{P^1})$. Notice that $w^P+w^{P^2}-w^{P^1}$ is
not contained $W^S\otimes \Z$ unless it vanishes (in other words, $P\oplus P^2=P^1$). Because
$w'$ and $w$ are contained in $W^S\otimes\Z$. It follows that
$w^P+w^{P^2}-w^{P^1}$ vanishes. Therefore, we
obtain $(v',w')=(v,w)$.
\end{proof}



\begin{Prop}\label{prop:dominant_decomposition}
 For any $w\in W^S\oplus W^{\Sigma S}$, $v\in \N^{\sigma\hat{I}}$,
 assume that the pair $(v,w)$ is $l$-dominant, then we have a unique decomposition of $(v,w)$ into $l$-dominant pairs
$(v^+,w^+)$, $(v^0,w^0)$, $(v^-,w^-)$, such that $v^+\in V^+$, $v^0\in
V^0$, $v^-\in V^-$, $w^+\in W^S$,
$w^0\in W^0$, $w^-\in W^{\Sigma S}$, $v^++v^0+v^-=v$, $w^++w^0+w^-=w$,
and $w^0-C_q v^0=0$,.
\end{Prop}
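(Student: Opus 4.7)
The plan is to construct the three-fold decomposition directly from Lemma \ref{lem:decomposition_V} together with the identity $C_q v^{f_i} = w^{f_i}$ established in the preceding lemma (and its $\Sigma$-conjugate $C_q v^{\Sigma f_i} = w^{f_i}$, obtained by applying $\Sigma^*$ and noting that $w^{f_i} = e_{\sigma S_i} + e_{\sigma \Sigma S_i}$ is $\Sigma$-invariant while $C_q$ commutes with $\Sigma^*$). First, apply Lemma \ref{lem:decomposition_V} to write $v = v^+ + v^0 + v^-$ with $v^\pm \in V^\pm_\Z$ and $v^0 = \sum_i b_i v^{f_i} + \sum_i b_i' v^{\Sigma f_i}$, where $b_i = v(I_i)$ and $b_i' = v(\Sigma I_i)$. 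The two identities then yield
$$C_q v^0 = \sum_i (b_i + b_i')\, w^{f_i} \in W^0,$$
so setting $w^0 := C_q v^0$ makes $(v^0, w^0)$ tautologically $l$-dominant with $w^0 - C_q v^0 = 0$. Decomposing $w = w^S + w^{\Sigma S}$ along $W^S \oplus W^{\Sigma S}$, I define
$$w^+ := w^S - \sum_i (b_i + b_i') e_{\sigma S_i}, \qquad w^- := w^{\Sigma S} - \sum_i (b_i + b_i') e_{\sigma \Sigma S_i},$$
so that $w = w^+ + w^0 + w^-$ by construction.

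The next step is to verify the positivity and $l$-dominance of the two outer pairs. Subtracting the middle piece from $w - C_q v \geq 0$ gives
$$(w^+ + w^-) - C_q v^+ - C_q v^- \;=\; w - C_q v \;\geq\; 0.$$
The structural claim I will rely on is that the support of $C_q v^+$ lies in the ``positive half'' $\sigma(\Ind\Rep(Q)) \subset \hat{I}$, while the support of $C_q v^-$ lies in the complementary ``negative half'' $\sigma\Sigma(\Ind\Rep(Q))$. Granted this disjointness, the combined inequality splits coordinatewise into $w^+ - C_q v^+ \geq 0$ and $w^- - C_q v^- \geq 0$, which both forces $w^\pm \geq 0$ (so $w^+ \in W^S$, $w^- \in W^{\Sigma S}$) and yields $l$-dominance of $(v^+, w^+)$ and $(v^-, w^-)$ once non-negativity of $v^\pm$ is known. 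The latter follows by applying Proposition \ref{prop:dominant_combinatorics} to $(v^+, w^+)$ and its $\Sigma$-image to $(v^-, w^-)$, after reducing to the graded quiver variety setting through Proposition \ref{prop:local_change}.

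Uniqueness is essentially free: any admissible decomposition of $v$ produces a $v^0 \in V^0$ lying in the same coset of $v + (V^+_\Z \oplus V^-_\Z)$, and Lemma \ref{lem:decomposition_V} forces $v^0$ to be unique. Then $w^0 = C_q v^0$ is determined by the identity above, and finally $w^\pm$ are pinned down by their required supports in $W^S$ and $W^{\Sigma S}$ respectively.

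The main obstacle is the support-disjointness claim underlying Step~2: one must verify that the formula $C_q e_{(i,a)} = e_{(i,qa)} + e_{(i,q^{-1}a)} + \sum_{j\sim i} a_{ij} e_{(j,a)}$ preserves the partition $\hat{I} = \sigma\Ind\Rep(Q) \sqcup \sigma\Sigma\Ind\Rep(Q)$ induced by $\sigma\hat{I} \cong \Ind\cD^b(Q) = \Ind\Rep(Q)\sqcup \Sigma\Ind\Rep(Q)$. While the ``height-shift'' terms $e_{(i,q^{\pm 1}a)}$ clearly respect the partition (since $\sigma$ and $\sigma^{-1}$ preserve each half), the same-height ``$I$-adjacent'' terms $e_{(j,a)}$ require a genuine combinatorial check. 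It will likely be cleanest to carry out this verification in the generic-$q$ Nakajima category setting of \cite{KellerScherotzke2013} and then transport the conclusion back to the cyclic case via Proposition \ref{prop:local_change}.
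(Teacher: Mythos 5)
Your argument is essentially the paper's own proof: decompose $v$ via Lemma \ref{lem:decomposition_V}, set $w^0=C_q v^0$ and take $w^{\pm}$ to be the projections of $w-w^0$ onto $W^S_\Z$ and $W^{\Sigma S}_\Z$, split the dominance inequality using that $C_q$ sends $V^{+}_\Z$ into $W^{+}_\Z$ and $V^{-}_\Z$ into $W^{-}_\Z$, invoke Proposition \ref{prop:dominant_combinatorics} (and its $\Sigma$-image) to get positivity and $l$-dominance of the outer pairs, and deduce uniqueness from Lemma \ref{lem:decomposition_V} --- which is exactly the paper's argument, where the support fact appears as the unproved assertion $\pi^+C_q v^-=0$. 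The ``combinatorial check'' you defer needs no detour through the generic-$q$ Nakajima category: for a non-injective indecomposable module $y$ the support of $C_q e_{y}$ is contained in $\sigma\bigl(\{y,\tau^{-1}y\}\cup\{\text{middle terms of the AR sequence of }y\}\bigr)$, and all of these are modules precisely because $y$ is non-injective (note that your ``height-shift'' terms are not automatic either, since $\sigma^{-1}y=\sigma\tau^{-1}y$ lands in the positive half only thanks to this same non-injectivity).
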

\begin{proof}
Denote the projections of $\Z^{\hat{I}}$ onto $W^+_\Z$ and $W^-_\Z$ by $\pi^+$ and $\pi^-$ respectively.

We first take $v^0=\pr^0 V^0$ and denote the natural decomposition of $v^0$ in $V^0$ by $v^0=\sum_i b_i
v^{f_i}+\sum_{i}b_i' v^{\Sigma f_i}$. Further define
$w^0=\sum_i(b_i+b_i')w^{f_i}$. Define $v^+=\pr^+ v$, $v^-=\pr^- v$.

We have $\pi^+C_q v^-=0$. Therefore $C_q v^+$ equals $\pi^+C_q (v-v^0)$. Because $w^0-C_q v^0=0$, we have
$\pi^+(w-w^0)-C_q v^+=\pi^+(w-w^0)-\pi^+C_q (v-v^0)=\pi^+(w-C_qv)\geq 0$. By Proposition
\ref{prop:dominant_combinatorics}, $(v^+,\pi^+(w-w^0))$ is an $l$-dominant
pair with $v^+\in V^+$, $\pi^+(w-w^0)\in W^S$. Similarly, we obtain the
$l$-dominant pair $(v^-,\pi^-(w-w^0))$ with $v^-\in V^-$, $\pi^-(w-w^0)\in
W^{\Sigma S}$. 

Define $w^+=\pi^+(w-w^0)$, and $w^-=\pi^-(w-w^0)$. Then the
decomposition $(v,w)=(v^+,w^+)+(v^0,w^0)+(v^- ,w^-)$ satisfies the
conditions we impose.

Finally, let us prove the uniqueness. Lemma \ref{lem:decomposition_V} implies the
decomposition $v=v^++v^0+v^-$ is unique. Then $w^0$ is determined by
$v^0$. It follows that the decomposition $w=w^++w^0+w^-$ is unique.
\end{proof}

\begin{proof}[{proof of Theorem \ref{thm:triangular_decomposition}}]
  As a consequence of Proposition
  \ref{prop:dominant_decomposition}, \cf also Proposition \ref{prop:finite_dominant}, for any $w\in W^S\oplus W^{\Sigma S}$, there exists finitely many $v$ such that $(v,w)$ is
$l$-dominant. Combining Proposition \ref{prop:dominant_decomposition} and
  \eqref{eq:multiplication_can}, we get Theorem
  \ref{thm:triangular_decomposition} by induction on $v$.
\end{proof}

\subsection{Proof of the main results}

We explicitly calculate some relations of the generators of $(R,\tOtimes)$.
\begin{Prop}\label{prop:EK}
For any $i,j\in I$, we have
\begin{align}
  \can(0,e_{\sigma S_i})\tOtimes
  \can(v^{f_j},w^{f_j})&=t^{2\langle S_i,S_j\rangle}\can(v^{f_j},w^{f_j})\tOtimes
  \can(0,e_{\sigma S_i})\\
\can(0,e_{\sigma S_i})\tOtimes
    \can(v^{\Sigma f_j},w^{f_j})&=t^{-2\langle S_j,S_i\rangle}\can(v^{\Sigma f_j},w^{ f_j})\tOtimes
    \can(0,e_{\sigma  S_i})\\ 
  \can(0,e_{\sigma\Sigma S_i})\tOtimes
  \can(v^{f_j},w^{f_j})&=t^{-2\langle S_j, S_i\rangle}\can(v^{f_j},w^{f_j})\tOtimes
  \can(0,e_{\sigma \Sigma S_i})\\
  \can(0,e_{\sigma \Sigma S_i})\tOtimes
  \can(v^{\Sigma f_j},w^{ f_j})&=t^{2\langle S_i,S_j\rangle }\can(v^{\Sigma
    f_j},w^{f_j})\tOtimes
  \can(0,e_{\sigma \Sigma S_i}).
\end{align}
 \end{Prop}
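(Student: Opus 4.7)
The plan is to derive each of the four commutation relations by direct expansion via the multiplication formula \eqref{eq:multiplication_can}, combined with the explicit form \eqref{eq:dimension} of the bilinear form $d(\,,\,)$. I will outline the argument for the first identity in detail; the remaining three follow by the same procedure with appropriate $\Sigma$-twists.

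First I would compute the leading $t$-exponent of $\can(0,e_{\sigma S_i})\tOtimes\can(v^{f_j},w^{f_j})$. Since $w^{f_j}-C_qv^{f_j}=0$, formula \eqref{eq:dimension} collapses to
\[
d((v^{f_j},w^{f_j}),(0,e_{\sigma S_i}))=v^{f_j}\cdot\sigma^{*}e_{\sigma S_i}=v^{f_j}(S_i),\qquad d((0,e_{\sigma S_i}),(v^{f_j},w^{f_j}))=v^{f_j}(\tau S_i).
\]
By \eqref{eq:define_vf} the first pairing equals $\dim\Hom(S_j,S_i)=\delta_{ij}$. By Lemma \ref{lem:wrap} we have $v^{f_j}(\tau S_i)=\dim\Hom(S_j,M_{\tau S_i})=\dim\Hom(S_j,\tau S_i)$, and Serre--Auslander--Reiten duality in $\cD^b(Q)$ then identifies this with $\dim\Ext^1(S_i,S_j)$. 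Hence the leading coefficient of the product is $t^{\delta_{ij}-\dim\Ext^1(S_i,S_j)}=t^{\langle S_i,S_j\rangle}$, and exchanging the two factors negates this exponent, producing the desired ratio $t^{2\langle S_i,S_j\rangle}$ between the leading coefficients of the two orderings.

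The main remaining step, and the one I expect to be the hardest, is to show that each product equals its leading term alone, i.e.\ that no higher $v>v^{f_j}$ contributes to the sum in \eqref{eq:multiplication_can}. Writing $v=v^{f_j}+v'$ with $v'\ge 0$, the $l$-dominance of $(v,w^{f_j}+e_{\sigma S_i})$ becomes $C_qv'\le e_{\sigma S_i}$. Combining \eqref{eq:injective_coeff}, Lemma \ref{lem:decomposition_V} and the unique $l$-dominant decomposition of Proposition \ref{prop:dominant_decomposition}, the $V^0$-component of $v$ is forced to equal $v^{f_j}$: any $v'(I_k)\ne 0$ or $v'(\Sigma I_k)\ne 0$ would force $w^0$ to gain a summand $w^{f_k}$, whose $e_{\sigma\Sigma S_k}$- or $e_{\sigma S_k}$-part cannot be cancelled in the non-negative projections of $w-w^0$ onto $W^S$ and $W^{\Sigma S}$. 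The remaining $(V^+,V^-)$-parts must then satisfy $C_qv^+\le e_{\sigma S_i}$ and $C_qv^-\le 0$; their vanishing, including possible contributions from $\ker C_q$, would be established by Nakajima's transverse slice theorem, which locally reduces the question to the non-cyclic case where only $v'=0$ yields a non-empty regular stratum.

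The remaining three identities are treated analogously. Using $\Sigma^{*}C_q=C_q\Sigma^{*}$, the Cartan property $w^{f_j}-C_qv^{\Sigma f_j}=0$ still holds and \eqref{eq:dimension} again simplifies. The resulting pairings $v^{\Sigma f_j}(S_i)$, $v^{f_j}(\Sigma S_i)$, $v^{f_j}(\tau\Sigma S_i)$, and so on, reduce to $\Hom$- and $\Ext^1$-dimensions between the simples $S_i,S_j$ and their $\Sigma,\tau$-translates via Lemma \ref{lem:wrap} and Serre duality, yielding the four exponents $\pm 2\langle S_i,S_j\rangle$ and $\pm 2\langle S_j,S_i\rangle$ predicted in the statement. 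The truncation argument carries over verbatim.
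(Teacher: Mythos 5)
Your treatment of the first identity is, in substance, the paper's own proof: the leading coefficients are computed exactly as in the paper, via \eqref{eq:dimension}, Lemma \ref{lem:wrap} and Serre duality, giving $d((v^{f_j},w^{f_j}),(0,e_{\sigma S_i}))=\delta_{ij}$ and $d((0,e_{\sigma S_i}),(v^{f_j},w^{f_j}))=\dim\Ext^1(S_i,S_j)$, hence the ratio $t^{2\langle S_i,S_j\rangle}$; and the ``only the leading term survives'' step is reduced, as in the paper, to the unique triangular decomposition of Proposition \ref{prop:dominant_decomposition} together with \eqref{eq:injective_coeff} and Lemma \ref{lem:decomposition_V}, which forces $w^+=e_{\sigma S_i}$, $w^0=w^{f_j}$, $w^-=0$ and $v^0=v^{f_j}$. (The paper proves the second relation the same way and then deduces the third and fourth from the $\Sigma^*$-symmetry of the varieties instead of recomputing; your plan to recompute is fine.)

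The one point where your justification is off is the final vanishing $v^+=v^-=0$. Nakajima's transverse slice theorem compares a neighbourhood of a stratum of $\cycAffQuot(w)$ with a neighbourhood of the origin in another \emph{cyclic} variety $\cycAffQuot(w-C_qv^0)$; it does not reduce anything to the non-cyclic (generic $q$) case. The cyclic-to-graded reduction available in this paper is Proposition \ref{prop:local_change} (equivalence of singular Nakajima categories), which applies here because the residual weights $e_{\sigma S_i}$ and $0$ lie in $W^S$ resp. $W^{\Sigma S}$; combined with Proposition \ref{prop:w_gr_elements} (for $w=e_{\sigma S_i}$ the relevant variety is $\Rep(Q,e_i)$, a single orbit) or, purely combinatorially, with Proposition \ref{prop:dominant_combinatorics} and Theorem \ref{thm:abelian_representative} — note that the supports of $V^+$ and $V^-$ do not wrap around the cycle, so $e_{\sigma S_i}-C_qv^+\geq 0$ and $-C_qv^-\geq 0$ already force $v^+=v^-=0$ — one gets exactly the step you need. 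So the conclusion is correct and the overall route coincides with the paper's; just replace the appeal to the transverse slice theorem by these results (the paper itself is silent at this precise spot, but those are the tools its proof implicitly relies on).
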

 \begin{proof}
   We shall further prove that for each relation, either side consists of only the leading
   term\footnote{This situation is usually called \emph{special} or
     \emph{affine-minuscule}.} when it decomposes via
   \eqref{eq:multiplication_can}.

(i) We start by verifying the first relation. 

By Proposition \ref{prop:dominant_decomposition}, any $l$-dominant pair
$(v,e_{\sigma S_i}+w^{f_j})$ decompose into the sum of three
$l$-dominant pairs $(v^+,w^+)$, $(v^0,w^0)$, $(v^-,w^-)$. Applying
\eqref{eq:multiplication_can} to LHS of the first relation, we see the
term $\can(v,e_{\sigma S_i}+w^{f_j})$ has nonzero coefficients only if $v\geq
v^{f_j}$. So we obtain $v^0\geq v^{f_j}$ and, consequently, $w^0\geq
w^{f_j}$. It follows that the only possible decomposition of $w$ is
$w^+=e_{\sigma S_i}$, $w^0=w^{f_j}$, $w^-=0$. Consequently, $v$ has
only one possible decomposition: $v^+=v^-=0$,
$v^0=v^{f_j}$. Therefore, both sides are just multiples of the leading
term $\can(v^{f_j},e_{\sigma S_i}+w^{f_j})$.

The claim follows from the calculation of the $t$-power for the coefficients of the leading terms.
\begin{align*}
  d((0,e_{\sigma S_i}),(v^{f_j},w^{f_j}))&=e_{\sigma S_i}*\sigma^*
  v^{f_j}=v^{f_j}(\sigma^2 S_i)=\dim\Hom_{\cD^b(Q)}(S_j,\tau S_i)\\
&=\dim\Hom_{\cD^b(Q)}(S_i,\Sigma S_j),\\
d((v^{f_j},w^{f_j}),(0,e_{\sigma S_i}))&=v^{f_j}*\sigma^* e_{\sigma
  S_i}=v^{f_j}*e_{S_i}\\
&=\dim\Hom_{\cD^b(Q)}(S_j,S_i)=\dim\Hom_{\cD^b(Q)}(S_i,S_j).
\end{align*}

(ii) The verification of the second relation is similar.

By Proposition \ref{prop:dominant_decomposition}, any $l$-dominant pair
$(v,e_{\sigma S_i}+w^{f_j})$ decompose into the sum of three
$l$-dominant pairs $(v^+,w^+)$, $(v^0,w^0)$, $(v^-,w^-)$. Applying
\eqref{eq:multiplication_can} to LHS of the second relation, we see the
term $\can(v,e_{\sigma S_i}+w^{f_j})$ has nonzero coefficients only if $v\geq
v^{\Sigma f_j}$. So we obtain $v^0\geq v^{\Sigma f_j}$ and, consequently, $w^0\geq
w^{f_j}$. It follows that the only possible decomposition of $w$ is
$w^+=e_{\sigma S_i}$, $w^0=w^{f_j}$, $w^-=0$. Consequently, $v$ has
only one possible decomposition: $v^+=v^-=0$,
$v^0=v^{\Sigma f_j}$. Therefore, both sides are just multiples of the leading
term $\can(v^{\Sigma f_j},e_{\sigma S_i}+w^{f_j})$.

The claim follows from the calculation of the $t$-power for the coefficients of the leading terms.
\begin{align*}
d((0,e_{\sigma S_i}),( v^{\Sigma f_j},w^{f_j}))&=v^{f_j}(\Sigma \tau
S_i)\\
&=\dim\Hom _{\cD^b(Q)}(S_i,S_j)=\dim\Hom_{\cD^b(Q)}(S_i,S_j),\\
d((v^{\Sigma f_j},w^{f_j}),(0,e_{\sigma S_i}))&=\Sigma^* v^{f_j}*
\sigma^* e_{\sigma S_i}\\
&=\dim\Hom_{\cD^b(Q)}(S_j,\Sigma S_i).
\end{align*}

(iii)(iv) The automorphism $\Sigma^*$ on the dimension vectors $v,w$
induces isomorphisms of cyclic quiver varieties, which are compatible
with the (twisted) resitriction functors $\tRes^{w}_{w^1,w^2}$, $\res^w_{w^1,w^2}$, as well as the bilinear form $d(\ ,\ )$. Therefore,
the first two relations imply
\begin{align*}
  \can(0,\Sigma^*e_{\sigma S_i})\tOtimes
  \can(\Sigma^* v^{f_j},\Sigma^*w^{f_j})&=t^{2\langle S_i,S_j\rangle}\can(\Sigma^*v^{f_j},\Sigma^*w^{f_j})\tOtimes
  \can(0,\Sigma^*e_{\sigma S_i}),\\
\can(0,\Sigma^*e_{\sigma S_i})\tOtimes
    \can(\Sigma^*v^{\Sigma f_j},\Sigma^*w^{f_j})&=t^{-2\langle S_j,S_i\rangle}\can(\Sigma^*v^{\Sigma f_j},\Sigma^*w^{ f_j})\tOtimes
    \can(0,\Sigma^*e_{\sigma  S_i}).
\end{align*}
These are just the fourth and third relations respectively.
\end{proof}

\begin{Prop}\label{prop:EF}
In $(R,\tOtimes)$, for any $i,j\in I$, we have
\begin{align}\label{eq:EF}
  [\can(0,e_{\sigma S_i}),\can(0,e_{\sigma\Sigma
    S_j})]=\delta_{ij}(t-t^{-1})(\can(v^{f_i},w^{f_i})-\can(v^{\Sigma f_i},w^{\Sigma
    f_i})).
\end{align}
\end{Prop}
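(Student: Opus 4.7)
The strategy is to expand both products $\can(0,e_{\sigma S_i})\tOtimes\can(0,e_{\sigma\Sigma S_j})$ and $\can(0,e_{\sigma\Sigma S_j})\tOtimes\can(0,e_{\sigma S_i})$ via \eqref{eq:multiplication_can}, using Proposition \ref{prop:dominant_decomposition} to enumerate which $l$-dominant pairs $(v,w)$ with $w=e_{\sigma S_i}+e_{\sigma\Sigma S_j}$ can appear, and to compute the coefficients. The argument naturally splits into the cases $i\neq j$ and $i=j$.

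For $i\neq j$, Proposition \ref{prop:dominant_decomposition} forces the unique decomposition $w^+=e_{\sigma S_i}$, $w^0=0$, $w^-=e_{\sigma\Sigma S_j}$, and since each of the pairs $(v^+,e_{\sigma S_i})\in V^+\times W^S$ and $(v^-,e_{\sigma\Sigma S_j})\in V^-\times W^{\Sigma S}$ admits only the trivial $v^\pm=0$ (the underlying quiver variety for a single simple root reduces to a point), the sole $l$-dominant pair is $(0,e_{\sigma S_i}+e_{\sigma\Sigma S_j})$. Each product therefore reduces to a $t$-power times $\can(0,e_{\sigma S_i}+e_{\sigma\Sigma S_j})$, and a direct computation from \eqref{eq:dimension} gives $d((0,e_{\sigma S_i}),(0,e_{\sigma\Sigma S_j}))=d((0,e_{\sigma\Sigma S_j}),(0,e_{\sigma S_i}))=0$. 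Thus the two products coincide and the commutator vanishes.

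For $i=j$ we have $w=w^{f_i}$, and Proposition \ref{prop:dominant_decomposition} yields exactly three $l$-dominant pairs $(0,w^{f_i})$, $(v^{f_i},w^{f_i})$, $(v^{\Sigma f_i},w^{f_i})$. Consequently
\begin{align*}
\can(0,e_{\sigma S_i})\tOtimes\can(0,e_{\sigma\Sigma S_i})&=\can(0,w^{f_i})+c_+\can(v^{f_i},w^{f_i})+c_-\can(v^{\Sigma f_i},w^{f_i}),\\
\can(0,e_{\sigma\Sigma S_i})\tOtimes\can(0,e_{\sigma S_i})&=\can(0,w^{f_i})+c_+'\can(v^{f_i},w^{f_i})+c_-'\can(v^{\Sigma f_i},w^{f_i}),
\end{align*}
where both leading coefficients equal $1$, by the vanishing of the corresponding $d$-values computed as above. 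Hence the leading terms cancel in the commutator and the claim reduces to the two scalar identities $c_+-c_+'=t-t^{-1}$ and $c_--c_-'=-(t-t^{-1})$.

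To extract $c_\pm$ and $c_\pm'$, I would apply the geometric restriction formula \eqref{eq:restriction_pi} to the perverse sheaves $\pi(v^{f_i},w^{f_i})$ and $\pi(v^{\Sigma f_i},w^{f_i})$ along the Levi decomposition $w^{f_i}=e_{\sigma S_i}+e_{\sigma\Sigma S_i}$, then invert the change-of-basis formula \eqref{eq:decomposition_pi} to read off the coefficient of $\cL(0,e_{\sigma S_i})\boxtimes\cL(0,e_{\sigma\Sigma S_i})$ (and its swap) in $\tRes(\cL(v^{f_i},w^{f_i}))$ and $\tRes(\cL(v^{\Sigma f_i},w^{f_i}))$; since $R$ is dual to $K$, this pairing is precisely the sought-after coefficient. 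The hard part is this explicit geometric computation in the case $i=j$: although the vectors $v^{f_i}$, $v^{\Sigma f_i}$ look small, by \eqref{eq:define_vf} they can be supported at every indecomposable $M_x$ having $S_i$ in its composition series, so the varieties $\cycProjQuot(v^{f_i},w^{f_i})$ are nontrivial in general. To carry out the computation I would apply Nakajima's transverse slice theorem, as in the derivation of \eqref{eq:decomposition_pi}, to localize near a base point of $\cycAffQuot(w^{f_i})$ and reduce the task to matching a small combinatorial contribution against the expected $\pm(t-t^{-1})$.
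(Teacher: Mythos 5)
Your treatment of the case $i\neq j$ and your reduction of the case $i=j$ to the two identities $c_+-c_+'=t-t^{-1}$, $c_--c_-'=-(t-t^{-1})$ match the paper's setup, but the proof stops exactly where the real content lies: the coefficients $c_\pm$, $c_\pm'$ are never computed, and the route you sketch for computing them (localizing near a base point of $\cycAffQuot(w^{f_i})$ via the transverse slice theorem and ``matching a small combinatorial contribution'') is not an argument. In particular, ``invert the change-of-basis formula \eqref{eq:decomposition_pi}'' presupposes knowledge of the multiplicities $a_{v^{f_i},0;w^{f_i}}$ and $a_{v^{\Sigma f_i},0;w^{f_i}}$, i.e. of how $\pi(v^{f_i},w^{f_i})$ differs from $\cL(v^{f_i},w^{f_i})$, and you give no way to determine these; your own (correct) observation that $\cycProjQuot(v^{f_i},w^{f_i})$ need not be trivial shows that this cannot be waved away.

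The paper closes this gap with three concrete inputs that are absent from your proposal. First, for the \emph{sub}-dimension vectors appearing in the restriction along $w^{f_i}=e_{\sigma S_i}+e_{\sigma\Sigma S_i}$, the relevant varieties collapse: $\cycProjQuot(v^{f_i},e_{\sigma S_i})$ is a single point (so $\pi(v^{f_i},e_{\sigma S_i})=\cL(0,e_{\sigma S_i})$), and $\cycProjQuot(v,e_{\sigma\Sigma S_i})$ is empty for $v\neq 0$; hence in \eqref{eq:restriction_pi} only the summand with $(v^1,v^2)=(v^{f_i},0)$ survives, so that
\begin{align*}
\tRes^{w^{f_i}}_{e_{\sigma S_i},e_{\sigma\Sigma S_i}}\pi(v^{f_i},w^{f_i})
=\cL(0,e_{\sigma S_i})\boxtimes\cL(0,e_{\sigma\Sigma S_i})[1],
\end{align*}
the shift $[1]$ coming from an explicit evaluation of $d((0,e_{\sigma\Sigma S_i}),(v^{f_i},e_{\sigma S_i}))-d((v^{f_i},e_{\sigma S_i}),(0,e_{\sigma\Sigma S_i}))=1$. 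Second, comparing this with $\tRes\,\pi(0,w^{f_i})=\cL(0,e_{\sigma S_i})\boxtimes\cL(0,e_{\sigma\Sigma S_i})$ and using that restriction coefficients are nonnegative while $a_{v^{f_i},0;w^{f_i}}$ is bar-invariant forces $a_{v^{f_i},0;w^{f_i}}=0$; this positivity argument is what lets one pass from $\pi$ to $\cL$ and, dually, read off $c_+=t$, $c_-=t^{-1}$ in $\can(0,e_{\sigma S_i})\tOtimes\can(0,e_{\sigma\Sigma S_i})=\can(0,w^{f_i})+t\,\can(v^{f_i},w^{f_i})+t^{-1}\can(v^{\Sigma f_i},w^{f_i})$. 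Third, the reversed product is not computed separately but obtained from the first by the isomorphisms of cyclic quiver varieties induced by $\Sigma^*$, giving $c_+'=t^{-1}$, $c_-'=t$. Without these steps (or a worked-out substitute for them), your argument does not establish \eqref{eq:EF}.
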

\begin{proof}
  (i) Assume $i\neq j$. We deduce from Proposition
  \ref{prop:dominant_decomposition} that the only $l$-dominant pair
  $(v,e_{\sigma S_i}+e_{\sigma \Sigma S_j})$ is given by $v=0$. Let us
  calculate the bilinear forms:
  \begin{align*}
    d((0,e_{\sigma S_i}),(0,e_{\sigma \Sigma S_j}))=0,\\
    d((0,e_{\sigma \Sigma S_j}),(0,e_{\sigma S_i}))=0,\\
  \end{align*}
The statement follows.

(ii) Assume $i=j$. By Proposition \ref{prop:dominant_decomposition},
the only $l$-dominant pairs $(v,w^{f_i})$ are given by $v=0,\ v^{f_i},\
v^{\Sigma f_i}$. Then we have
\begin{align*}
  \pi(0,e_{\sigma S_i})&=\cL(0,e_{\sigma S_i}),\\
\pi(0,e_{\sigma \Sigma S_i})&=\cL(0,e_{\sigma \Sigma S_i}),\\
\pi(0,w^{f_i})&=\cL(0,w^{f_i}),
\end{align*}
\begin{align}\label{eq:compute_decompose}
\pi(v^{f_i},w^{f_i})&=\cL(v^{f_i},w^{f_i})+a_{v^{f_i},0;w^{f_i}}\cL(0,w^{f_i}),
\end{align}
\begin{align}
\pi(v^{\Sigma f_i},w^{f_i})&=\cL(v^{\Sigma f_i},w^{f_i})+a_{v^{\Sigma f_i},0;w^{f_i}}\cL(0,w^{f_i})
\end{align}

Notice that, by the definition of the GIT quotient and
the dimension vector $v^{f_i}$, the GIT quotient $\cycProjQuot(v^{f_i},e_{\sigma S_i})$ is a
point, \cf Example \ref{eg:explicit_EF}. Therefore, it is isomorphic
to the variety $\cycAffQuot(e_{\sigma S_i})=\cycAffQuot(0,e_{\sigma
    S_i})$. So we get $\pi(v^{f_i},e_{\sigma S_i})=1_{\set{0}}=\cL(0,e_{\sigma
      S_i})$. Similarly, we obtain
    \begin{align*}
      \pi(v^{\Sigma f_i},e_{\sigma S_i})&=\pi(v^{f_i},e_{\sigma
        S_i})=\cL(0,e_{\sigma S_i}),\\
      \pi(v^{\Sigma f_i},e_{\sigma \Sigma S_i})&=\pi(v^{f_i},e_{\sigma
        \Sigma S_i})=\cL(0,e_{\sigma \Sigma S_i}).
    \end{align*}

Notice that for any $v\in V^+$, $\cycProjQuot(v,e_{\sigma\Sigma S_i})$
is empty unless $v=0$. Therefore, by applying the restriction functor to LHS of
\eqref{eq:compute_decompose}, we obtain
\begin{align*}
  \tRes^{w^{f_i}}_{e_{\sigma S_i}, e_{\sigma \Sigma
    S_i}}&\pi(v^{f_i},w^{f_i})\\
&=\oplus_{v^1,v^2}\pi(v^1,e_{\sigma S_i})\boxtimes
  \pi(v^2,e_{\sigma \Sigma S_i})[d((v^2,e_{\sigma \Sigma S_i}),(v^1,e_{\sigma S_i}))-d((v^1,e_{\sigma S_i}),(v^2,e_{\sigma \Sigma S_i}))]\\
&=\pi(v^{f_i},e_{\sigma S_i})\boxtimes
  \pi(0,e_{\sigma \Sigma S_i})[1]\\
&=\cL(0,e_{\sigma S_i})\boxtimes
  \cL(0,e_{\sigma \Sigma S_i})[1].
\end{align*}
In other word, the external tensor $\cL(0,e_{\sigma S_i})\boxtimes
  \cL(0,e_{\sigma \Sigma S_i})$ will have coefficient $t$ when we
  apply the restriction functor to $\LHS$ of \eqref{eq:compute_decompose}.

The following relation is obvious by definition
\begin{align*}
  \tRes^{w^{f_i}}_{e_{\sigma S_i},e_{\sigma \Sigma
    S_i}}\pi(0,w^{f_i})&=\pi(0,e_{\sigma S_i})\boxtimes
  \pi(0,e_{\sigma \Sigma S_i})\\
&=\cL(0,e_{\sigma S_i})\boxtimes
  \cL(0,e_{\sigma \Sigma S_i}).
\end{align*}
Therefore, by applying the restriction functor to the $\RHS$ of
\eqref{eq:compute_decompose}, the second term will contribute an
external tensor $\cL(0,e_{\sigma S_i})\boxtimes
  \cL(0,e_{\sigma \Sigma S_i})$ with the bar-invariant coefficient
  $a_{v^{f_i},0;w^{f_i}}$. Because the coefficients appearing
  under the restriction functor are nonnegative, in order for this
  external product to have coefficient $t$ as in $\LHS$ of \eqref{eq:compute_decompose}, we
must have \footnote{It might be possible to verify this statement by studying the fiber of
  $\cycProjQuot(v^{f_i},w^{f_i})$ over the origin of
  $\cycAffQuot(w^{f_i})$, \cf Example \ref{eg:explicit_EF}.} $a_{v^{f_i},0;w^{f_i}}=0$.

Similarly, we have $a_{v^{\Sigma f_i},0;w^{f_i}}=0$ and 
\begin{align*}
  \tRes^{w^{f_i}}_{e_{\sigma S_i},e_{\sigma \Sigma
    S_i}}\pi(v^{\Sigma f_i},w^{f_i})=\cL(0,e_{\sigma S_i})\boxtimes
  \cL(0,e_{\sigma \Sigma S_i})[-1].
\end{align*}

By taking the dual of the restriction functor, we obtain the following
equation
\begin{align}\label{eq:EF_1}
  \can(0,e_{\sigma S_i})\tOtimes \can(0,e_{\sigma \Sigma
    S_i})=\can(0,w^{f_i})+t\can(v^{f_i},w^{f_i})+t^{-1}\can(v^{\Sigma f_i},w^{f_i}).
\end{align}
Similarly, by using the isomorphisms of cyclic quiver varieties
induced by the automorphism $\Sigma^*$ on the dimension vectors
$v,w$, we obtain
\begin{align}\label{eq:EF_2}
  \can(0,e_{\sigma \Sigma S_i})\tOtimes \can(0,e_{\sigma  S_i})=\can(0,w^{f_i})+t\can(v^{\Sigma f_i},w^{f_i})+t^{-1}\can(v^{f_i},w^{f_i}).
\end{align}
The Propositions follows.
\end{proof}

\begin{Eg}\label{eg:explicit_EF}
  Let us continue Example \ref{eg:SL_2} and verify Proposition
  \ref{prop:EF} for the case $\msf{sl}_2$.

In this case, since $I=\set{1}$, we drop the subscript $i$ for
simplicity.

We first consider the decompositions of perverse sheaves (and their shifts)
\begin{align}\label{eq:decompose_S_f}
  \pi(v^f,w^f)=\cL(v^f,w^f)+a_{v^f,0;w^f}\cL(0,w^f),
\end{align}
\begin{align}
  \pi(v^{\Sigma f},w^f)=\cL(v^{\Sigma f},w^f)+a_{v^{\Sigma f},0;w^f}\cL(0,w^f).
\end{align}
In fact, we can compute the coefficients directly as
follows. The smooth quiver variety $\cycProjQuot(v^f,w^f)$ is the $\C^*$-quotient of variety
$\set{(\beta,\alpha)|\C\xla{\beta}\C \xla{\alpha} \C,\ker\beta=0}$
where the torus $\C^*$ naturally acts on $\beta$. Therefore,
it is simply the vector space $\C$. The quiver variety
$\cycAffQuot(v^f,w^f)$ is simply the vector space $\C$. The
projection map from $\cycProjQuot(v^f,w^f)$ to $\cycAffQuot(v^f,w^f)$
sending $(\beta,\alpha)$ to $\beta\alpha$ is an
isomorphism. Therefore, the coefficient $a_{v^f,0;w^f}$ vanishes and
we have $$\pi(v^f,w^f)=\cL(v^f,w^f).$$ The automorphism $\Sigma^*$ on
the dimension vectors $v,w$ induces isomorphisms of quiver
varieties. So we similarly have $a_{v^{\Sigma f},0;w^f}=0$ and
$$\pi(v^{\Sigma f},w^f)=\cL(v^{\Sigma f},w^f).$$ 


Also notice that, because the GIT quotient $\cycProjQuot(v^{f
},e_{\sigma S })$ is simply a point, we have $\pi(v^{f },e_{\sigma S
})=1_{\set{0}}=\can(0,e_{\sigma S})$. Similarly, $\pi(v^{\Sigma f },e_{\sigma S
})=1_{\set{0}}=\can(0,e_{\sigma S})$.

Therefore, we obtain 
\begin{align*}
  \tRes^{w^{f}}_{e_{\sigma S },e_{\sigma \Sigma
    S }}&\pi(v^{f },w^{f })\\
&=\oplus_{v^1,v^2}\pi(v^1,e_{\sigma S})\boxtimes
  \pi(v^2,e_{\sigma \Sigma S})[d((v^2,e_{\sigma \Sigma
    S}),(v^1,e_{\sigma S}))-d((v^1,e_{\sigma S}),(v^2,e_{\sigma \Sigma S}))]\\
&=\pi(v^{f },e_{\sigma S })\boxtimes
  \pi(0,e_{\sigma \Sigma S })[1]\\
&=\cL(0,e_{\sigma S })\boxtimes
  \cL(0,e_{\sigma \Sigma S })[1],
\end{align*}
and similarly,
\begin{align*}
  \tRes^{w^{f}}_{e_{\sigma S },e_{\sigma \Sigma S }}\pi(v^{\Sigma f
  },w^{f })&=\cL(0,e_{\sigma S})\boxtimes \cL(0,e_{\sigma \Sigma S
  })[-1].
\end{align*}

By isomorphisms of quiver varieties induced by the automorphism
$\Sigma^*$, the above relations imply
\begin{align*}
  \tRes^{w^{f}}_{e_{\sigma \Sigma S },e_{\sigma  S }}\pi(v^{\Sigma f
  },w^{f })&=\cL(0,e_{\sigma \Sigma S })\boxtimes
  \cL(0,e_{\sigma S })[1],\\
  \tRes^{w^{f}}_{e_{\sigma\Sigma  S },e_{\sigma 
    S }}\pi(v^{ f },w^{f })&=\cL(0,e_{\sigma\Sigma S })\boxtimes
  \cL(0,e_{\sigma S })[-1].
\end{align*}

The following equations are obvious by definition.
\begin{align*}
  \tRes^{w^{f }}_{e_{\sigma S },e_{\sigma \Sigma
    S }}\pi(0,w^{f })=&\cL(0,e_{\sigma S })\boxtimes
  \cL(0,e_{\sigma \Sigma S }),\\
  \tRes^{w^{f }}_{e_{\sigma \Sigma S },e_{\sigma S }}\pi(0,w^{f })=&\cL(0,e_{\sigma\Sigma S })\boxtimes
  \cL(0,e_{\sigma S }).
\end{align*}

Equations \eqref{eq:EF_1} and \eqref{eq:EF_2} are obtained by taking
the dual of the restriction functors $\tRes^{w^{f }}_{e_{\sigma S },e_{\sigma \Sigma
    S }}$ and $\tRes^{w^{f }}_{e_{\sigma \Sigma S },e_{\sigma S }}$.
\end{Eg}

\begin{Prop}\label{prop:KK}
For any $i,j\in I$, we have
\begin{align}
  \can(v^{f_i},w^{f_i})\tOtimes
  \can(v^{f_j},w^{f_j})&=t^{\langle S_i,S_j\rangle -\langle
    S_j,S_i\rangle }\can(v^{f_i}+v^{f_j},w^{f_i}+w^{f_j})\\
  \can(v^{f_i},w^{f_i})\tOtimes
  \can(v^{\Sigma f_j},w^{f_j})&=t^{\langle S_i,S_j\rangle -\langle
    S_j,S_i\rangle }\can(v^{f_i}+v^{\Sigma f_j},w^{f_i}+w^{f_j})\\
  \can(v^{\Sigma f_i},w^{ f_i})\tOtimes
  \can(v^{\Sigma f_j},w^{ f_j})&=t^{\langle S_i,S_j\rangle -\langle
    S_j,S_i\rangle }\can(v^{\Sigma f_i}+v^{\Sigma f_j},w^{f_i}+w^{f_j}).
 \end{align}
\end{Prop}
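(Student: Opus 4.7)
The plan is to mimic the proof of Proposition \ref{prop:EK}: first show that each of the three products is \emph{special} in the sense that its decomposition via \eqref{eq:multiplication_can} contains only the leading term, and then read off that leading $t$-power from the bilinear form $d(\ ,\ )$ of \eqref{eq:dimension}.

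For specialness, denote the two factors by $(v^{(1)},w^{f_i})$ and $(v^{(2)},w^{f_j})$, with each $v^{(k)}\in\{v^{f_k},v^{\Sigma f_k}\}$. By \eqref{eq:multiplication_can} a term $\can(v,w^{f_i}+w^{f_j})$ on the right requires $v\geq v^{(1)}+v^{(2)}$ and $(v,w^{f_i}+w^{f_j})$ to be $l$-dominant. Since $w^{f_i}+w^{f_j}\in W^0$, Proposition \ref{prop:dominant_decomposition} forces the $W^S$ and $W^{\Sigma S}$ parts of the decomposition of $w$ to vanish, hence $v\in V^0$. Writing $v=\sum_k(b_k v^{f_k}+b'_k v^{\Sigma f_k})$ with $b_k+b'_k$ matching the multiplicity of $w^{f_k}$ in $w^{f_i}+w^{f_j}$ produces a short explicit list of candidates. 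The crux is that $v^{f_k}$ and $v^{\Sigma f_k}$ are incomparable: from \eqref{eq:define_vf} together with $\Ext^1(S_k,S_k)=0$ (which holds in type $\mathbb{A}\mathbb{D}\mathbb{E}$ because the quiver has no loops) one obtains $v^{f_k}(S_k)=1$ while $v^{\Sigma f_k}(S_k)=v^{f_k}(\Sigma S_k)=0$, and symmetrically at $\Sigma S_k$. Hence the constraint $v\geq v^{(1)}+v^{(2)}$ kills every candidate except the leading one in all three cases.

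For the leading $t$-power, the exponent is $d((v^{(2)},w^{f_j}),(v^{(1)},w^{f_i}))-d((v^{(1)},w^{f_i}),(v^{(2)},w^{f_j}))$. Since the lemma $w^{f_k}-C_qv^{f_k}=0$ also yields $w^{f_k}-C_qv^{\Sigma f_k}=0$ by $\Sigma^*$-symmetry, the first summand in \eqref{eq:dimension} vanishes and
\[
d((v^{(1)},w^{f_i}),(v^{(2)},w^{f_j}))=v^{(1)}\cdot\sigma^* w^{f_j}=v^{(1)}(S_j)+v^{(1)}(\Sigma S_j).
\]
Substituting \eqref{eq:define_vf} gives $\dim\Hom(S_{?},S_j)+\dim\Hom(S_{?},\Sigma S_j)=2\delta_{?,j}-\langle S_?,S_j\rangle$, where the subscript $?$ is either $i$ or obtained from $i$ by the involution $\Sigma^*$ (which simply reorders the two summands and leaves the total unchanged). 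Subtracting the analogous expression with $i,j$ swapped, the Kronecker deltas cancel and we are left with $\langle S_i,S_j\rangle-\langle S_j,S_i\rangle$, as claimed in each of the three identities.

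Finally, the third identity also follows from the first by applying the $\Sigma^*$-isomorphism of cyclic quiver varieties, which intertwines $\tRes$ and preserves $d(\ ,\ )$, exactly as in the proof of Proposition \ref{prop:EK}(iii)(iv). The main obstacle is the bookkeeping in step (i), most delicately for the second identity in the diagonal case $i=j$: among the three candidates $2v^{f_i}$, $v^{f_i}+v^{\Sigma f_i}$, $2v^{\Sigma f_i}$, one must rule out the first and third against the leading middle candidate, and it is exactly the incomparability of $v^{f_i}$ and $v^{\Sigma f_i}$ recorded above that does the job.
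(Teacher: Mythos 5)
Your overall plan---prove each product is \emph{special} and then read the leading $t$-power off the bilinear form $d(\ ,\ )$---is exactly the paper's, and your exponent computation is correct (in fact slightly cleaner than the paper's: using $w^{f_k}-C_qv^{\Sigma f_k}=0$ to kill the first summand of \eqref{eq:dimension}, and noting that $v^{f_i}(S_j)+v^{f_i}(\Sigma S_j)$ is unchanged under $\Sigma^*$, handles all three identities at once). The flaw is in the specialness step. The inference ``$w^{f_i}+w^{f_j}\in W^0$, hence by Proposition \ref{prop:dominant_decomposition} the $W^S$- and $W^{\Sigma S}$-parts of the decomposition vanish, hence $v\in V^0$'' is false: the decomposition in Proposition \ref{prop:dominant_decomposition} is attached to the pair $(v,w)$, not to $w$ alone, and for a general $l$-dominant pair with this $w$ the parts $w^+,w^-$ need not vanish. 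Concretely, in type $A_2$ (quiver $2\to 1$) the pair $(e_{S_1},\,w^{f_1}+w^{f_2})$ is $l$-dominant, since $w^{f_1}+w^{f_2}-C_q e_{S_1}=e_{\sigma P_2}+e_{\sigma\Sigma S_1}+e_{\sigma\Sigma S_2}\geq 0$, yet $e_{S_1}\notin V^0$; likewise $(0,\,w^{f_i}+w^{f_j})$ is $l$-dominant and its decomposition has $w^+=e_{\sigma S_i}+e_{\sigma S_j}\neq 0$. So your ``short explicit list of candidates'' does not enumerate all $l$-dominant pairs with this $w$, and as written the argument does not exclude contributions from pairs lying outside $V^0$ (the proposition survives only because such stray pairs happen not to satisfy $v\geq v^{(1)}+v^{(2)}$, which is precisely what must be proved).

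The repair reverses the order of your deductions and needs one extra fact. First impose $v\geq v^{(1)}+v^{(2)}$ and look at the injective coordinates: by \eqref{eq:injective_coeff} this forces $b_k=v(I_k)$, $b_k'=v(\Sigma I_k)$ to satisfy $b_k+b_k'\geq$ the multiplicity of $w^{f_k}$ in $w^{f_i}+w^{f_j}$; since the decomposition also gives $w^0=\sum_k(b_k+b_k')w^{f_k}\leq w$, equality holds, so $w^+=w^-=0$ and $v^0$ is pinned down (at that point your incomparability observation, $v^{f_k}(S_k)=1=v^{\Sigma f_k}(\Sigma S_k)$ versus $v^{f_k}(\Sigma S_k)=0=v^{\Sigma f_k}(S_k)$, correctly singles out the leading candidate inside $V^0$). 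But you must still show that the $V^+$- and $V^-$-components of $v$ vanish, i.e.\ that an $l$-dominant pair $(u,0)$ with $u\in V^{\pm}$ forces $u=0$; neither the reduction to $V^0$ nor the incomparability filter addresses this, and it is exactly the content hidden in the paper's terse ``we deduce from Proposition \ref{prop:dominant_decomposition} that there exists no $l$-dominant pair with $v$ strictly larger.''
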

\begin{proof}
  We deduce from \ref{prop:dominant_decomposition} that there exists
  no $l$-dominant pair $(v,w^{f_i}+w^{f_j})$ such that $v>
  v^{f_i}+v^{f_j}$ or $v>v^{f_i}+v^{\Sigma f_j}$ or $v>v^{\Sigma f_i}+v^{ f_j}$ or $v> v^{\Sigma
    f_i}+v^{\Sigma f_j}$. Therefore, the products in the statements
  only consist of the leading terms in \eqref{eq:multiplication_can}.

We only check the first product. The verification for the other
products are similar. It is straightforward to check that
\begin{align*}
d((v^{f_i},w^{f_i}),(v^{f_j},w^{f_j}))&=v^{f_i}\cdot \sigma^*
w^{f_j}=v^{f_i}\cdot e_{S_j}+v^{f_i}\cdot e_{\Sigma
  S_j}\\
&=\dim\Hom(S_i,S_j)+\dim \Ext(S_i,S_j),  
\end{align*}
and similarly
\begin{align*}
d((v^{f_j},w^{f_j}),(v^{f_i},w^{f_i}))=\dim\Hom(S_j,S_i)+\dim \Ext(S_j,S_i).  
\end{align*}
The statement follows.
\end{proof}

The following relations have already been proved in
\cite{HernandezLeclerc11}. We compute them for completeness.
\begin{Prop}\label{prop:Serre_relation}
  (i) For any $i,j\in I$ such that $\Ext_{\C Q}^1(S_j,S_i)=\C$, we have
  \begin{align*}
    (\can(0,e_{\sigma S_i})^{\otimes 2})\otimes \can(0,e_{\sigma S_j})-(t+t^{-1})
    \can(0,e_{\sigma S_i})\otimes \can(0,e_{\sigma S_j})\otimes  \can(0,e_{\sigma S_i})\\+
    \can(0,e_{\sigma S_j})\otimes (\can(0,e_{\sigma})
      S_i)^{\otimes 2}=0
  \end{align*}
 (ii) For any $i,j\in I$ such that $\Ext_{\C Q}^1(S_i,S_j)=\C$, we have
  \begin{align*}
    (\can(0,e_{\sigma S_i})^{\otimes 2})\otimes \can(0,e_{\sigma S_j})-(t+t^{-1})
    \can(0,e_{\sigma S_i})\otimes \can(0,e_{\sigma S_j})\otimes  \can(0,e_{\sigma S_i})\\+
    \can(0,e_{\sigma S_j})\otimes( \can(0,e_{\sigma})
      S_i)^{\otimes 2}=0
  \end{align*}
 (iii) In $(R,\otimes)$, For any $i,j\in I$ such that $\Ext_{\C Q}^1(S_j,S_i)=0$, we have
  \begin{align*}
    [\can(0,e_{\sigma S_i}),\can(0,e_{\sigma S_j})]=0.
  \end{align*}
\end{Prop}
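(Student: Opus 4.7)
The plan is to verify the three relations through explicit computation of the twisted product $\otimes$ on the relevant pieces of $(R,\otimes)$, using the combinatorics of $l$-dominant pairs established in Proposition \ref{prop:dominant_decomposition} and the restriction formula \eqref{eq:multiplication_can}. Throughout, Proposition \ref{prop:local_change} lets me replace any cyclic quiver variety whose weight lies in $W^+$ by the corresponding graded quiver variety, which keeps all perverse-sheaf computations in the well-studied framework of \cite{HernandezLeclerc11} and \cite{LP_rep_alg}.

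For case (iii), I read the hypothesis as forcing $\Ext^1(S_i,S_j)=0=\Ext^1(S_j,S_i)$, so that $i\not\sim j$ in the Dynkin diagram or $i=j$. Proposition \ref{prop:dominant_decomposition} then constrains the unique $l$-dominant pair over $e_{\sigma S_i}+e_{\sigma S_j}$ to be $(0,e_{\sigma S_i}+e_{\sigma S_j})$, so both $\can(0,e_{\sigma S_i})\tOtimes\can(0,e_{\sigma S_j})$ and its reverse reduce to the leading term. A direct application of \eqref{eq:dimension} shows that the exponents $d((0,e_{\sigma S_i}),(0,e_{\sigma S_j}))$ and $d((0,e_{\sigma S_j}),(0,e_{\sigma S_i}))$ both vanish, and the antisymmetric twist $\langle S_i,S_j\rangle_a$ built into $\otimes$ also vanishes under the hypothesis. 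Hence the two products agree and the commutator is zero.

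For cases (i) and (ii), which are symmetric under the involution $\Sigma^*$ on dimension vectors exactly as exploited in the proofs of Propositions \ref{prop:EK} and \ref{prop:EF}, it suffices to handle (i). I would work at the weight $w=2e_{\sigma S_i}+e_{\sigma S_j}$, enumerate via Proposition \ref{prop:dominant_decomposition} the finitely many $l$-dominant pairs $(v,w)$, and identify for each the perverse sheaf $\cL(v,w)$ together with the decomposition coefficients $a_{v,v';w}(t)$ in $\pi(v,w)=\sum a_{v,v';w}(t)\cL(v',w)$; by Proposition \ref{prop:local_change} these coincide with the data for the corresponding type $A_2$ graded quiver variety, which is classical. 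Iterating \eqref{eq:multiplication_can} together with its $t$-power from $d(\cdot,\cdot)$ and applying the antisymmetric Euler-form twist at each stage, I expand each of $\can(0,e_{\sigma S_i})^{\otimes 2}\otimes\can(0,e_{\sigma S_j})$, $\can(0,e_{\sigma S_i})\otimes\can(0,e_{\sigma S_j})\otimes\can(0,e_{\sigma S_i})$, and $\can(0,e_{\sigma S_j})\otimes\can(0,e_{\sigma S_i})^{\otimes 2}$ in the basis $\{\can(v,w)\}$, and verify that the alternating combination with coefficients $1,-(t+t^{-1}),1$ annihilates each coefficient.

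The main obstacle is the bookkeeping of $t$-exponents: each application of the restriction formula contributes an intrinsic shift from $d(\cdot,\cdot)$, which must be combined with the antisymmetric Euler twist $t^{-\Hf\langle\Phi(w^1),\Phi(w^2)\rangle_a}$ defining $\otimes$ so that the collapse of the three monomials leaves precisely the Serre coefficient $[2]_t$. This is the same delicate computation underlying Lusztig's original proof of the Serre relations via perverse sheaves on the moduli of quiver representations, and it is ultimately the reason the antisymmetric twist is the correct normalization here, as emphasized in the footnote preceding Proposition \ref{prop:EK}.
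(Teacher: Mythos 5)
Your handling of (iii) is fine (reading the hypothesis as saying that $i,j$ are non-adjacent, which is clearly what is intended), and for (i) your strategy --- expand all three monomials at the weights $e_{\sigma S_i}+e_{\sigma S_j}$ and $2e_{\sigma S_i}+e_{\sigma S_j}$ using Proposition \ref{prop:dominant_decomposition}, the formula \eqref{eq:multiplication_can} and the extra Euler-form twist --- is exactly the paper's strategy. However, your reduction of (ii) to (i) via $\Sigma^*$ does not work. The involution $\Sigma^*$ sends $e_{\sigma S_i}$ to $e_{\sigma\Sigma S_i}$, i.e.\ it maps the generators $\can(0,e_{\sigma S_i})$ of $R^+$ to the generators $\can(0,e_{\sigma\Sigma S_i})$ of $R^-$; applying it to relation (i) therefore produces the Serre relation among the \emph{negative} generators, not relation (ii). (In Propositions \ref{prop:EK} and \ref{prop:EF} this trick works precisely because each product there mixes an $S_i$- with a $\Sigma S_i$-type vertex, so $\Sigma^2=1$ swaps the two factors; here all factors are of $S$-type.) Cases (i) and (ii) differ by the orientation of the arrow between $i$ and $j$ inside the fixed quiver $Q$, and no symmetry of the construction reverses that orientation; the two cases carry genuinely different intermediate data. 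In the paper this is visible in the pair $(\delta,\chi)$ with $\delta=\delta_{\tau S_j,S_i}$ and $\chi=\langle S_i,S_j\rangle_a$, which equals $(1,1)$ in case (i) and $(0,-1)$ in case (ii), and both cases are treated by one computation carrying $(\delta,\chi)$ along. As written, your argument leaves (ii) unproved.

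Second, even for (i) you stop at the level of a plan. The substance of the proof is the explicit determination of the pairwise products, e.g.\ that with $w'=e_{\sigma S_i}+e_{\sigma S_j}$ and $w=2e_{\sigma S_i}+e_{\sigma S_j}$ one has
\begin{align*}
\can(0,e_{\sigma S_i})\otimes \can(0,e_{\sigma S_j})&=t^{-\Hf\chi}\bigl(\can(0,w')+t^{\delta}\can(e_{S_i},w')\bigr),\\
\can(0,e_{\sigma S_i})\otimes \can(0,w')&=t^{-\Hf\chi}\bigl(\can(0,w)+t^{\delta-1}\can(e_{S_i},w)\bigr),\\
\can(0,e_{\sigma S_i})\otimes \can(e_{S_i},w')&=t^{-\Hf\chi}\,t\,\can(e_{S_i},w),
\end{align*}
together with their opposite-order versions; these rest on the computed values of $d(\ ,\ )$ and on the vanishing of the relevant multiplicities $a_{v,v';w}$, established as in the proof of Proposition \ref{prop:EF}. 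Saying you ``would enumerate, identify the coefficients, and verify the alternating sum'' asserts rather than proves the cancellation with coefficient $t+t^{-1}$; the bookkeeping you flag as the main obstacle is precisely the content that needs to be exhibited (and once it is, the $(\delta,\chi)$ parametrization handles (i) and (ii) simultaneously, removing the need for any symmetry argument).
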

\begin{proof}
(i)(ii) 
Notice that $i\neq j$. Denote $\delta_{\tau S_j,S_i}$ by $\delta$ and $\langle
S_i,S_j\rangle_a$ by $\chi$. Then the situations (i) and (ii) correspond to the
cases  $(\delta,\chi)=(1,1)$ and $(\delta,\chi)=(0,-1)$ respectively. For any pairs $(v^1,w^1)$, $(v^2,w^2)$, we define $\braket{(v^1,w^1),(v^2,w^2)}_a$ to be $\braket{w^1,w^2}_a$. Let us denote 
  \begin{align*}
    w'&=e_{\sigma S_i}+e_{\sigma S_j},\\
 w&=2e_{\sigma S_i}+e_{\sigma
      S_j}.
\end{align*}

We need the following coefficients because the
multiplication $\tilde{\otimes}$ is replaced by the twisted multiplication $\otimes$
\begin{align*}
A&=t^{-\Hf\braket{(0,e_{\sigma S_i}),(0,e_{\sigma S_j})}_a},\\
B&=t^{-\Hf\braket{(e_{S_i},e_{\sigma S_i}),(0,e_{\sigma S_j})}_a},\\
C&=t^{-\Hf\braket{(0,e_{\sigma S_i}),(0,w')}_a},\\
D&=t^{-\Hf\braket{(e_{S_i},e_{\sigma S_i}),(0,w')}_a},\\
E&=t^{-\Hf\braket{(0,e_{\sigma S_i}),(e_{S_i},w')}_a}.
  \end{align*}
It follows that $A=B=C=D=E=t^{-\Hf\chi}$.

First compute the following bilinear forms
\begin{align*}
  d((e_{S_i},e_{\sigma S_i}),(0,e_{\sigma S_j}))=0,\\
  d((0,e_{\sigma S_j}),(e_{S_i},e_{\sigma S_i}))=\delta,\\
  d((e_{S_i},e_{\sigma S_i}),(0,w'))=1,\\
  d((0,w'),(e_{S_i},e_{\sigma S_i}))=\delta,\\
  d((0,e_{\sigma S_i}),(e_{S_i},w'))=0,\\
  d((e_{S_i},w'),(0,e_{\sigma S_i}))=1.
\end{align*}

Similar to the proof of Proposition \ref{prop:EF}, we have the
following decompositions
\begin{align*}
  \can(0,e_{\sigma S_i})\otimes \can(0,e_{\sigma S_j})&=A \can(0,w')+Bt^\delta\can(e_{S_i},w'),\\
  \can(0,e_{\sigma S_j})\otimes \can(0,e_{\sigma S_i})&=A^{-1} \can(0,w')+B^{-1}t^{-\delta}\can(e_{S_i},w'),\\
  \can(0,e_{\sigma S_i})\otimes \can(0,w')&=C \can(0,w)+D t^{\delta-1}\can(e_{S_i},w),\\
\can(0,w')\otimes \can(0,e_{\sigma S_i})&=C^{-1} \can(0,w)+D^{-1}t^{1-\delta}\can(e_{S_i},w),\\
\can(0,e_{\sigma S_i})\otimes \can(e_{S_i},w')&=Et\can(e_{S_i},w),\\
\can(e_{S_i},w')\otimes \can(0,e_{\sigma S_i})&=E^{-1}t^{-1}\can(e_{S_i},w).
\end{align*}
The Proposition follows from direct calculation.

(iii) The statement is obvious.
\end{proof}

\begin{proof}[{Proof the Theorem \ref{thm:isom}}]
(i) We replace $\tOtimes$ by
$\otimes$ in $R$. The relations in Propositions \ref{prop:EK}
\ref{prop:EF} \ref{prop:KK} now become 
\begin{align}
  \can(0,e_{\sigma S_i})\otimes
  \can(v^{f_j},w^{f_j})&=t^{a_{ij}}\can(v^{f_j},w^{f_j})\otimes
  \can(0,e_{\sigma S_i})\\
\can(0,e_{\sigma S_i})\otimes
    \can(v^{\Sigma f_j},w^{f_j})&=t^{-a_{ji}}\can(v^{\Sigma f_j},w^{ f_j})\otimes
    \can(0,e_{\sigma  S_i})\\ 
  \can(0,e_{\sigma\Sigma S_i})\otimes
  \can(v^{f_j},w^{f_j})&=t^{-a_{ji}}\can(v^{f_j},w^{f_j})\otimes
  \can(0,e_{\sigma \Sigma S_i})\\
  \can(0,e_{\sigma \Sigma S_i})\otimes
  \can(v^{\Sigma f_j},w^{ f_j})&=t^{a_{ij}}\can(v^{\Sigma
    f_j},w^{f_j})\otimes
  \can(0,e_{\sigma \Sigma S_i}).
\end{align}
\begin{align}\label{eq:EF_reduce}
  [\can(0,e_{\sigma S_i}),\can(0,e_{\sigma\Sigma
    S_j})]=\delta_{ij}(t-t^{-1})(\can(v^{f_i},w^{f_i})-\can(v^{\Sigma f_i},w^{\Sigma
    f_i})).
\end{align}
\begin{align}
  \can(v^{f_i},w^{f_i})\otimes
  \can(v^{f_j},w^{f_j})&=\can(v^{f_i}+v^{f_j},w^{f_i}+w^{f_j})\\
  \can(v^{f_i},w^{f_i})\otimes
  \can(v^{\Sigma f_j},w^{f_j})&=\can(v^{f_i}+v^{\Sigma f_j},w^{f_i}+w^{f_j})\\
  \can(v^{\Sigma f_i},w^{ f_i})\otimes
  \can(v^{\Sigma f_j},w^{ f_j})&=\can(v^{\Sigma f_i}+v^{\Sigma f_j},w^{f_i}+w^{f_j}).
 \end{align}

Notice that the relations in Proposition \ref{prop:Serre_relation}
remain unchanged.

Comparing the above relations with those of the Chevalley generators of $\tUt(\mfr{g})$, we
can define a surjective algebra homomorphism $\phi$ from $\tUt(\mfr{g})_{\Q(t^\Hf)}$ to the Grothendieck ring $(K_{\Q(t^\Hf)}^*,\otimes)$, such that
\begin{align*}
\phi (t^\Hf)&=t^\Hf,\\
  \phi (E_i)&=\frac{-t}{t^2-1}\can(0,e_{\sigma S_i}),\\
\phi(K_i)&=\can(v^{\Sigma f_i},w^{f_i}),\\
\phi (K_i')&=\can(v^{f_i},w^{f_i}),\\
 \phi (F_i)&= \frac{t}{t^2-1}\can(0,e_{\sigma \Sigma S_i}).
\end{align*}
 This map is an isomorphism by Theorem \ref{thm:isom_coordinate_ring}
 and Theorem \ref{thm:triangular_decomposition}. We define $\kappa=\phi^{-1}$.

(ii) Notice that, for any $i\in I$, $\can(v^{f_i},w^{f_i})\otimes \can(v^{\Sigma
  f_i},w^{f_i})=\can(v^{f_i}+v^{\Sigma f_i}, 2 w^{f_i})$ is a center
element in $(R,\otimes)$. The statement follows from (i).
\end{proof}

\begin{proof}[{Proof of Theorem \ref{thm:Lusztig_basis}}] 
The claim follows from \cite{HernandezLeclerc11} by Proposition
\ref{prop:same_N}. 

\end{proof}

\section{Comparison of products}
\label{sec:comparison}
To conclude the paper, we show that the twisted product $\otimes$ of
the Grothendieck ring $R^+$ agrees with the
non-commutative multiplication $*$ defined in
\cite{HernandezLeclerc11}\cite{Hernandez02} via the reduction from
$(v,w)$ to $w-C_q v$. Notice that the twisted products do not agree in general, and, usually, such reduction is impossible because Cartan elements are not center elements, 
\cf Example \ref{eg:compare_product}.

Recall that the restriction of the twisted product $\otimes$
on $R^+$ is determine
by the bilinear form $\cN$:
\begin{align}
  \cN(m^1,m^2)=d(m^2,m^1)+d(m^1,m^2)+\Hf\langle \Phi(w^2),\Phi(w^1)\rangle_a,
\end{align}
for any $m^1=(v^1,w^1),m^2=(v^2,w^2)\in \N^{\Ind\Rep Q-\Inj Q}\times
W^S$, where we use $\Inj Q$ to denote the injectives in
$\Ind\Rep Q$. On the other hand, the non-commutative multiplication in \cite{HernandezLeclerc11} for the
corresponding Grothendieck ring is determined by the bilinear form
$\mathscr{N}$ defined on $W^+\times W^+$. For the rest of this
section, we will show these two products agree by proving Proposition \ref{prop:same_N}.

\cite[Remark 3.3]{HernandezLeclerc11} should imply Proposition
\ref{prop:same_N} which is the
main result of this section. We give an alternative approach to this result by considering the
``lift'' of $\tw\in W^+$ into $l$-dominant pairs.

For any $N\in\Ind\Rep(Q)$. Denote $[N]=\sum N_i S_i$ the class of
$N$ in $K_0(\Rep Q)$. For simplicity, we denote $\Hom_{\cD^b(Q)}(\ ,\ )$ by $\Hom(\ ,\ )$.

\subsection{$l$-dominant pairs}
For our purpose, we want to lift any $l$-dominant $(0,\tilde{w})$,
$\tilde{w}\in W^+$, to an $l$-dominant pair $(v,w)\in V^+\times W^S$,
whose existence is guaranteed by \cite{LP_rep_alg}.

Inspired by Corollary 3.15(iii) in \cite{LP_rep_alg}, we associate to any $N\in\Ind\Rep(Q)$ the pair $\iota(N)=(\iota_V(N),\iota_W(N))$ defined by
\begin{align}
  \begin{split}
    \iota_W(N)&=\sum_i(N_i\cdot e_{\sigma S_i}),\\
\iota_V(N)&=\sum_{x\in(\Ind\Rep(Q)-\Inj
      Q)}(\dim\Hom(\tau^{-1}x,[N])-\dim\Hom(\tau^{-1}x,N))\cdot e_x.
  \end{split}
\end{align}
In fact, we can rewrite $\iota_V(e_{\sigma
  N})=\sum_{x\in\Ind\Rep(Q)}(\dim\Hom(\tau^{-1}x,[N])-\dim\Hom(\tau^{-1}x,N))\cdot
e_x$ by taking $\tau$ as the functor defined for $\cD^b(Q)$.

\begin{Eg}
Let us take the example of Figure \ref{fig:A_2_rep_space}. Then we
have
\begin{align*}
  \iota(S_1)&=(0,e_{\sigma S_1})\\
  \iota(S_2)&=(0,e_{\sigma S_2})\\
  \iota(P_2)&=(e_{S_1},e_{\sigma S_1}+e_{\sigma S_2}).
\end{align*}
\end{Eg}

\begin{Prop}
The pair $\iota(N)$ is $l$-dominant and we have $\iota_W(N)-C_q
\iota_V(N)=e_{\sigma N}$.
\end{Prop}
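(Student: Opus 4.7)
The plan is to verify the stated identity $\iota_W(N) - C_q \iota_V(N) = e_{\sigma N}$ by evaluating both sides at each vertex $\sigma y$ of $\hat{I}$ (with $y \in \sigma\hat{I}$ identified via $M_?$ with an object of $\Ind \cD^b(Q)$); the $l$-dominance follows at once since $e_{\sigma N} \geq 0$.

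First I would extend the coefficient function $c_z(N) := \dim\Hom(\tau^{-1}z, [N]) - \dim\Hom(\tau^{-1}z, N)$ uniformly from $\Ind \Rep(Q) - \Inj Q$ to all of $\sigma\hat{I}$, observing that $c_z(N) = 0$ whenever $M_z \in \Inj Q$ (then $\tau^{-1} M_z = \Sigma P_i$) or $M_z \in \Sigma \Ind \Rep(Q)$, since in either case $\tau^{-1} M_z$ lies in $\Sigma \Rep(Q)$ and its $\Hom$ into $\Rep(Q)$ vanishes. With this extension, the formula $C_q e_z = e_{\sigma^{-1}z} + e_{\sigma z} + \sum_{j \neq i_z} a_{i_z j} e_{(j, a_z)}$ translates, through the covering $\pi : \Ind \cD^b(Q) \to \sigma\hat{I}$ and the mesh identification, into
\[
(C_q \iota_V(N))_{\sigma y} = c_{\tau y}(N) + c_y(N) - \sum_j c_{y_j}(N),
\]
where $\{y_j\}$ are the $\sigma\hat{I}$-coordinates of the indecomposable summands of the middle term $E_{M_y}$ of the Auslander--Reiten triangle $\tau M_y \to E_{M_y} \to M_y$ in $\cD^b(Q)$.

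Using a composition series of $N$ together with the long exact sequence for $\Hom(\tau^{-1}z, -)$ and the Auslander--Reiten duality $\dim\Ext^1(\tau^{-1}z, L) = \dim\Hom(L, z)$, each $c_z(N)$ rewrites as $\sum_i N_i \dim\Hom(S_i, M_z) - \dim\Hom(N, M_z)$. Setting $D(Z) := \sum_i N_i \dim\Hom(S_i, Z) - \dim\Hom(N, Z)$, the display above becomes
\[
(C_q \iota_V(N))_{\sigma y} = D(M_y) - D(E_{M_y}) + D(\tau M_y).
\]

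The heart of the argument is the almost-split identity: for every $X \in \Ind \cD^b(Q)$,
\[
\dim\Hom(X, M_y) - \dim\Hom(X, E_{M_y}) + \dim\Hom(X, \tau M_y) = \delta_{X, M_y}.
\]
I would prove this by applying $\Hom(X, -)$ to the AR triangle: when $X \not\cong M_y$, every morphism $X \to M_y$ factors through $E_{M_y}$ by the almost-split property, so the induced long exact sequence degenerates into a short exact sequence with vanishing alternating Hom sum; when $X \cong M_y$, the identity morphism fails to factor through $E_{M_y}$, contributing the single term $1$. Substituting $X = S_i$ and $X = N$ and summing with coefficients $N_i$ and $-1$ respectively yields $D(M_y) - D(E_{M_y}) + D(\tau M_y) = \sum_i N_i \delta_{M_y, S_i} - \delta_{y, N} = (\iota_W(N))_{\sigma y} - \delta_{y, N}$, which is precisely the claim. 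The main technical obstacle will be the case analysis on $M_y$ (projective, simple, lying in $\Sigma \Rep(Q)$, etc.), where one must verify that the relevant $c_z$'s vanish correctly when $\tau M_y$, $E_{M_y}$ or individual summands $M_{y_j}$ leave $\Ind \Rep(Q) - \Inj Q$; once these boundary cases are checked, the identity follows in full generality.
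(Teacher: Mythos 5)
Your overall strategy is the same as the paper's: verify $\iota_W(N)-C_q\iota_V(N)=e_{\sigma N}$ component by component, identifying the action of $C_q$ at a vertex with the mesh $\tau M_y\to E_{M_y}\to M_y$ and then using the almost split property to evaluate an alternating sum of Hom--dimensions; you merely work covariantly (first converting $c_z(N)$ into $D(M_z)=\dim\Hom([N],M_z)-\dim\Hom(N,M_z)$ by Auslander--Reiten/Serre duality and then applying $\Hom(X,-)$ to the triangle), whereas the paper keeps $c_z$ as Hom's out of $\tau^{-1}M_z$ and applies $\Hom(-,N)$ and $\Hom(-,S_i)$ to the rotated triangle $x\to\tau^{-1}E\to\tau^{-1}x$. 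On the vertices $\sigma y$ with $M_y\in\Ind\Rep(Q)$ your computation is correct and equivalent to the paper's proof.

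There is, however, a concrete flaw at the remaining vertices, those with $M_y\in\Sigma\Ind\Rep(Q)$, which your plan explicitly evaluates. Two of your intermediate claims fail exactly there. First, the rewriting $c_z(N)=\sum_i N_i\dim\Hom(S_i,M_z)-\dim\Hom(N,M_z)$ is valid for $M_z\in\Ind\Rep(Q)$ (for injectives both sides are $N_i-N_i=0$), but not for $M_z=\Sigma M$: the extended $c_z$ is $0$, while $D(\Sigma M)=D(M)$ need not vanish; e.g.\ for $Q\colon 2\to 1$, $N=P_2$, $M=S_1$ one gets $D(\Sigma S_1)=\dim\Ext^1(S_1,S_1)+\dim\Ext^1(S_2,S_1)-\dim\Ext^1(P_2,S_1)=1$. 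Second, your ``heart of the argument'' is false as stated for every $X\in\Ind\cD^b(Q)$: the long exact sequence has a left end $\Hom(X,\Sigma^{-1}M_y)\to\Hom(X,\tau M_y)$ which your degeneration argument silently assumes to be zero; the correct identity is $\dim\Hom(X,M_y)-\dim\Hom(X,E_{M_y})+\dim\Hom(X,\tau M_y)=\delta_{X,M_y}+\delta_{X,\Sigma^{-1}M_y}$. In the same example, for the triangle $P_2\to S_2\to\Sigma S_1$ and $X=S_1$ the sum equals $1$, not $\delta_{S_1,\Sigma S_1}=0$. These two errors cancel, so your final numbers are right, but the argument as written is unsound at the shifted vertices, and the boundary check you flag (that the $c_z$'s vanish) is not the real issue --- they do vanish. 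The repair is what the paper in effect does: use the rewriting and the identity only with module arguments, where $\Hom(X,\Sigma^{-1}M_y)=0$ (the hypothesis the paper makes explicit as $\Hom(x,\Sigma^{-1}N)=0$), and dispose of the vertices $M_y\in\Sigma\Ind\Rep(Q)$ directly: every coefficient of $\iota_V$, $\iota_W$ occurring in those meshes is zero --- for $M_y=\Sigma P_i$ because the module summands of the middle term are summands of $I_i/\opname{soc}I_i$, which are injective since $\C Q$ is hereditary --- so those components are trivially $0=0$.
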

\begin{proof}
The claim should be a translation of the result of
\cite{LP_rep_alg} from repetitive algebras to representations of
$Q$. We give a straightforward proof here.

To simplify the notations, let us denote the pair $\iota(N)$ by $\iota=(\iota_V,\iota_W)$.
For any $x\in\Ind\Rep Q$, denote the AR-triangle in $\cD^b(Q)$ by $\tau
x\ra E\ra x$, where $E=\oplus_j E_j$, with each $E_j$ an
indecomposable in $\cD^b(Q)$. Then the $\sigma x$-component of $\iota_W-C_q \iota_V$ is given by
\begin{align*}
  (\iota_W-C_q \iota_V)_{\sigma x}&= (\iota_W)_{\sigma
    x}-(\iota_V)_{\tau x}-(\iota_V)_{x}+ \sum_j(\iota_V)_{E_j}.
\end{align*}
It suffices to verify the following equality:
\begin{align*}
  (\iota_W-C_q \iota_V)_{\sigma x}=\delta_{x,N}.
\end{align*}

We have
\begin{align*}
(\iota_W)_{\sigma
    x}&=\sum_i\delta_{x,S_i}N_i,\\
  (\iota_V)_{\tau x}&=\dim\Hom(x,[N])-\dim\Hom(x,N),\\
  (\iota_V)_{ x}&=\dim\Hom(\tau^{-1}x,[N])-\dim\Hom(\tau^{-1}x,N),\\
\sum_j(\iota_V)_{E_j}&=\dim\Hom(\tau^{-1}E,[N])-\sum_j\dim\Hom(\tau^{-1}E,N).
\end{align*}
Applying the contravariant functor $\Hom(\ ,N)$ to the AR-triangle $x\ra \tau^{-1}E\ra
\tau^{-1}x\ra \Sigma x$, we obtain a long exact sequence
\begin{align*}
  \Hom(x,\Sigma^{-1}N)&\xra{w^1}\Hom(\tau^{-1}x,N)\ra
  \Hom(\tau^{-1}E,N)\\
&\xra{w^3} \Hom(x,N)\xra{w^2}\Hom(\tau^{-1}x,\Sigma N).
\end{align*}
Notice that $\Hom(x,\Sigma^{-1}N)=0$ and consequently $w^1=0$. By using the
universal property of AR-triangles, we see $w^3$ is surjective if
$N\neq x$ and $\dim\Cok w^3=1$ if $N=x$. Therefore we obtain
$$\dim\Hom(\tau^{-1}x,N)-
  \dim\Hom(\tau^{-1}E,N)+\dim \Hom(x,N)=\delta_{x,N}.$$

By applying the functors $\Hom(\ ,S_i)$ for all $i\in I$ to this AR-triangle, we obtain
$$\dim\Hom(\tau^{-1}x,[N])-
  \dim\Hom(\tau^{-1}E,[N])+\dim
  \Hom(x,[N])=\sum_i\delta_{x,S_i}N_i.$$

Putting these results together, we obtain the desired equality.
\end{proof}

\subsection{Comparison of bilinear forms}

For any $M,N\in \Ind\Rep(Q)$, recall that the Euler form $\langle
M,N\rangle=\dim\Hom(M,N)-\dim\Hom(M,\Sigma N)$ only depends on the
class $[M]$, $[N]$. The symmetrized Euler form is given by $(M,N)=\langle
M,N\rangle +\langle N,M\rangle$.

\begin{Def}[$q$-degree order]
  For any $(i,a),(j,b)\in \Ind\Rep(Q)$, we can write $a=q^{\xi(i)+A}$,
  $b=q^{\xi(j)+B}$, for some $0\leq A,B\leq 2h$ such that
  $\xi(i)+A-\xi(j)-B<h$. If $\xi(i)+A>\xi(j)+B$, we say the $q$-degree
  of $(i,a)$ is higher (or larger) than that of $(j,b)$ and the
  $q$-degree of $(j,b)$ is lower (or smaller) than that of $(i,a)$.
\end{Def}

\begin{Eg}
  In Figure \ref{fig:A_2_rep_space}, the $q$-degree of $P_2$ is higher than that of $S_1$.
\end{Eg}

\begin{Prop}\label{prop:same_form}
  For any different objects $M,N\in \Ind\Rep(Q)$. Assume that the $q$-degree of $M$ is
  not higher than that of $N$, then we have
  \begin{align}\label{eq:same_form}
d(\iota(N),\iota( M))-d(\iota(M),\iota(N))+\Hf\langle N,M\rangle_a=\Hf(M,N).
  \end{align}
\end{Prop}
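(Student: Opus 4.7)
The plan is to compute $d(\iota(N),\iota(M))$ explicitly and reduce \eqref{eq:same_form} to a vanishing statement. Substituting the identity $\iota_W(N)-C_q\iota_V(N)=e_{\sigma N}$ established just above, the first summand of $d$ becomes $e_{\sigma N}\cdot \sigma^{*}\iota_V(M)$; using $\sigma^{2}=\tau$ this is the value $\iota_V(M)(\tau N)$, which by Lemma~\ref{lem:wrap} and the definition of $\iota_V$ equals $\dim\Hom(N,[M])-\dim\Hom(N,M)$. The second summand $\iota_V(N)\cdot \sigma^{*}\iota_W(M)=\sum_i M_i\,\iota_V(N)(S_i)$ is expanded via the Auslander--Reiten--Serre identity $\dim\Hom(\tau^{-1}S_i,X)=\dim\Ext^{1}(X,S_i)$ and then $\dim\Ext^{1}=\dim\Hom-\langle\cdot,\cdot\rangle$; the Euler-form contributions cancel and the simple-Hom pieces telescope, collapsing the summand to $\sum_i M_iN_i-\dim\Hom(N,[M])$. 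Adding the two contributions, the $\dim\Hom(N,[M])$ terms cancel and I obtain the clean formula
\[
d(\iota(N),\iota(M))=\sum_i M_iN_i-\dim\Hom(N,M).
\]

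Since $\sum_i M_iN_i$ is symmetric in $M$ and $N$, swapping $M\leftrightarrow N$ gives
\[
d(\iota(N),\iota(M))-d(\iota(M),\iota(N))=\dim\Hom(M,N)-\dim\Hom(N,M).
\]
Unwinding $(M,N)=\langle M,N\rangle+\langle N,M\rangle$ and $\langle N,M\rangle_a=\langle N,M\rangle-\langle M,N\rangle$, the right-hand side of \eqref{eq:same_form} simplifies to $\langle M,N\rangle=\dim\Hom(M,N)-\dim\Ext^{1}(M,N)$. Thus the proposition is equivalent to the identity
\[
\dim\Hom(N,M)=\dim\Ext^{1}(M,N),
\]
which I will prove by showing both sides vanish under the hypothesis.

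The vanishing will use the standard AR-path criterion for Dynkin type: a nonzero morphism $X\to Y$ in $\cD^b(Q)$ between indecomposables forces the existence of a directed path from $X$ to $Y$ in the Auslander--Reiten quiver, and hence $q\text{-deg}(X)\le q\text{-deg}(Y)$ with equality only if $X\cong Y$. Applied to $N\to M$ together with $M\ne N$ and the hypothesis $q\text{-deg}(M)\le q\text{-deg}(N)$, this forces $\dim\Hom(N,M)=0$. Serre duality gives $\dim\Ext^{1}(M,N)=\dim\Hom(N,\tau M)$; when $M$ is non-projective, the key identity $q\text{-deg}(\tau M)=q\text{-deg}(M)-2<q\text{-deg}(N)$ lets the same argument force $\dim\Hom(N,\tau M)=0$, while the projective case is immediate from $\Ext^{1}(M,-)=0$.

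The main obstacle will be the last step: in the cyclic setup the $q$-degree is a priori only well-defined modulo $2h$, and one must carefully verify that its restriction to $\Ind\Rep(Q)$ via the section $M_{?}$ gives a genuine linear ordering compatible with AR-paths in $\cD^{b}(Q)$ without wrapping. The computations in the first two paragraphs are essentially Euler-form bookkeeping, but they do depend on extending $\iota_V$ through $\tau$-periodicity via Lemma~\ref{lem:wrap}.
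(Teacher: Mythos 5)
Your argument is correct and follows essentially the same route as the paper's: expand $d$ via its definition together with $\iota_W(N)-C_q\iota_V(N)=e_{\sigma N}$, simplify with Serre duality and Euler-form bookkeeping, and then use the $q$-degree hypothesis to kill both $\dim\Hom(N,M)$ and $\dim\Ext^1(M,N)\ (=\dim\Hom(N,\tau M))$, which is exactly what the paper's two final simplifications amount to. Your intermediate closed formula $d(\iota(N),\iota(M))=\sum_i M_iN_i-\dim\Hom(N,M)$ and the explicit reduction to $\dim\Hom(N,M)=\dim\Ext^1(M,N)$ are just a cleaner repackaging of the paper's term-by-term expansion (including the check, which the paper does explicitly, that the expressions for $\iota_V$ remain valid at projectives/injectives), so the two proofs coincide in substance.
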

\begin{proof}

By definition, we have
\begin{align*}
  \Hf(M,N)-\Hf\langle N,M\rangle_a&=\langle M,N\rangle,\\
d(\iota(N),\iota(M))&=e_{\sigma N}\cdot \sigma^*
\iota_V(M)+\iota_V(N)\cdot \sigma^* \iota_W(M)\\
&= e_{\tau N}\cdot \iota_V(M)\cdot+\iota_V(N)\cdot \sum M_i e_{\sigma S_i},\\
d(\iota(M),\iota(N))&=e_{\tau M}\cdot \iota_V(N)+\iota_V(M)\cdot \sum
N_i e_{\sigma S_i}.
\end{align*}
So we should check
\begin{align}\label{eq:verify_form}
  e_{\tau N}\cdot \iota_V(M)+\iota_V(N)\cdot \sum M_i e_{\sigma
    S_i}-e_{\tau M}\cdot\iota_V(N)-\iota_V(M)\cdot\sum N_i e_{\sigma S_i}=\langle M,N\rangle
\end{align}
First assume that $N$ and $M$ are not projective. By using the definition of $\iota_V$, we have
\begin{align*}
\iota_V(M)\cdot e_{\tau N}&=\dim\Hom(N,[M])-\dim\Hom(N,M),\\
\iota_V(N)\cdot
\sum M_i e_{\sigma S_i}&=\dim\Hom(\tau^{-1}[M],[N])-\dim\Hom(\tau^{-1}[M],N)\\
&=\dim\Hom([N],\Sigma[M])-\dim\Hom(N,\Sigma[M]),\\
\iota_V(N)\cdot    e_{\tau M}&=\dim\Hom(M,[N])-\dim\Hom(M,N),\\
\iota_V(M)\cdot
\sum N_ie_{\sigma S_i}&=\dim\Hom(\tau^{-1}[N],[M])-\dim\Hom(\tau^{-1}[N],M)\\
&=\dim\Hom([M],\Sigma[N])-\dim\Hom(M,\Sigma[N]).
\end{align*}
If $N$ is projective, we have $e_{\tau N}\cdot\iota_V(M)=0$. On
the other hand, $\dim \Hom(N,[M])-\dim\Hom(N,M)$ vanishes. So the above
expression of $e_{\tau N}\cdot\iota_V(M)$ remains
effective. Similarly, the above expression of
$e_{\tau M}\cdot\iota_V(N)$ remains effective even if $M$ is
projective. So we can remove the projectivity assumption on $M$ and $N$.

Because the $q$-degree of $M$ is no larger than that of $N$, we have
$\Hom(N,M)=0$. The left hand side of \eqref{eq:verify_form} becomes
\begin{align*}
(
&\dim\Hom(N,[M])-\dim\Hom(N,\Sigma[M]))\\
&-(\dim\Hom(M,[N])-\dim\Hom(M,\Sigma[N]))\\
&+\dim\Hom([N],\Sigma[M])-\dim([M],\Sigma[N])+\dim\Hom(M,N)\\
&\qquad =\langle N,[M]\rangle-\langle M,[N]\rangle +\dim
\Hom([N],\Sigma[M])-\dim([M],\Sigma[N])\\&\qquad \qquad+\dim\Hom(M,N).
\end{align*}

We can replace $N$, $M$ by $[N]$ and $[M]$ respectively in the last
expression. Then, by using definition of $\langle\ ,\ \rangle$, the
last expression becomes
$$
\dim\Hom([N],[M])-\dim\Hom([M],[N])+\dim\Hom(M,N)=\dim\Hom(M,N).
$$

As the last step, $\dim\Hom(M,N)=\langle M,N\rangle$ because the $q$-degree of $M$ is no
larger than that of $N$.
\end{proof}

\begin{Prop}\label{prop:same_N}
  For any dominant pairs $m^1=(v^1,w^1)$, $m^2=(v^2,w^2)$ in
  $\N^{\Ind\Rep(Q)-\Inj Q}\times W^S$, we have
  \begin{align}
    \cN(m^1,m^2)=\Hf\mathscr{N}(w-C_qv^1,w-C_q v^2)\label{eq:same_N},
  \end{align}
where the form $\mathscr{N}$ defined on $W^+\times W^+$ is the bilinear form in \cite[(5)]{HernandezLeclerc11}.
\end{Prop}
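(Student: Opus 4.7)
The plan is to reduce \eqref{eq:same_N} to the atomic case established in Proposition \ref{prop:same_form}, exploiting bilinearity together with a decomposition of $l$-dominant pairs into sums of the atomic lifts $\iota(N)$. To begin, I would verify that both sides of \eqref{eq:same_N} are bilinear in $(m^1,m^2)$: the form $d(\cdot,\cdot)$ is bilinear by \eqref{eq:dimension} and $\Phi$ is linear in $w$, so $\cN$ is bilinear; on the other side, $\mathscr{N}$ is bilinear on $W^+\times W^+$ by its definition in \cite[(5)]{HernandezLeclerc11}, and the reduction map $(v,w)\mapsto w-C_q v$ is linear, so the composition is bilinear on pairs.

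Next, I would show that every $l$-dominant pair $m=(v,w)\in\N^{\Ind\Rep Q-\Inj Q}\times W^S$ decomposes into atomic lifts: if $w-C_q v=\sum_k e_{\sigma N_k}$ (with possible repetitions), then $m=\sum_k\iota(N_k)$. Indeed, the identity $\iota_W(N_k)-C_q\iota_V(N_k)=e_{\sigma N_k}$ from the preceding proposition ensures that $\sum_k\iota(N_k)$ lies in $V^+\times W^S$ and is an $l$-dominant pair whose image under $(v,w)\mapsto w-C_q v$ equals $\sum_k e_{\sigma N_k}$. Since $m$ itself is an $l$-dominant lift in $V^+\times W^S$ of the same element of $W^+$, Theorem \ref{thm:abelian_representative} forces $m=\sum_k\iota(N_k)$ by uniqueness.

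By bilinearity and this decomposition, \eqref{eq:same_N} reduces to the atomic identity $\cN(\iota(N),\iota(M))=\Hf\,\mathscr{N}(e_{\sigma N},e_{\sigma M})$ for $N,M\in\Ind\Rep Q$. For $N\neq M$, after possibly swapping $N$ and $M$, we may assume the $q$-degree of $M$ is not higher than that of $N$; Proposition \ref{prop:same_form} then expresses the antisymmetric combination of $d$-terms and $\langle\cdot,\cdot\rangle_a$ appearing in $\cN(\iota(N),\iota(M))$ in terms of the symmetrized Euler form $(M,N)$. Matching this against the explicit formula for $\mathscr{N}$ in \cite[(5)]{HernandezLeclerc11}, which is itself built from Euler forms, completes the atomic identity in this case. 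The diagonal $N=M$ is treated by direct computation, since $\Hf\langle N,N\rangle_a=0$ and $d(\iota(N),\iota(N))$ can be read off immediately from the definitions of $d$ and $\iota$.

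The main obstacle will be the bookkeeping of sign and twist conventions: one must carefully unpack the formula for $\mathscr{N}$ in \cite[(5)]{HernandezLeclerc11}, track how its Euler-form twist corresponds to the combination of $d$-terms entering $\cN$, and confirm that signs align consistently between the two definitions. Once this matching of conventions is accomplished, the atomic identity is essentially a restatement of Proposition \ref{prop:same_form}, and the full identity \eqref{eq:same_N} then follows for arbitrary $l$-dominant pairs by bilinearity and the decomposition established above.
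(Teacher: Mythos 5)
Your proposal is correct and follows essentially the same route as the paper: reduce to the atomic identity of Proposition \ref{prop:same_form}, whose right-hand side is identified with $\Hf\mathscr{N}(e_{\sigma M},e_{\sigma N})$ by \cite[Proposition 3.2]{HernandezLeclerc11}, and then extend to arbitrary $l$-dominant pairs using bilinearity and the antisymmetry of $\cN$ and $\mathscr{N}$ (which also disposes of the diagonal and of the $q$-degree ordering). The paper leaves the decomposition $m=\sum_k\iota(N_k)$ implicit, whereas you spell it out via the uniqueness of $l$-dominant lifts; just note that Theorem \ref{thm:abelian_representative} is stated for generic $q$, so its use in the cyclic setting should be routed through Proposition \ref{prop:local_change} as the paper does elsewhere.
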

\begin{proof}
  By Proposition 3.2 of \cite{HernandezLeclerc11}, the right hand side
  of \eqref{eq:same_form} is just $\Hf\mathscr{N}(e_{\sigma M},e_{\sigma
    N})$. Therefore, we have $\cN=\Hf\mathscr{N}$ in the situation of
  Proposition \ref{prop:same_form}. Then the claim holds true in
  general because $\cN$ and $\mathscr{N}$ are anti-symmetrized bilinear forms.
\end{proof}

\section*{Acknowledgements}
\label{sec:ack}

The author is indebted to Bernhard Keller for inviting him to Paris
where most part of this paper was done. He is grateful to Yoshiyuki
Kimura and Hiraku Nakajima for various discussions. He thanks David
Hernandez, Bernard Leclerc, You Qi, Ben Webster, and Mikhail Gorksy
for comments. He is grateful to the referee for many suggestions and remarks.



\def\cprime{$'$}
\providecommand{\bysame}{\leavevmode\hbox to3em{\hrulefill}\thinspace}
\providecommand{\MR}{\relax\ifhmode\unskip\space\fi MR }
\providecommand{\MRhref}[2]{%
  \href{http://www.ams.org/mathscinet-getitem?mr=#1}{#2}
}
\providecommand{\href}[2]{#2}

\end{document}